\newcommand{\bR}{\mathbb{R}}
\newcommand{\norm}[1]{\left\Vert #1\right\Vert}
\newcommand{\calM}{\mathcal{M}}
\newcommand{\calT}{\mathcal{T}}
\newcommand{\calP}{\mathcal{P}}
\newcommand{\calG}{\mathcal{G}}
\newlength{\leftstackrelawd}
\newlength{\leftstackrelbwd}
\def\leftstackrel#1#2{\settowidth{\leftstackrelawd}%
	{${{}^{#1}}$}\settowidth{\leftstackrelbwd}{$#2$}%
	\addtolength{\leftstackrelawd}{-\leftstackrelbwd}%
	\leavevmode\ifthenelse{\lengthtest{\leftstackrelawd>0pt}}%
	{\kern-.5\leftstackrelawd}{}\mathrel{\mathop{#2}\limits^{#1}}}
\numberwithin{equation}{section}
\newtheorem{theorem}{Theorem}[section]
\newtheorem{lemma}[theorem]{Lemma}
\newtheorem{assumption}[theorem]{Assumption}
\title[the Gross-Pitaevskii eigenvalue problem]{On the convergence of Sobolev gradient flow for the Gross-Pitaevskii eigenvalue problem}
\author{Ziang Chen}
\address{(ZC) Department of Mathematics, Massachusetts Institute of Technology, 77 Massachusetts Avenue, Cambridge, MA 02139.}
\email{ziang@mit.edu}
\author{Jianfeng Lu}
\address{(JL) Departments of Mathematics, Physics, and Chemistry, Duke University, Box 90320, Durham, NC 27708.}
\email{jianfeng@math.duke.edu}
\author{Yulong Lu}
\address{(YL) School of Mathematics, University of Minnesota, 206 Church Street SE, Minneapolis, MN 55455.}
\email{yulonglu@umn.edu}
\author{Xiangxiong Zhang}
\address{(XZ) Department of Mathematics, Purdue University, 150 N. University Street, West Lafayette, IN 47907.}
\email{zhan1966@purdue.edu}
\date{\today}
\begin{document}
	\begin{abstract}
		We study the convergences of three projected Sobolev gradient flows to the ground state of the Gross-Pitaevskii eigenvalue problem. They are constructed as the gradient flows of the Gross-Pitaevskii energy functional with respect to the $H^1_0$-metric and two other equivalent metrics on $H_0^1$, including the iterate-independent $a_0$-metric and the iterate-dependent $a_u$-metric. We first prove the energy dissipation property and the global convergence to a critical point of the Gross-Pitaevskii energy for the discrete-time $H^1$ and $a_0$-gradient flow. We also prove local exponential convergence of all three schemes to the ground state. 
	\end{abstract}
	
	\maketitle
	
	\section{Introduction}
	
	This paper concerns the following nonlinear Schr\"odinger eigenvalue problem, also known as the Gross-Pitaevskii \cites{gardiner1997particle, atre2006class, dast2013eigenvalue, rogel2013gross} eigenvalue problem:
	\begin{equation}\label{eigen_problem}
		\begin{cases}
			-\Delta u + V u + \beta |u|^2 u = \lambda u, & \text{in }\Omega,\\
			u = 0, &\text{on }\partial\Omega,
		\end{cases}
	\end{equation}
	where $\Omega$ is a bounded Lipschitz domain on $\bR^d$ ($d = 1,2,3$), $V:\Omega\rightarrow\bR$ is a non-negative potential energy, and $\beta\geq 0$.  The Gross-Pitaevskii eigenvalue problem has been widely used in the quantum physics community to represent the Bose-Einstein condensation \cites{bose1924plancks,einstein1925quantentheorie,dalfovo1999theory,pitaevskii2003bose}. The wavefunction associated with a stationary state of the system can be described by an eigenfunction $u$ to \eqref{eigen_problem}, with the eigenvalue $\lambda$ being the chemical potential. 
	
	Many numerical methods have been proposed to compute the ground state of the problem \eqref{eigen_problem}, i.e., the $L^2$-normalized eigenfunction corresponding to the smallest eigenvalue. Among them, one of the most popular classes of methods is the discrete normalized gradient flow \cite{bao2004computing} which applies the backward Euler time-discretization for the continuous $L^2$-gradient flow of the constrained energy. Several alternative gradient flows have been designed based on the idea of tuning the geometry of the gradient flows, including the projected Sobolev gradient flow \cites{danaila2010new, kazemi2010minimizing, danaila2017computation, zhang2019exponential, henning2020sobolev, heid2021gradient} and the J-method \cites{jarlebring2014inverse, altmann2021j}. Apart from the gradient-flow-based methods, self-consistent field iteration \cites{cances2000scf,cances2000convergence,upadhyaya2018density} is another class of methods that solves the nonlinear eigenvalue problem by iteratively solving a series of linearized eigenvalue problems. There have been other works investigating numerical methods for the Gross-Pitaevskii equation, such as the analysis of finite-dimensional approximation \cite{zhou2003analysis}, error estimate \cite{cances2010numerical}, two-grid method \cite{chien2006two}, and multigrid method \cites{xie2016multigrid, zhang2019efficient}, just to name a few. Finally, let us also mention some analytical studies of the Gross-Pitaevskii equation/eigenvalue problem, e.g., the formal analytical solutions \cite{trallero2008formal}, the stability \cite{jackson2005stability}, and the posteriori analysis \cite{dusson2017posteriori}.
	
	In this work, we focus on the projected Sobolev gradient flow approach to computing the ground state of \eqref{eigen_problem}. More specifically, the ground state of \eqref{eigen_problem} is characterized by the minimizer of the following variational problem:
	\begin{equation}\label{min_energy_problem}
		\min_{u\in H_0^1(\Omega)} E(u) := \int_\Omega \frac{1}{2}|\nabla u|^2 +\frac{1}{2} V|u|^2 +\frac{\beta}{4} |u|^4, \quad\text{s.t. } \norm{u}_{L^2(\Omega)} = 1.
	\end{equation}
	Motivated by Riemannian optimization, i.e., optimization subject to a Riemannian manifold constraint, finding the ground state of \eqref{eigen_problem} is equivalent to minimizing the energy functional $E(u)$ over an infinite dimensional Hilbert manifold $\calM$  in $H_0^1(\Omega)$ defined by 
	\begin{equation*}
		\calM = \left\{u\in H_0^1(\Omega):\norm{u}_{L^2(\Omega)} = 1\right\}.
	\end{equation*}
	The projected Sobolev gradient flow for solving \eqref{eigen_problem}  is defined as
	\begin{equation}\label{eq:psgf}
		u^\prime(t) = - \nabla_X^{\mathcal{R}} E(u(t)) = -\mathcal{P}_{\calT_u \calM ,X} (\nabla_{X} E(u(t))),
	\end{equation}
	where $\nabla_X^{\mathcal{R}} E(u(t))$ is the Riemannian gradient of $E$ associated with the inner product $(\cdot,\cdot)_X$ and the manifold $\calM$, i.e., the projection of the gradient $\nabla_{X} E(u(t))$ onto the tangent space $\calT_u \calM$. We are mainly interested in three projected Sobolev gradient flows that correspond to \eqref{eq:psgf} with different choices of inner products $X$: (i) the projected $H^1$-gradient flow \cite{kazemi2010minimizing} where  $(z,w)_{X} = (z,w)_{H_0^1(\Omega)} = (\nabla z, \nabla w)_{L^2(\Omega)}$; (ii) the projected $a_0$-gradient flow \cite{danaila2010new} where $(z,w)_{X} = (z,w)_{a_0(\Omega)} = \int_{\Omega} \nabla z\cdot \nabla w + V z w$; and (iii) the projected $a_u$-gradient flow \cite{henning2020sobolev} where
	$$(z,w)_{X} = (z,w)_{a_u(\Omega)} = \int_{\Omega} \nabla z\cdot \nabla w + V z w + \beta |u|^2 z w.$$
	The primary goal of the paper is to prove the convergence property of the time-discretization of the three projected Sobolev gradient flows above.

	\subsection*{Prior work and our contribution}
	The work \cite{kazemi2010minimizing} established the global exponential convergence of the continuous projected $H^1$-gradient flow to a critical point of $E$. However, the convergence analysis of its discrete version remained largely open. In fact, as stated in \cite{henning2020sobolev}, even the energy decay and the convergence to critical points was unclear for the discrete projected $H^1$-gradient flow. In addition, the convergence of the discrete projected $a_0$-gradient flow has not been proved in previous literature, though conjectured to be true from numerical studies. One major contribution of this work is to prove the energy decay as well as the global convergence to a critical point for the discrete projected $H^1$ and $a_0$-gradient flow. Moreover, for the same schemes, we also obtained the local exponential convergence rate. 
	
	As for the projected $a_u$-gradient flow, the work \cite{henning2020sobolev} obtained a global exponential convergence of the continuous flow to the ground state and also proved the global convergence (without a rate) of its forward Euler discretization. In the recent work \cite{zhang2019exponential}, the author obtained a local exponential convergence of the discrete projected $a_u$-gradient flow under the assumption that the discrete iterates are uniformly bounded in $L^\infty(\Omega)$. However, such an assumption is very difficult to verify in practice. The second contribution of the current paper is to prove the local exponential convergence of the discrete projected $a_u$-gradient flow without such an assumption.
	
	Let us also mention several previous works on minimizing related energy functionals with more general spherical constraints. The local convergence for Hartree-Fock and Kohn-Sham functionals over Grassmann manifold was established in \cite{schneider2009direct}. In \cite{altmann2022energy}, the authors generalized the approach in \cite{zhang2019exponential} to show a convergence result for Kohn-Sham functional over Stiefel manifold.

	\subsection*{Organization} The rest of this paper will be organized as follows. In Section \ref{sec:gf}, we review three discrete projected Sobolev gradient flows. In Section  \ref{sec:main} we state the main convergence results on the three gradient flows. Sections \ref{sec:proofglobal}-\ref{sec:prooflocal} are devoted to the proofs of the main convergence results. The appendix contains some useful lemmas and auxiliary proofs of the main results.
	
	\section{Projected Sobolev Gradient Flow}\label{sec:gf}
	This section reviews the projected Sobolev gradient descent for solving \eqref{eigen_problem} or equivalently \eqref{min_energy_problem}, following \cites{zhang2019exponential, henning2020sobolev}; we refer to \cites{zhang2019exponential, henning2020sobolev} for more details. We remark that discrete flows/schemes considered in this paper are discrete only in time and there is no spatial discretization in the scheme. The spatial discretization of the discrete projected Sobolev gradient flow was given in \cite{henning2020sobolev} and references therein.
	
	We would assume that $V\in L^\infty(\Omega)$ and without loss of generality, we further assume that $0\leq V\leq V_{\max}< \infty$. Notice that since we consider the dimension $d \in \{1,2,3\}$, the embedding $H_0^1(\Omega)\subset L^4(\Omega)$ holds and the energy $E(u)$ defined in \eqref{min_energy_problem} is finite for any $u\in H_0^1(\Omega)$. The tangent space at the base $u\in \calM$ is given by  
	\begin{equation*}
		\calT_u\calM = \left\{\xi\in H_0^1(\Omega):(u,\xi)_{L^2(\Omega)} = 0\right\}.
	\end{equation*}
	Recall that the problem \eqref{min_energy_problem} can be viewed as an optimization problem on the manifold $\calM$, for which it is natural to consider the projected gradient method, i.e., update against the Riemannian gradient direction in the tangent space and then retract the iterate back to the manifold. The retraction map is clear in this setting:
	\begin{equation*}
		R(u) = \frac{u}{\norm{u}_{L^2(\Omega)}}\in\calM,\quad \forall~u\in H_0^1(\Omega)\backslash\{0\}.
	\end{equation*}
	However, the Riemannian gradient would depend on the inner product we equip at $u\in H_0^1(\Omega)$, which leads to different schemes, including $H^1$-scheme \cite{kazemi2010minimizing}, $a_0$-scheme \cite{danaila2010new}, and $a_u$-scheme \cite{henning2020sobolev}, that are described in the following subsections.
	
	\subsection{$H^1$-scheme} Let $H_0^1(\Omega)$ be equipped with the inner product $(u,v)_{H_0^1(\Omega)}:= (\nabla u,\nabla v)_{L^2(\Omega)}$. For any $w\in H_0^1(\Omega)$, by Riesz representation theorem, there exists a unique $\calG_{H^1}w\in H_0^1(\Omega)$ such that
	\begin{equation*}
		\left(z, \calG_{H^1} w\right)_{H_0^1(\Omega)}  = (z,w)_{L^2(\Omega)},\quad\forall~z\in H_0^1(\Omega).
	\end{equation*}
	$\calG_{H^1}: H_0^1(\Omega)\rightarrow H_0^1(\Omega)$ is named as the Green's operator. In other words, $u = \calG_{H^1}w$ is the unique solution to $ - \Delta u = w$ in $\Omega$ with the boundary condition $u=0$ on $\partial\Omega$.
	
	For any $u \in \calM$ and $h \in \calT_u \calM$, it holds that
	\begin{align*}
		( \nabla_{H^1} E(u), h)_{H_0^1(\Omega)}  & = \lim_{t\rightarrow 0} \frac{ E(u + th) - E(u)}{t}\\
		& = \lim_{t\rightarrow 0} \frac{1}{t} \bigg(\int_\Omega \left(\frac{1}{2}|\nabla u + t\nabla h|^2 + \frac{1}{2} V | u+th |^2 +\frac{\beta}{4} | u+th |^4\right) \\
		&\qquad \qquad\qquad - \int_\Omega \left(\frac{1}{2} |\nabla u|^2 + \frac{1}{2} V |u|^2 + \frac{\beta}{4} |u|^4\right)\bigg)\\
		&= (\nabla u,\nabla h)_{L^2(\Omega)} + (Vu + \beta |u|^2 u, h)_{L^2(\Omega)}\\
		& = \left(u + \calG_{H^1}( Vu + \beta |u|^2 u ), h\right)_{H_0^1(\Omega)},
	\end{align*}
	which implies that the $H^1$-gradient of the energy $E(u)$ can be evaluated as
	\begin{equation}\label{nabla_E_H1}
		\nabla_{H^1} E(u) = u + \calG_{H^1}( Vu + \beta |u|^2 u ).
	\end{equation}
	The lemma below computes the projection of any $\xi\in \calM$ on the tangent space $\calT_u \calM$.

	\begin{lemma}\label{lem:proj_H1}
		Given $u\in \calM$ and $\xi\in H_0^1(\Omega)$, the projection of $\xi$ onto $\calT_u\calM$, with respect to the $H_0^1$-inner product, is
		\begin{equation}\label{proj_H1}
			\calP_{\calT_u\calM, H^1}(\xi) = \xi - \frac{(\xi, u)_{L^2(\Omega)}}{\norm{\calG_{H^1} u}_{H_0^1(\Omega)}^2} \calG_{H^1} u.
		\end{equation}
	\end{lemma}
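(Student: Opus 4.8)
The plan is to recognize the $H_0^1$-orthogonal projection onto $\calT_u\calM$ as the orthogonal projection onto the complement of a single, explicitly identified vector, after which \eqref{proj_H1} follows from the textbook one-line formula for projection onto a hyperplane. First I would record that $\calT_u\calM$ is the kernel of the linear functional $\xi\mapsto(\xi,u)_{L^2(\Omega)}$, which is continuous on $H_0^1(\Omega)$ by the Cauchy--Schwarz inequality combined with the Poincaré embedding $H_0^1(\Omega)\hookrightarrow L^2(\Omega)$. Hence $\calT_u\calM$ is a closed subspace of the Hilbert space $\big(H_0^1(\Omega),(\cdot,\cdot)_{H_0^1(\Omega)}\big)$, and the $H_0^1$-orthogonal projection onto it is well defined.

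The key step is to convert the defining $L^2$-constraint into an $H_0^1$-orthogonality relation using the Green's operator. By the defining property of $\calG_{H^1}$, for every $\xi\in H_0^1(\Omega)$ one has
\begin{equation*}
(\xi,u)_{L^2(\Omega)} = (\xi, \calG_{H^1}u)_{H_0^1(\Omega)}.
\end{equation*}
Therefore the constraint $(\xi,u)_{L^2(\Omega)}=0$ is \emph{equivalent} to $H_0^1$-orthogonality to $\calG_{H^1}u$, so that
\begin{equation*}
\calT_u\calM = \big(\mathrm{span}\{\calG_{H^1}u\}\big)^{\perp},
\end{equation*}
where the orthogonal complement is taken with respect to $(\cdot,\cdot)_{H_0^1(\Omega)}$. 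This reduces the problem to projecting onto the complement of a one-dimensional subspace.

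It then remains to apply the standard formula for the orthogonal projection onto the complement of the line spanned by $g:=\calG_{H^1}u$, namely
\begin{equation*}
\calP_{\calT_u\calM, H^1}(\xi) = \xi - \frac{(\xi,g)_{H_0^1(\Omega)}}{\norm{g}_{H_0^1(\Omega)}^2}\,g,
\end{equation*}
and to substitute $(\xi,g)_{H_0^1(\Omega)}=(\xi,u)_{L^2(\Omega)}$ back in, which yields exactly \eqref{proj_H1}. Before dividing I would check that the denominator is nonzero: since $\norm{\calG_{H^1}u}_{H_0^1(\Omega)}^2=(\calG_{H^1}u,u)_{L^2(\Omega)}$ and $-\Delta(\calG_{H^1}u)=u$, vanishing of $\calG_{H^1}u$ would force $u=0$, contradicting $\norm{u}_{L^2(\Omega)}=1$; hence $\calG_{H^1}u\neq 0$ and the expression is well posed. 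I do not expect any genuine obstacle here — once the $L^2$-constraint is rewritten as an $H_0^1$-orthogonality relation, the statement is the textbook projection onto a hyperplane, and the only points needing a line of justification are the closedness of $\calT_u\calM$ and the injectivity of $\calG_{H^1}$.
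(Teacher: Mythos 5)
Your proof is correct and follows essentially the same route as the paper: both arguments rest on the Green's-operator identity $(\xi,u)_{L^2(\Omega)}=(\xi,\calG_{H^1}u)_{H_0^1(\Omega)}$, which turns the $L^2$-constraint into $H_0^1$-orthogonality to $\calG_{H^1}u$ — the paper verifies the two defining properties of the projection directly, while you invoke the standard rank-one (hyperplane) projection formula, which encapsulates the same computation. Your additional checks that $\calT_u\calM$ is closed and that $\calG_{H^1}u\neq 0$ (so the denominator is nonzero) are sound refinements that the paper leaves implicit.
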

	
	\begin{proof}
		Let $\calP_{\calT_u\calM}(\xi)$ be defined as in \eqref{proj_H1}. First, $\calP_{\calT_u\calM}(\xi)\in \calT_u\calM$ since
		\begin{align*}
			\left(\calP_{\calT_u\calM}(\xi) , u\right)_{L^2(\Omega)} & = (\xi, u)_{L^2(\Omega)} - \frac{(\xi, u)_{L^2(\Omega)}}{\norm{\calG_{H^1} u}_{H_0^1(\Omega)}^2} \left(\calG_{H^1} u, u\right)_{L^2(\Omega)} \\ & = (\xi, u)_{L^2(\Omega)} - (\xi, u)_{L^2(\Omega)} = 0.
		\end{align*}
		In addition, for any $\eta\in \calT_u\calM$, it follows from $\left( \calG_{H^1} u,\eta\right)_{H_0^1(\Omega)} = (u,\eta)_{L^2(\Omega)} = 0$ that
		\begin{equation*}
			\left( \calP_{\calT_u\calM}(\xi),\eta\right)_{H_0^1(\Omega)} = (\xi, \eta)_{H_0^1(\Omega)}.
		\end{equation*}
		Therefore, $\calP_{\calT_u\calM}(\xi)$ is the desired projection.
	\end{proof}
	
	Combining \eqref{nabla_E_H1} and \eqref{proj_H1}, we obtain the Riemannian gradient of $E$ with respect to $H_0^1$-inner product at $u\in\calM$:
	\begin{align*}
		\nabla^{\mathcal{R}}_{H^1} E(u) & = \calP_{\calT_u\calM, H^1}(\nabla_{H^1} E(u)) =  \calP_{\calT_u\calM, H^1}\left(u + \calG_{H^1} (V u + \beta |u|^2 u)\right)\\
		& = u + \calG_{H^1} (V u + \beta |u|^2 u) - \frac{1 + \left(\calG_{H^1} (V u + \beta |u|^2 u), u\right)_{L^2(\Omega)}}{\norm{\calG_{H^1} u}_{H_0^1(\Omega)}^2} \calG_{H^1} u.
	\end{align*}
	With the above, the projected $H^1$-gradient descent is given by 
	\begin{equation}\label{H1_scheme}
		u_{n+1} = R \left(u_n - \alpha\ \nabla^{\mathcal{R}}_{H^1} E(u_n) \right),\quad n= 1,2\cdots,
	\end{equation}
	where $\alpha$ is the stepsize. 
	
	\subsection{$a_0$-scheme} Another choice of the inner product on $H_0^1(\Omega)$ is 
	\begin{equation*}
		(z,w)_{a_0(\Omega)} = \int_\Omega \nabla z\cdot\nabla w + V z w,
	\end{equation*}
	whose induced norm $\norm{\cdot}_{a_0(\Omega)}$ is equivalent to $\norm{\cdot}_{H_0^1(\Omega)}$ since $V\in L^\infty(\Omega)$ and is non-negative. 
	Similarly, there exists a unique Green's operator $\calG_{a_0}: H_0^1(\Omega)\rightarrow H_0^1(\Omega)$ such that
	\begin{equation*}
		\left(z, \calG_{a_0} w\right)_{a_0(\Omega)}  = (z,w)_{L^2(\Omega)},\quad\forall z\in H_0^1(\Omega),
	\end{equation*}
	and the $a_0$-gradient of $E(u)$ is
	\begin{equation*}
		\nabla_{a_0} E(u) =  u + \beta \calG_{a_0}( |u|^2 u ).
	\end{equation*}
	The project of $\xi\in H_0^1(\Omega)$ onto $\calT_u\calM$, where $u\in H_0^1(\Omega)$, with respect to the $a_0$-inner product, also reads similarly:
	\begin{equation*}
		\calP_{\calT_u\calM, a_0}(\xi) = \xi - \frac{(\xi, u)_{L^2(\Omega)}}{\norm{\calG_{a_0} u}_{a_0(\Omega)}^2} \calG_{a_0} u,
	\end{equation*}
	and hence the $a_0$-scheme iterates as
	\begin{equation}\label{a0_scheme}
		u_{n+1} = R \left(u_n - \alpha\ \nabla^{\mathcal{R}}_{a_0} E(u_n) \right),\quad n= 1,2\cdots,
	\end{equation}
	where
	\begin{align*}
		\nabla^{\mathcal{R}}_{a_0} E(u) & = \calP_{\calT_u\calM, a_0}(\nabla_{a_0} E(u)) = \calP_{\calT_u\calM, a_0}\left(u + \beta \calG_{a_0}(|u|^2 u )\right) \\
		& = u + \beta \calG_{a_0}(|u|^2 u ) - \frac{1 + \beta \left( \calG_{a_0}(|u|^2 u ), u \right)_{L^2(\Omega)}}{\norm{\calG_{a_0} u}_{a_0(\Omega)}^2} \calG_{a_0} u.
	\end{align*}
	
	\subsection{$a_u$-scheme} 
	Now we turn to the derivation of another projected gradient flow with respect to an inner product that varies in the base $u\in H_0^1(\Omega)$. One intuition is that one can have a derivative neater than \eqref{nabla_E_H1} when choosing the inner product carefully. More precisely, for a given $u\in H_0^1(\Omega)$ we consider the inner product $(\cdot,\cdot)_{a_u}$ defined by 
	\begin{equation*}
		(z,w)_{a_u(\Omega)} = \int_\Omega \nabla z\cdot\nabla w +  V zw + \beta |u|^2 zw,
	\end{equation*}
	with an associated Green's operator $\calG_{a_u}: H_0^1(\Omega)\rightarrow H_0^1(\Omega)$ satisfying
	\begin{equation*}
		\left(z, \calG_{a_u} w\right)_{a_u(\Omega)}  = (z,w)_{L^2(\Omega)},\quad\forall z\in H_0^1(\Omega).
	\end{equation*}
	Then it is straightforward to show that the $a_u$-gradient of the energy $E$ is given by 
	\begin{equation*}
		\nabla_{a_u}  E(u)= u.
	\end{equation*}
	Similar to Lemma~\ref{lem:proj_H1}, for $u\in \calM$ and $\xi\in H_0^1(\Omega)$, the projection of $\xi$ onto $\calT_u\calM$, with respect to the $a_u$-inner product, is
	\begin{equation*}
		\calP_{\calT_u\calM, a_u}(\xi) = \xi - \frac{(\xi, u)_{L^2(\Omega)}}{\norm{\calG_{a_u} u}_{a_u(\Omega)}^2}\calG_{a_u} u.
	\end{equation*}
	Therefore, the corresponding Riemannian gradient with respect to the $a_u$-inner product at $u\in\calM$ can be computed as:
	\begin{equation*}
		\nabla^{\mathcal{R}}_{a_u}E(u) = \calP_{\calT_u\calM, a_u}(\nabla_{a_u}E(u)) = u - \frac{1}{\norm{\calG_{a_u} u }_{a_u(\Omega)}^2 }\calG_{a_u} u,
	\end{equation*}
	which leads to the projected $a_u$-gradient descent scheme
	\begin{equation}\label{au_scheme}
		u_{n+1} = R\left(u_n - \alpha\ \nabla^{\mathcal{R}}_{a_{u_n}} E(u)\right),\quad n= 1,2\cdots.
	\end{equation}
	
	\section{Main Results}\label{sec:main}
	
	In this section, we state our main results on the convergence of the $H^1$-scheme, the $a_0$-scheme, and the $a_u$-scheme. All convergent results we establish are in sense of strong convergence with respect to the $H^1_0(\Omega)$ norm.
	
	\subsection{Global convergence} 
	
	It is proved in \cite{henning2020sobolev} that for $a_u$-scheme \eqref{au_scheme} with proper stepsizes, the energy decays along the iterations, i.e., $E(u_{n+1})\leq E(u_n)$, and the sequence $\{u_n\}_{n=0}^\infty$ has a subsequence that converges in $H_0^1(\Omega)$ to a critical point of \eqref{min_energy_problem}. However, as mentioned in \cite{henning2020sobolev}, it was an open question whether the iterates $\{u_n\}_{n=0}^\infty$ generated by the $H^1$-scheme \eqref{H1_scheme} or the $a_0$-scheme \eqref{a0_scheme} converge to critical points with decaying energy. 
	We give an affirmative answer to this question in Theorems~\ref{thm:energy_decay_H1}, \ref{thm:energy_decay_a0}, \ref{thm:global_conv_H1}, and \ref{thm:global_conv_a0}.
	
	\begin{theorem}[Energy decay for $H^1$-scheme]\label{thm:energy_decay_H1}
		Suppose that $d\in\{1,2,3\}$ and that $u_0\in \calM\subset H_0^1(\Omega)$. Let $\{u_n\}_{n=0}^\infty\subset \calM$ be the iterates generated by the $H^1$-scheme \eqref{H1_scheme} starting at $u_0$. There exist constants $C_u$, $C_g$, and $C_\alpha\leq 1$ depending only on $\Omega$, $d$, $\beta$, $V$, and $\norm{u_0}_{H_0^1(\Omega)}$, such that as long as the step size satisfies $0<\alpha_{\min} \leq \alpha_n\leq \alpha_{\max}\leq C_\alpha,\ \forall~n\geq 0$, the followings hold for any $n\geq 0$:
		\begin{itemize}
			\item[(i)] $\norm{u_n}_{H_0^1(\Omega)}\leq C_u$.
			\item[(ii)] $\norm{\nabla^{\mathcal{R}}_{H^1}E(u_n)}_{H_0^1(\Omega)}\leq \norm{\nabla_{H^1} E(u_n)}_{H_0^1(\Omega)} \leq C_g$.
			\item[(iii)] $E(u_n) - E(u_{n+1})\geq \frac{\alpha_{\min}}{2} \norm{\nabla^{\mathcal{R}}_{H^1} E(u_n)}_{H_0^1(\Omega)}^2$.
		\end{itemize}
	\end{theorem}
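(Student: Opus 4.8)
\emph{Proof proposal.} The plan is to establish (i), (ii), and (iii) simultaneously by induction on $n$, because the three constants are mutually dependent: a uniform $H_0^1$ bound on the iterates is what controls the gradient and the remainder terms in the energy expansion, while the energy decay is exactly what feeds the $H_0^1$ bound back. I would fix the constants in the order $C_u\to C_g\to C_\alpha$, and first record the two structural facts that drive everything. Since $V\geq 0$ and $\beta\geq 0$, the energy dominates the Dirichlet norm, $E(u)\geq \tfrac12\norm{u}_{H_0^1(\Omega)}^2$; conversely, using the embedding $H_0^1(\Omega)\hookrightarrow L^4(\Omega)$ (valid for $d\leq 3$) together with $\norm{u_0}_{L^2(\Omega)}=1$, the number $E(u_0)$ is bounded by an explicit function of $\norm{u_0}_{H_0^1(\Omega)}$. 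I would therefore set $C_u:=\sqrt{2E(u_0)}$, so that any iterate with $E(u_n)\leq E(u_0)$ automatically satisfies (i).

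Given the inductive hypothesis $\norm{u_n}_{H_0^1(\Omega)}\leq C_u$, claim (ii) is a direct estimate. Orthogonal projection in the $H_0^1$-inner product is a contraction, which gives $\norm{\nabla^{\mathcal{R}}_{H^1}E(u_n)}_{H_0^1(\Omega)}\leq \norm{\nabla_{H^1}E(u_n)}_{H_0^1(\Omega)}$; and from \eqref{nabla_E_H1} together with the Green's operator bound $\norm{\calG_{H^1}w}_{H_0^1(\Omega)}\leq C_P\norm{w}_{L^2(\Omega)}$ (obtained by testing the defining identity of $\calG_{H^1}$ against $\calG_{H^1}w$ and applying Poincar\'e) I get $\norm{\nabla_{H^1}E(u_n)}_{H_0^1(\Omega)}\leq \norm{u_n}_{H_0^1(\Omega)}+C_P\norm{Vu_n+\beta|u_n|^2u_n}_{L^2(\Omega)}$. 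The right-hand side is bounded by a function of $C_u$ via $\norm{Vu_n}_{L^2(\Omega)}\leq V_{\max}$ and $\norm{\beta|u_n|^2u_n}_{L^2(\Omega)}=\beta\norm{u_n}_{L^6(\Omega)}^3\lesssim \beta\norm{u_n}_{H_0^1(\Omega)}^3$ (using $H_0^1(\Omega)\hookrightarrow L^6(\Omega)$ for $d\leq 3$); this defines $C_g$.

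The heart of the argument is (iii), which I would split into a pre-retraction descent step and a retraction step. Writing $g_n:=\nabla^{\mathcal{R}}_{H^1}E(u_n)$ and $\wt u_{n+1}:=u_n-\alpha_n g_n$, I would expand $E(\wt u_{n+1})-E(u_n)$ exactly; since $E$ is a quartic polynomial functional, the expansion terminates. The first-order term equals $-\alpha_n\,DE(u_n)[g_n]=-\alpha_n\norm{g_n}_{H_0^1(\Omega)}^2$, using $g_n\in\calT_{u_n}\calM$ and the defining property of the Riemannian gradient. The quadratic, cubic, and quartic terms each carry at least one extra factor $\alpha_n$ and, after applying the Sobolev/Poincar\'e inequalities together with $\norm{u_n}_{H_0^1(\Omega)}\leq C_u$ and $\norm{g_n}_{H_0^1(\Omega)}\leq C_g$ from (i)--(ii), are dominated by $C'\alpha_n^2\norm{g_n}_{H_0^1(\Omega)}^2$ for a constant $C'$ depending only on the stated data. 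Choosing $C_\alpha\leq\min\{1,1/(2C')\}$ then yields $E(\wt u_{n+1})-E(u_n)\leq -\tfrac{\alpha_n}{2}\norm{g_n}_{H_0^1(\Omega)}^2$.

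For the retraction step I would exploit the key observation that normalization never increases the energy. Since $g_n\perp u_n$ in $L^2(\Omega)$ and $\norm{u_n}_{L^2(\Omega)}=1$, we have $\norm{\wt u_{n+1}}_{L^2(\Omega)}^2=1+\alpha_n^2\norm{g_n}_{L^2(\Omega)}^2\geq 1$, so $u_{n+1}=R(\wt u_{n+1})$ rescales $\wt u_{n+1}$ by a factor $\leq 1$; as every term of $E$ is non-negative and positively homogeneous of even degree, $E(u_{n+1})\leq E(\wt u_{n+1})$. Combining the two steps gives (iii), and in particular $E(u_{n+1})\leq E(u_n)\leq E(u_0)$, which through $E(u_{n+1})\geq\tfrac12\norm{u_{n+1}}_{H_0^1(\Omega)}^2$ closes the induction for (i). The main obstacle I anticipate is not any single estimate but the bookkeeping: making the chain of definitions $C_u\to C_g\to C'\to C_\alpha$ genuinely non-circular, and ensuring the superquadratic (cubic and quartic) remainder terms are correctly demoted to $O(\norm{g_n}_{H_0^1(\Omega)}^2)$ using the a priori bound $C_g$, rather than any assumed smallness of $g_n$.
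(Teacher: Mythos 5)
Your proposal is correct, and it follows the paper's overall architecture: the same induction with the same ordering of conclusions, the same choice of constants (your $C_u=\sqrt{2E(u_0)}$ is exactly what the paper's explicit $C_u$ encodes), the same contraction and Green's-operator estimates for (ii), and the same exact quartic expansion with the identity $(\nabla_{H^1}E(u_n),g_n)_{H_0^1(\Omega)}=\norm{g_n}_{H_0^1(\Omega)}^2$ driving the descent step. The one genuine divergence is your treatment of the retraction. The paper writes $u_{n+1}=\tilde u_n+R_n$, bounds $\norm{R_n}_{H_0^1(\Omega)}$ via its Lemma~\ref{lem:esti_retraction}, and then runs the Taylor expansion (Lemma~\ref{lem:linear_error}) a second time around $\tilde u_n$ to show $|E(\tilde u_n)-E(\tilde u_n+R_n)|\leq \alpha_n^2 C_R\norm{g_n}_{H_0^1(\Omega)}^2$, which costs an additional constant $C_R$ and a bound on $\norm{\nabla_{H^1}E(\tilde u_n)}_{H_0^1(\Omega)}$. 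You instead observe that since $(u_n,g_n)_{L^2(\Omega)}=0$ gives $\norm{\tilde u_n}_{L^2(\Omega)}\geq 1$, the retraction rescales by a factor at most $1$, and because $V\geq 0$, $\beta\geq 0$ every term of $E$ is non-negative and of even homogeneity, so $E(u_{n+1})\leq E(\tilde u_n)$ outright. This is a cleaner and strictly simpler argument for (iii): it eliminates $R_n$, $C_R$, and one application of the expansion lemma, and it slightly improves the admissible $C_\alpha$. What the paper's heavier route buys is reusable machinery: the $H_0^1$-norm estimate \eqref{eq:Rn} on $R_n$ is needed again in the local convergence proof of Theorem~\ref{thm:local_conv_H1}, where energy monotonicity alone is not enough because one must track the $H_0^1$ distance to $u^*$ through the retraction; your energy-monotonicity trick cannot substitute for it there. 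For the theorem at hand, however, your argument is complete and self-contained.
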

	
	\begin{theorem}[Energy decay for $a_0$-scheme]\label{thm:energy_decay_a0}
		Suppose that $d\in\{1,2,3\}$ and that $u_0\in \calM\subset H_0^1(\Omega)$. Let $\{u_n\}_{n=0}^\infty\subset \calM$ be the iterates generated by the $a_0$-scheme \eqref{a0_scheme} starting at $u_0$. There exist constants $C_u$, $C_g$, and $C_\alpha\leq 1$ depending only on $\Omega$, $d$, $\beta$, $V$, and $\norm{u_0}_{H_0^1(\Omega)}$, such that as long as the step size satisfies $0<\alpha_{\min} \leq \alpha_n\leq \alpha_{\max}\leq C_\alpha,\ \forall~n\geq 0$, the followings hold for any $n\geq 0$:
		\begin{itemize}
			\item[(i)] $\norm{u_n}_{a_0(\Omega)}\leq C_u$.
			\item[(ii)] $\norm{\nabla^{\mathcal{R}}_{a_0}E(u_n)}_{a_0(\Omega)}\leq \norm{\nabla_{a_0} E(u_n)}_{a_0(\Omega)} \leq C_g$.
			\item[(iii)] $E(u_n) - E(u_{n+1})\geq \frac{\alpha_{\min}}{2} \norm{\nabla^{\mathcal{R}}_{a_0} E(u_n)}_{a_0(\Omega)}^2$.
		\end{itemize}
	\end{theorem}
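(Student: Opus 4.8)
The plan is to prove the three items simultaneously by induction on $n$, in exact parallel with the $H^1$-scheme argument of Theorem~\ref{thm:energy_decay_H1}, the only structural change being that every norm, gradient, and projection is now taken in the $a_0$-geometry. The organizing invariant is the energy monotonicity $E(u_{n+1})\leq E(u_n)$. Since $\norm{\cdot}_{a_0(\Omega)}$ is equivalent to $\norm{\cdot}_{H_0^1(\Omega)}$ and $E(u)\geq\frac12\norm{u}_{H_0^1(\Omega)}^2$ by non-negativity of $V$ and $\beta$, monotonicity immediately yields the uniform bound $\norm{u_n}_{a_0(\Omega)}\leq C_u$ of item (i), with $C_u$ depending only on $E(u_0)$ and hence only on $\norm{u_0}_{H_0^1(\Omega)}$ and the data. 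I would first record the identity $E(u)=\frac12\norm{u}_{a_0(\Omega)}^2+\frac{\beta}{4}\norm{u}_{L^4(\Omega)}^4$, which makes the $a_0$-geometry the natural one: the gradient is the clean expression $\nabla_{a_0}E(u)=u+\beta\calG_{a_0}(|u|^2u)$, with no potential term surviving inside the Green's operator.

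Granting the inductive hypothesis $\norm{u_n}_{a_0(\Omega)}\leq C_u$, item (ii) follows from two estimates. First, the Green's operator maps $L^2$ boundedly into the $a_0$-space: testing $(z,\calG_{a_0}w)_{a_0(\Omega)}=(z,w)_{L^2(\Omega)}$ with $z=\calG_{a_0}w$ and applying Poincar\'e gives $\norm{\calG_{a_0}w}_{a_0(\Omega)}\leq C_P\norm{w}_{L^2(\Omega)}$. Second, the embedding $H_0^1(\Omega)\hookrightarrow L^6(\Omega)$ (valid for $d\leq 3$) controls the cubic term via $\norm{|u|^2u}_{L^2(\Omega)}=\norm{u}_{L^6(\Omega)}^3\lesssim\norm{u}_{H_0^1(\Omega)}^3\leq C_u^3$. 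Combining these bounds $\norm{\nabla_{a_0}E(u_n)}_{a_0(\Omega)}$ by a constant $C_g$ depending only on $C_u$ and the data; the first inequality in (ii) is immediate because $\calP_{\calT_{u_n}\calM,a_0}$ is the $a_0$-orthogonal projection onto $\calT_{u_n}\calM$, hence norm non-increasing.

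The heart of the argument is (iii), which I would establish in two steps. Writing $w_n=u_n-\alpha_n\nabla^{\mathcal{R}}_{a_0}E(u_n)$ so that $u_{n+1}=R(w_n)$, the first step is a descent lemma in the $a_0$-Hilbert space: the same cubic estimate shows $\nabla_{a_0}E$ is Lipschitz in $\norm{\cdot}_{a_0(\Omega)}$ on any bounded ball with constant $L$ depending only on the radius, giving $E(w_n)\leq E(u_n)+(\nabla_{a_0}E(u_n),w_n-u_n)_{a_0(\Omega)}+\frac{L}{2}\norm{w_n-u_n}_{a_0(\Omega)}^2$. Using $w_n-u_n=-\alpha_n\nabla^{\mathcal{R}}_{a_0}E(u_n)$ and the projection identity $(\nabla_{a_0}E(u_n),\nabla^{\mathcal{R}}_{a_0}E(u_n))_{a_0(\Omega)}=\norm{\nabla^{\mathcal{R}}_{a_0}E(u_n)}_{a_0(\Omega)}^2$ turns this into $E(w_n)\leq E(u_n)-\alpha_n(1-\frac{L\alpha_n}{2})\norm{\nabla^{\mathcal{R}}_{a_0}E(u_n)}_{a_0(\Omega)}^2$, which is at most $E(u_n)-\frac{\alpha_n}{2}\norm{\nabla^{\mathcal{R}}_{a_0}E(u_n)}_{a_0(\Omega)}^2$ once $\alpha_n\leq 1/L$. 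The second step handles the retraction: because $\nabla^{\mathcal{R}}_{a_0}E(u_n)\in\calT_{u_n}\calM$ is $L^2$-orthogonal to $u_n$, one has $\norm{w_n}_{L^2(\Omega)}^2=1+\alpha_n^2\norm{\nabla^{\mathcal{R}}_{a_0}E(u_n)}_{L^2(\Omega)}^2\geq 1$, and the explicit form of $E$ then gives $E(R(w_n))=E(w_n/\norm{w_n}_{L^2(\Omega)})\leq E(w_n)$, since dividing by a factor $\geq 1$ scales down both the $a_0$- and $L^4$-terms. Chaining the two steps yields (iii), and $E(u_{n+1})\leq E(u_n)\leq E(u_0)$ closes the induction.

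The main obstacle I anticipate is not any single estimate but keeping the constants self-consistent: the Lipschitz constant $L$ must be evaluated on a ball large enough to contain both $u_n$ and $w_n$, whose radius $C_u+\alpha_{\max}C_g$ itself depends on the step-size bound. I would break this apparent circularity by fixing $\alpha_{\max}\leq C_\alpha\leq 1$ first, so the enclosing ball has the fixed radius $C_u+C_g$, computing $L$ on that ball, and only then setting $C_\alpha=\min\{1,1/L\}$. Some care is also needed to verify that $C_u$, $C_g$, and $C_\alpha$ depend only on $\norm{u_0}_{H_0^1(\Omega)}$ and the problem data, which holds because $E(u_0)$ is itself controlled by $\norm{u_0}_{H_0^1(\Omega)}$ through the Sobolev embedding.
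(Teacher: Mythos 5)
Your proposal is correct, and its overall skeleton — simultaneous induction on (i)--(iii), with the energy monotonicity closing the loop and the step-size constant $C_\alpha$ fixed last to break the circularity among $C_u$, $C_g$, $C_\alpha$ — is the same as the paper's (which proves the $H^1$ case in detail and obtains the $a_0$ case by swapping norms). However, your handling of the key step (iii) is genuinely different, and cleaner. The paper decomposes $u_{n+1}=\tilde u_n+R_n$ with $\tilde u_n=u_n-\alpha_n g_n$, bounds the retraction error $\norm{R_n}_{H_0^1(\Omega)}$ via Lemma~\ref{lem:esti_retraction}, and then applies the Taylor-type estimate of Lemma~\ref{lem:linear_error} \emph{twice} — once for the linear step and once for the perturbation $R_n$ — incurring an extra $O(\alpha_n^2)\norm{g_n}^2$ term with a constant $C_R$ that enters the step-size restriction. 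You instead observe that the retraction can only decrease the energy: since $g_n\in\calT_{u_n}\calM$ is $L^2$-orthogonal to $u_n$, one has $\norm{\tilde u_n}_{L^2(\Omega)}\geq 1$, and $E(cu)\leq E(u)$ for $0<c\leq 1$ because $E$ is a sum of nonnegative terms homogeneous of degrees $2$ and $4$ (using $V\geq 0$, $\beta\geq 0$); hence $E(u_{n+1})=E(R(\tilde u_n))\leq E(\tilde u_n)$ comes for free, eliminating Lemma~\ref{lem:esti_retraction} and the second Taylor estimate entirely and giving the tidier restriction $\alpha_{\max}\leq\min\{1,1/L\}$. Your remaining ingredients are equivalent in substance to the paper's: the Hilbert-space descent lemma via local Lipschitz continuity of $\nabla_{a_0}E$ on the ball of radius $C_u+C_g$ plays the role of Lemma~\ref{lem:linear_error} (and the needed Lipschitz bound does follow from the cubic estimate you cite, via $\norm{|u|^2u-|v|^2v}_{L^2}\lesssim(\norm{u}_{L^6}^2+\norm{v}_{L^6}^2)\norm{u-v}_{L^6}$), while your $L^6$-embedding/$L^2$-bound route for (ii) replaces the paper's $L^{4/3}$--$H^{-1}$ duality, both valid for $d\leq 3$. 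What the paper's heavier bookkeeping buys is reusable machinery: the explicit retraction-error bound \eqref{eq:Rn} is needed again in the local-convergence proof of Theorem~\ref{thm:local_conv_H1}, where one tracks $\norm{u_n-u^*}_{H_0^1(\Omega)}$ rather than the energy, so the scaling-monotonicity trick does not substitute for it there.
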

	
	The boundedness of $\{u_n\}$ as in Theorem~\ref{thm:energy_decay_H1}~(i) or Theorem~\ref{thm:energy_decay_a0}~(i) implies that the sequence $\{u_n\}$ has weak limits in $H_0^1(\Omega)$ ($\norm{\cdot}_{H_0^1(\Omega)}$ and $\norm{\cdot}_{a_0(\Omega)}$ are equivalent as in Lemma~\ref{lem:equiv_a0_H1}), and the following theorems show that any weak limit is a critical point of $E$.
	
	\begin{theorem}[Global convergence for $H^1$-scheme]\label{thm:global_conv_H1}
		Under the same assumptions of Theorem~\ref{thm:energy_decay_H1}, for any weak limit $u^*$ of $\{u_n\}_{n=0}^\infty$ in $H_0^1(\Omega)$, $u^*$ is a critical point of the problem \eqref{min_energy_problem}, i.e., $\nabla^{\mathcal{R}}_{H^1}E(u^*) = 0$, and $\{u_n\}_{n=0}^\infty$ has a subsequence that converges to $u^*$ strongly in $H_0^1(\Omega)$.
	\end{theorem}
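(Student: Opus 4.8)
The plan is to combine the energy-dissipation estimate with a compactness argument, and then to exploit the algebraic structure of $\nabla^{\mathcal{R}}_{H^1}E$ to upgrade weak convergence into strong convergence. First, since $E\geq 0$ on $H_0^1(\Omega)$ (all three terms in \eqref{min_energy_problem} are non-negative), summing the dissipation bound Theorem~\ref{thm:energy_decay_H1}(iii) telescopes to $\frac{\alpha_{\min}}{2}\sum_{n=0}^\infty \norm{\nabla^{\mathcal{R}}_{H^1}E(u_n)}_{H_0^1(\Omega)}^2\leq E(u_0)<\infty$, so that $\norm{\nabla^{\mathcal{R}}_{H^1}E(u_n)}_{H_0^1(\Omega)}\to 0$. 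Now let $u_{n_k}\rightharpoonup u^*$ be any weakly convergent subsequence, which exists by the uniform bound Theorem~\ref{thm:energy_decay_H1}(i). Because $d\in\{1,2,3\}$, the embeddings $H_0^1(\Omega)\hookrightarrow L^2(\Omega)$ and $H_0^1(\Omega)\hookrightarrow L^4(\Omega)$ are compact (Rellich--Kondrachov), hence $u_{n_k}\to u^*$ strongly in both $L^2(\Omega)$ and $L^4(\Omega)$. In particular $\norm{u^*}_{L^2(\Omega)}=\lim_k\norm{u_{n_k}}_{L^2(\Omega)}=1$, so $u^*\in\calM$ and $u^*\neq 0$.

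Next I would pass to the limit in each piece of the gradient formula. Write $\nabla^{\mathcal{R}}_{H^1}E(u)=u+g(u)-\mu(u)\,\calG_{H^1}u$ with $g(u)=\calG_{H^1}(Vu+\beta|u|^2u)$ and $\mu(u)=\big(1+(g(u),u)_{L^2(\Omega)}\big)/\norm{\calG_{H^1}u}_{H_0^1(\Omega)}^2$. The Green's operator satisfies $\norm{\calG_{H^1}w}_{H_0^1(\Omega)}\leq C\norm{w}_{H^{-1}(\Omega)}$, and the cubic map $v\mapsto|v|^2v$ is continuous from $L^4(\Omega)$ into $L^{4/3}(\Omega)\hookrightarrow H^{-1}(\Omega)$ via the pointwise bound $\big||v|^2v-|w|^2w\big|\leq C(|v|^2+|w|^2)|v-w|$ and Hölder's inequality; likewise $Vu_{n_k}\to Vu^*$ in $L^2(\Omega)\hookrightarrow H^{-1}(\Omega)$. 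Combining these with the $L^2$- and $L^4$-convergence above gives $g(u_{n_k})\to g(u^*)$ and $\calG_{H^1}u_{n_k}\to\calG_{H^1}u^*$ strongly in $H_0^1(\Omega)$. Since $u^*\neq 0$ the denominator $\norm{\calG_{H^1}u_{n_k}}_{H_0^1(\Omega)}^2$ stays bounded away from zero, so $\mu(u_{n_k})\to\mu(u^*)$.

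Now comes the key observation, which delivers both conclusions at once. Rearranging the gradient identity gives
$$u_{n_k}=\nabla^{\mathcal{R}}_{H^1}E(u_{n_k})-g(u_{n_k})+\mu(u_{n_k})\calG_{H^1}u_{n_k}.$$
Every term on the right converges strongly in $H_0^1(\Omega)$ --- the first to $0$ by the first paragraph, the others by the second --- so $u_{n_k}$ itself converges strongly in $H_0^1(\Omega)$; its strong limit must coincide with its weak limit $u^*$, establishing the claimed strong subsequential convergence $u_{n_k}\to u^*$. Passing to the limit in the same identity (equivalently, directly in $\nabla^{\mathcal{R}}_{H^1}E(u_{n_k})\to 0$, now using the full strong convergence $u_{n_k}\to u^*$) identifies the limit as $\nabla^{\mathcal{R}}_{H^1}E(u^*)=u^*+g(u^*)-\mu(u^*)\calG_{H^1}u^*=0$, so $u^*$ is a critical point of \eqref{min_energy_problem}.

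I expect the main technical obstacle to be the continuity analysis of the nonlinear term in the appropriate dual topology: one must verify that $|u_{n_k}|^2u_{n_k}\to|u^*|^2u^*$ strongly in $H^{-1}(\Omega)$ (rather than merely weakly) and that this survives application of $\calG_{H^1}$, which is exactly where the compact $L^4$-embedding enters. The accompanying bookkeeping --- that the normalization coefficient $\mu$ is continuous because $u^*\neq0$ keeps its denominator nondegenerate --- is routine once $u^*\in\calM$ is known. The conceptual crux is recognizing that the leading term $u$ in $\nabla^{\mathcal{R}}_{H^1}E(u)$ lets one read off strong $H_0^1$-convergence of $u_{n_k}$ for free from the merely weakly convergent subsequence together with $\norm{\nabla^{\mathcal{R}}_{H^1}E(u_n)}_{H_0^1(\Omega)}\to 0$.
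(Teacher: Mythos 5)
Your proposal is correct, and its skeleton matches the paper's: sum the dissipation bound of Theorem~\ref{thm:energy_decay_H1}(iii) to get $\norm{\nabla^{\mathcal{R}}_{H^1}E(u_n)}_{H_0^1(\Omega)}\to 0$, extract a subsequence converging weakly in $H_0^1(\Omega)$ and (by Rellich--Kondrachov) strongly in $L^2(\Omega)$ and $L^4(\Omega)$, and then use Lemma~\ref{lem:Gu} to upgrade this to strong $H_0^1$-convergence of $\calG_{H^1}u_{n_k}$, $\calG_{H^1}(Vu_{n_k})$ and $\calG_{H^1}(|u_{n_k}|^2u_{n_k})$, together with convergence of the normalization coefficient. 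Where you genuinely diverge is the endgame. The paper (Theorem~\ref{thm:strong_conv}) argues in two steps: first it concludes $\nabla^{\mathcal{R}}_{H^1}E(u_{n_k})\rightharpoonup\nabla^{\mathcal{R}}_{H^1}E(u^*)$ \emph{weakly} (the leading term $u_{n_k}$ only converges weakly), which combined with $\norm{\nabla^{\mathcal{R}}_{H^1}E(u_{n_k})}_{H_0^1(\Omega)}\to 0$ gives criticality; then, separately, it tests the gradient against $u_{n_k}$ itself, $(\nabla^{\mathcal{R}}_{H^1}E(u_{n_k}),u_{n_k})_{H_0^1(\Omega)}\to 0$, isolates $\norm{u_{n_k}}_{H_0^1(\Omega)}^2$ among terms that converge individually, and invokes the standard fact that weak convergence plus norm convergence implies strong convergence. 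You instead solve the gradient identity for the iterate,
\begin{equation*}
  u_{n_k}=\nabla^{\mathcal{R}}_{H^1}E(u_{n_k})-\calG_{H^1}\bigl(Vu_{n_k}+\beta|u_{n_k}|^2u_{n_k}\bigr)+\mu(u_{n_k})\,\calG_{H^1}u_{n_k},
\end{equation*}
and observe that every term on the right converges strongly, which delivers strong convergence of $u_{n_k}$ and criticality of $u^*$ in one stroke. Your route is slightly more economical (no duality pairing trick, no separate norm-convergence step), and it makes transparent that the compactness of the Green's-operator terms is the only real input; the paper's route has the mild advantage of being stated as a standalone theorem (Theorem~\ref{thm:strong_conv}) whose weak-convergence-of-gradients formulation is reusable verbatim for the $a_0$-scheme. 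One point your write-up handles correctly but should not be glossed over: the nondegeneracy of the denominator $\norm{\calG_{H^1}u_{n_k}}_{H_0^1(\Omega)}^2$ requires $u^*\neq 0$, which you rightly obtain from $\norm{u^*}_{L^2(\Omega)}=\lim_k\norm{u_{n_k}}_{L^2(\Omega)}=1$ via the strong $L^2$-convergence; this is exactly why the compact embedding, and not merely weak convergence, is indispensable there.
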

	
	\begin{theorem}[Global convergence for $a_0$-scheme]\label{thm:global_conv_a0}
		Under the same assumptions of Theorem~\ref{thm:energy_decay_a0}, for any weak limit $u^*$ of $\{u_n\}_{n=0}^\infty$ in $H_0^1(\Omega)$, $u^*$ is a critical point of the problem \eqref{min_energy_problem}, i.e., $\nabla^{\mathcal{R}}_{a_0}E(u^*) = 0$, and $\{u_n\}_{n=0}^\infty$ has a subsequence that converges to $u^*$ strongly in $H_0^1(\Omega)$.
	\end{theorem}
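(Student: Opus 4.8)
The argument runs parallel to the proof of Theorem~\ref{thm:global_conv_H1}, now in the $a_0$-geometry; the plan is to first deduce that the Riemannian gradient tends to zero and then to pass to the limit in the gradient identity along a weakly convergent subsequence. Since $V\geq 0$ and $\beta\geq 0$, we have $E(u)\geq 0$ for all $u\in\calM$, so combining the monotone energy decay in Theorem~\ref{thm:energy_decay_a0}~(iii) with a telescoping sum gives
$$\sum_{n=0}^\infty \norm{\nabla^{\mathcal{R}}_{a_0}E(u_n)}_{a_0(\Omega)}^2 \leq \frac{2}{\alpha_{\min}}\,E(u_0)<\infty,$$
whence $\norm{\nabla^{\mathcal{R}}_{a_0}E(u_n)}_{a_0(\Omega)}\to 0$. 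By the boundedness in Theorem~\ref{thm:energy_decay_a0}~(i) together with the equivalence of the $a_0$- and $H_0^1$-norms (Lemma~\ref{lem:equiv_a0_H1}), the sequence $\{u_n\}$ is bounded in $H_0^1(\Omega)$ and hence admits a subsequence with $u_{n_k}\rightharpoonup u^*$ weakly in $H_0^1(\Omega)$. As $d\in\{1,2,3\}$, the embedding $H_0^1(\Omega)\hookrightarrow L^4(\Omega)$ is compact, so $u_{n_k}\to u^*$ strongly in $L^4(\Omega)$ and in $L^2(\Omega)$; in particular $\norm{u^*}_{L^2(\Omega)}=1$, so $u^*\in\calM$ and $u^*\neq 0$.

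The crux is the weak-to-strong continuity of the nonlinear pieces of the gradient. Writing
$$\nabla^{\mathcal{R}}_{a_0}E(u_{n_k}) = u_{n_k} + \beta\,\calG_{a_0}\bigl(|u_{n_k}|^2 u_{n_k}\bigr) - c_{n_k}\,\calG_{a_0}u_{n_k},\qquad c_{n_k}:=\frac{1+\beta\bigl(\calG_{a_0}(|u_{n_k}|^2u_{n_k}),\,u_{n_k}\bigr)_{L^2(\Omega)}}{\norm{\calG_{a_0}u_{n_k}}_{a_0(\Omega)}^2},$$
I would first note that $\calG_{a_0}:L^2(\Omega)\to H_0^1(\Omega)$ is bounded, so $u_{n_k}\to u^*$ in $L^2(\Omega)$ forces $\calG_{a_0}u_{n_k}\to\calG_{a_0}u^*$ strongly in $H_0^1(\Omega)$; moreover $\calG_{a_0}u^*\neq 0$ (otherwise $u^*=0$), so the denominators stay bounded away from zero. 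For the cubic term, the Nemytskii map $s\mapsto|s|^2s$ is continuous from $L^4(\Omega)$ to $L^{4/3}(\Omega)$ by H\"older's inequality, hence $|u_{n_k}|^2u_{n_k}\to|u^*|^2u^*$ in $L^{4/3}(\Omega)\hookrightarrow H^{-1}(\Omega)$; since the coercive $a_0$-form lets $\calG_{a_0}$ extend to a bounded map $H^{-1}(\Omega)\to H_0^1(\Omega)$ via Lax--Milgram, this yields $\calG_{a_0}(|u_{n_k}|^2u_{n_k})\to\calG_{a_0}(|u^*|^2u^*)$ strongly in $H_0^1(\Omega)$, and consequently $c_{n_k}\to c^*$, the coefficient built from $u^*$.

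Finally I would solve the gradient identity for $u_{n_k}$:
$$u_{n_k} = \nabla^{\mathcal{R}}_{a_0}E(u_{n_k}) - \beta\,\calG_{a_0}\bigl(|u_{n_k}|^2 u_{n_k}\bigr) + c_{n_k}\,\calG_{a_0}u_{n_k}.$$
The first term on the right tends to $0$ strongly in $H_0^1(\Omega)$ by the telescoping estimate, while the remaining two terms converge strongly by the continuity just established; hence $u_{n_k}$ converges strongly in $H_0^1(\Omega)$, and its strong limit must coincide with the weak limit $u^*$. This simultaneously gives the strong convergence $u_{n_k}\to u^*$ in $H_0^1(\Omega)$ and, upon letting $k\to\infty$ in the identity, the critical-point equation $\nabla^{\mathcal{R}}_{a_0}E(u^*)=0$, i.e., $u^*$ is a critical point of \eqref{min_energy_problem}.

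I expect the main obstacle to be controlling the cubic nonlinearity under merely weak $H_0^1$-convergence: the map $u\mapsto\calG_{a_0}(|u|^2u)$ is not weakly continuous on its own, and the argument genuinely depends on upgrading weak $H_0^1$-convergence to strong $L^4$-convergence through the compact Sobolev embedding, which is exactly where the restriction $d\le 3$ enters. A secondary point requiring care is that the normalization denominator $\norm{\calG_{a_0}u^*}_{a_0(\Omega)}^2$ must be bounded away from zero; this rests on preservation of the $L^2$-constraint in the limit, guaranteed by the strong $L^2$-convergence, so that $u^*\in\calM$ and in particular $u^*\neq 0$.
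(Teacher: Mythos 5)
Your proposal is correct, and its preparatory steps coincide with the paper's (the paper proves the $H^1$ case via Theorem~\ref{thm:strong_conv} and notes the $a_0$ case is identical with $a_0$-norms): the telescoping bound from Theorem~\ref{thm:energy_decay_a0}~(iii), extraction of a weakly convergent subsequence, strong $L^2$/$L^4$ convergence by compact embedding, strong $H_0^1$-convergence of $\calG_{a_0}u_{n_k}$ and of $\calG_{a_0}(|u_{n_k}|^2u_{n_k})$ via the $L^{4/3}\hookrightarrow H^{-1}$ estimate, and convergence of the coefficient $c_{n_k}$ with denominator bounded away from zero because $u^*\in\calM$. Where you genuinely differ is the endgame. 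The paper argues in two separate steps: it first shows $\nabla^{\mathcal{R}}_{a_0}E(u_{n_k})\rightharpoonup \nabla^{\mathcal{R}}_{a_0}E(u^*)$ \emph{weakly} in $H_0^1(\Omega)$ and concludes criticality from the vanishing of the gradient norms; it then upgrades weak to strong convergence of $u_{n_k}$ by testing the gradient against $u_{n_k}$, deducing $\norm{u_{n_k}}_{a_0(\Omega)}\to\norm{u^*}_{a_0(\Omega)}$, and invoking the Hilbert-space fact that weak convergence plus norm convergence implies strong convergence. You instead solve the gradient identity for $u_{n_k}$, writing it as
\begin{equation*}
    u_{n_k} = \nabla^{\mathcal{R}}_{a_0}E(u_{n_k}) - \beta\,\calG_{a_0}\bigl(|u_{n_k}|^2 u_{n_k}\bigr) + c_{n_k}\,\calG_{a_0}u_{n_k},
\end{equation*}
a sum of strongly convergent terms; this yields strong convergence in one stroke, with criticality then following by passing to the limit. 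Your assembly is slightly more economical---it dispenses with the Radon--Riesz step and the weak-convergence bookkeeping for the gradient---whereas the paper's two-step version packages criticality as a standalone statement about arbitrary bounded sequences with vanishing Riemannian gradient (Theorem~\ref{thm:strong_conv}), which it can then reuse verbatim for both the $H^1$- and $a_0$-schemes. Both arguments are valid.
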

	
	
	For the $a_u$-scheme, the energy decay property and the global convergence to critical points are established in \cite{henning2020sobolev}. In addition, \cite{henning2020sobolev} provides a stronger global convergence result for $a_u$-scheme: the whole sequence $\{u_n\}_{n=0}^\infty$ converges to the ground state, not just a critical point of the energy functional. The idea is that the $a_u$-scheme is positive preserving, i.e., $u_n\geq 0$ implies $u_{n+1}\geq 0$, and that the ground state is the unique positive eigenfunction of \eqref{eigen_problem}. However, the positive preserving property is not guaranteed for the $H^1$-scheme or the $a_0$-scheme, and hence the arguments in \cite{henning2020sobolev} for global convergence to ground state can not be applied directly  to the $H^1$-scheme or the $a_0$-scheme. 

	\subsection{Fast local convergence}
	
	In this section, we always denote $u^*\in H_0^1(\Omega)$ as the ground state of the eigenvalue problem \eqref{eigen_problem}. Consider the linearized problem at $u^*$:
	\begin{equation}\label{linear_eigen_problem}
		\begin{cases}
			\left(-\Delta + V + \beta |u^*|^2\right) u = \lambda u, & \text{in }\Omega,\\
			u = 0, &\text{on }\partial\Omega.
		\end{cases}
	\end{equation}
	By \cite{zhang2019exponential}*{Theorem 3.1}, $u^*$ is also the ground state of the linearized problem \eqref{linear_eigen_problem} and is H\"older continuous, if $\Omega$ is convex Lipschitz or $\partial \Omega$ is smooth. 
	
	\begin{assumption}[Positive eigengap for the linearized problem]\label{asp:eigengap}
		Let $\lambda_0$ and $\lambda_1$ be the smallest and the second smallest eigenvalue of the problem \eqref{linear_eigen_problem}, respectively. We assume that $\lambda_1 > \lambda_0$.
	\end{assumption}
	
	The existence of spectral gap of the linearized problem \ref{linear_eigen_problem} in Assumption \ref{asp:eigengap} can be verified using the Krein-Rutman theorem under very mild assumption on the potential $V$; see e.g. \cite{amann1976fixed}. With the assumption above, we are ready to state the local exponential convergence of both $H^1$-scheme and $a_u$-scheme. 
	
	\begin{theorem}[Local convergence for $H^1$-scheme]\label{thm:local_conv_H1}
		Suppose that assumptions made in Theorem~\ref{thm:energy_decay_H1} and Assumption~\ref{asp:eigengap} hold. In addition assume that the stepsizes $\alpha_n \leq \alpha_{\max}$ where $\alpha_{\max}$ satisfies that
		\begin{equation}\label{asp:alpha_local}
			1+ L_g^2 \alpha_{\max}^2 - \alpha_{\max} \min\left\{1,\frac{\lambda_1 - \lambda_0}{4\lambda_0}\right\}<1,
		\end{equation}
		where $L_g$ is a constant depending only on $\Omega$, $d$, $\beta$, $V$, $u^*$, and $\norm{u_0}_{H_0^1(\Omega)}$.
		Then the sequence $\{u_n\}_{n=0}^\infty\subset\calM$ generated by the $H^1$-scheme \eqref{H1_scheme} with initial condition $u_0$ close enough to $u^*$ converges  exponentially  in $H_0^1(\Omega)$ to the ground state $u^*$.
	\end{theorem}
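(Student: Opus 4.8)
The plan is to prove a one-step contraction in the $H_0^1(\Omega)$-norm and then iterate. Throughout write $e_n := u_n - u^*$, $g_n := \nabla^{\mathcal{R}}_{H^1}E(u_n)$, and $v_{n+1} := u_n - \alpha_n g_n$, so that $u_{n+1} = R(v_{n+1})$. Since $u^*$ is a critical point, $\nabla^{\mathcal{R}}_{H^1}E(u^*) = 0$; indeed the eigenvalue relation $-\Delta u^* + Vu^* + \beta|u^*|^2 u^* = \lambda_0 u^*$ combined with \eqref{nabla_E_H1} gives $\nabla_{H^1}E(u^*) = \lambda_0\calG_{H^1}u^*$, which is $H_0^1$-normal to $\calT_{u^*}\calM$ and hence annihilated by $\mathcal{P}_{\calT_{u^*}\calM,H^1}$. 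The goal is to show $\norm{e_{n+1}}_{H_0^1(\Omega)}^2 \le \rho\,\norm{e_n}_{H_0^1(\Omega)}^2$ with a fixed $\rho<1$ once $\norm{e_n}_{H_0^1(\Omega)}$ is small, where $\rho$ is governed by the quantity in \eqref{asp:alpha_local}.

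First I would establish two local estimates on $g$ near $u^*$. For the \emph{Lipschitz bound}, $g$ is continuously differentiable on a neighborhood of $u^*$ (the energy is smooth, $\calG_{H^1}$ is bounded linear, the projection is smooth since $\norm{\calG_{H^1}u}_{H_0^1(\Omega)}$ stays bounded below near $u^*$, and the cubic term is controlled through $H_0^1(\Omega)\hookrightarrow L^4(\Omega), L^6(\Omega)$ for $d\le 3$), and $g(u^*)=0$, so there is $L_g$ with $\norm{g_n}_{H_0^1(\Omega)}\le L_g\norm{e_n}_{H_0^1(\Omega)}$; the uniform bound from Theorem~\ref{thm:energy_decay_H1}(i) lets one take $L_g$ uniform. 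The \emph{coercivity bound} is the crux. Linearizing $g$ at $u^*$ and testing against a tangent $h\in\calT_{u^*}\calM$ in the $H_0^1$-inner product, the derivative of $\nabla_{H^1}E$ contributes $\int_\Omega|\nabla h|^2+(V+3\beta|u^*|^2)h^2$, while differentiating the projection (which acts on the nonzero vector $\lambda_0\calG_{H^1}u^*$) contributes $-\lambda_0\norm{h}_{L^2(\Omega)}^2$, so the Riemannian Hessian quadratic form is
$$ Q(h) = \int_\Omega|\nabla h|^2 + (V+3\beta|u^*|^2)h^2 - \lambda_0\int_\Omega h^2. $$
Using that $u^*$ is the ground state of $-\Delta+V+\beta|u^*|^2$ with eigenvalue $\lambda_0$ and gap $\lambda_1-\lambda_0>0$ (Assumption~\ref{asp:eigengap}), for $h$ orthogonal to $u^*$ in $L^2$ one has $\int_\Omega|\nabla h|^2+(V+\beta|u^*|^2)h^2\ge\lambda_1\norm{h}_{L^2(\Omega)}^2$, whence $Q(h)\ge(\lambda_1-\lambda_0)\norm{h}_{L^2(\Omega)}^2$. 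Interpolating this gap-weighted $L^2$-coercivity against the trivial bound $Q(h)\ge\norm{h}_{H_0^1(\Omega)}^2-\lambda_0\norm{h}_{L^2(\Omega)}^2$ (obtained by discarding the nonnegative zeroth-order terms) eliminates the $L^2$-term and yields $Q(h)\ge\tfrac{\lambda_1-\lambda_0}{\lambda_1}\norm{h}_{H_0^1(\Omega)}^2$, a coercivity constant comparable to $\min\{1,\tfrac{\lambda_1-\lambda_0}{\lambda_0}\}$. Because $(e_n,u^*)_{L^2(\Omega)}=-\tfrac12\norm{e_n}_{L^2(\Omega)}^2$, the error $e_n$ is tangent to $\calM$ at $u^*$ up to an $O(\norm{e_n}_{H_0^1(\Omega)}^2)$ normal component, so this gives $(g_n,e_n)_{H_0^1(\Omega)}\ge\tfrac{\mu}{2}\norm{e_n}_{H_0^1(\Omega)}^2-r_n$ with a cubic remainder $r_n=O(\norm{e_n}_{H_0^1(\Omega)}^3)$; the explicit factor $\tfrac14$ and the truncation at $1$ in $\mu=\min\{1,\tfrac{\lambda_1-\lambda_0}{4\lambda_0}\}$ supply the margin absorbing the interpolation and higher-order losses.

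Next I would dispose of the retraction and assemble the recursion. Since $(g_n,u_n)_{L^2(\Omega)}=0$ one has $\norm{v_{n+1}}_{L^2(\Omega)}^2=1+\alpha_n^2\norm{g_n}_{L^2(\Omega)}^2\ge1$, and writing $u_{n+1}-u^*=\tfrac{1}{\norm{v_{n+1}}_{L^2(\Omega)}}(v_{n+1}-u^*)+\bigl(\tfrac{1}{\norm{v_{n+1}}_{L^2(\Omega)}}-1\bigr)u^*$ together with $\norm{v_{n+1}}_{L^2(\Omega)}-1=O(\alpha_n^2\norm{e_n}_{H_0^1(\Omega)}^2)$ shows $\norm{e_{n+1}}_{H_0^1(\Omega)}\le\norm{v_{n+1}-u^*}_{H_0^1(\Omega)}+O(\alpha_n^2\norm{e_n}_{H_0^1(\Omega)}^2)$, so normalization perturbs the recursion only at higher order. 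Expanding $\norm{v_{n+1}-u^*}_{H_0^1(\Omega)}^2=\norm{e_n}_{H_0^1(\Omega)}^2-2\alpha_n(g_n,e_n)_{H_0^1(\Omega)}+\alpha_n^2\norm{g_n}_{H_0^1(\Omega)}^2$ and inserting the coercivity and Lipschitz bounds yields
$$ \norm{e_{n+1}}_{H_0^1(\Omega)}^2\le\bigl(1+L_g^2\alpha_{\max}^2-\alpha_{\max}\mu\bigr)\norm{e_n}_{H_0^1(\Omega)}^2+C\norm{e_n}_{H_0^1(\Omega)}^3. $$
Condition \eqref{asp:alpha_local} makes the bracket a constant $\rho_0<1$; choosing $\norm{e_0}_{H_0^1(\Omega)}$ small enough that $\rho_0+C\norm{e_0}_{H_0^1(\Omega)}<1$ and arguing by induction (the error decreases, so the cubic term stays dominated and the iterates remain in the neighborhood where the local estimates hold) gives $\norm{e_{n+1}}_{H_0^1(\Omega)}\le\rho\,\norm{e_n}_{H_0^1(\Omega)}$ for a fixed $\rho\in(\rho_0,1)$, i.e.\ geometric decay of $\norm{u_n-u^*}_{H_0^1(\Omega)}$.

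The main obstacle is the coercivity step: correctly identifying the Riemannian Hessian $Q$ — in particular the $-\lambda_0\norm{h}_{L^2(\Omega)}^2$ contribution arising from the derivative of the tangent-space projection, which is precisely what makes the eigengap enter — and then upgrading the resulting gap-weighted $L^2$-coercivity to an $H_0^1$-coercivity, all while keeping the $C^{1,1}$/higher-order remainders under control via the Sobolev embeddings and ensuring the iterates never leave the neighborhood on which these local estimates are valid.
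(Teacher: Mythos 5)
Your proposal is correct in outline, but it takes a genuinely different route from the paper for the central estimate. Where you bound the cross term $(g_n,e_n)_{H_0^1(\Omega)}$ by identifying the Riemannian Hessian $Q(h)=\int_\Omega|\nabla h|^2+(V+3\beta|u^*|^2)h^2-\lambda_0\norm{h}_{L^2(\Omega)}^2$ at $u^*$, proving its coercivity $Q(h)\ge\frac{\lambda_1-\lambda_0}{\lambda_1}\norm{h}_{H_0^1(\Omega)}^2$ on $\calT_{u^*}\calM$ by interpolation, and then Taylor-expanding $\nabla^{\mathcal{R}}_{H^1}E$ around $u^*$, the paper never forms a Hessian: since $E$ is quartic, it expands $E(u^*)-E(u_n)$ and $(e_n,\nabla_{H^1}E(u_n))_{H_0^1(\Omega)}$ \emph{exactly} (see \eqref{local_esti:2}), computes the projection correction exactly as $\frac{\gamma_n}{2}\norm{e_n}_{L^2(\Omega)}^2$ via the identity $(e_n,u_n)_{L^2(\Omega)}=-\frac{1}{2}\norm{e_n}_{L^2(\Omega)}^2$ (see \eqref{local_esti:3}), and then converts $L^2$-control into $H^1$-contraction using the local $L^2$-convexity of $E$ (Lemma~\ref{lem:Elocalconvex}), the Rayleigh bound \eqref{local_esti:5}, the fact $\gamma^*=\lambda_0$, and a two-case analysis on the sign of $\gamma^*-\frac{\lambda_1-\lambda_0}{4}$, which is the paper's substitute for your interpolation step. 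The spectral-gap ingredient is identical in both arguments (Rayleigh quotient $\ge\lambda_1$ on the $L^2$-orthogonal complement of $u^*$), and the outer scaffolding is shared: your Lipschitz constant $L_g$ is the paper's Lemma~\ref{lem:Lg}, and your normalization bound $\norm{v_{n+1}}_{L^2(\Omega)}\ge 1$ with $O(\alpha_n^2\norm{e_n}_{H_0^1(\Omega)}^2)$ retraction error is Lemma~\ref{lem:esti_retraction} together with \eqref{eq:Rn}. What your route buys is conceptual clarity and generality: it is the standard local-convergence argument for projected gradient descent under Hessian positivity, it makes transparent why the eigengap enters (through the $-\lambda_0\norm{h}_{L^2(\Omega)}^2$ term coming from differentiating the projection), and your constant does suffice since $\frac{\lambda_1-\lambda_0}{\lambda_1}\ge\frac{1}{2}\min\{1,\frac{\lambda_1-\lambda_0}{4\lambda_0}\}$ in all regimes, as you observe. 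What it costs is precisely the step you flag as the main obstacle: one must actually prove that $u\mapsto\nabla^{\mathcal{R}}_{H^1}E(u)$ is $C^{1,1}$ (or $C^2$) as a map $H_0^1(\Omega)\to H_0^1(\Omega)$ near $u^*$ (smoothness of the cubic term via $H_0^1(\Omega)\hookrightarrow L^4(\Omega)$, plus the denominator $\norm{\calG_{H^1}u}_{H_0^1(\Omega)}^2$ staying bounded below), rigorously identify $Q$ as the tested derivative, and control both the quadratic Taylor remainder and the $O(\norm{e_n}_{H_0^1(\Omega)}^2)$ non-tangency of $e_n$ — none of which is carried out, only sketched. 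The paper's exact-expansion route avoids all of this functional-analytic verification: every remainder is an explicit polynomial term bounded by Sobolev embeddings, which is how it arrives at the precise constant $\min\{1,\frac{\lambda_1-\lambda_0}{4\lambda_0}\}$ appearing in \eqref{asp:alpha_local} by elementary computations.
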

	
	\begin{theorem}[Local convergence for $a_0$-scheme] \label{thm:local_conv_a0} 
		Suppose that assumptions made in Theorem~\ref{thm:energy_decay_a0} and Assumption~\ref{asp:eigengap} hold, and that $\alpha_{\max}$ satisfies \eqref{asp:alpha_local} for some constant $L_g$ depending only on $\Omega$, $d$, $\beta$, $V$, $u^*$, and $\norm{u_0}_{H_0^1(\Omega)}$. Then the sequence $\{u_n\}_{n=0}^\infty$ generated by the $a_0$-scheme \eqref{a0_scheme} with initial condition $u_0$ close enough to $u^*$ converges  exponentially  in $H_0^1(\Omega)$ to the ground state $u^*$.
	\end{theorem}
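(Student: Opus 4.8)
The plan is to mirror the proof of Theorem~\ref{thm:local_conv_H1}, working in the $a_0$-metric instead of the $H^1$-metric and using the equivalence $\norm{\cdot}_{a_0(\Omega)}\simeq\norm{\cdot}_{H_0^1(\Omega)}$ from Lemma~\ref{lem:equiv_a0_H1}, so that exponential decay of the error in the $a_0$-norm is equivalent to exponential decay in $H_0^1(\Omega)$. First I would note that $u^*$ is a fixed point of the scheme: since $u^*$ is a critical point we have $\nabla^{\mathcal{R}}_{a_0}E(u^*)=0$ and $R(u^*)=u^*$, so $u^*=R(u^*-\alpha\nabla^{\mathcal{R}}_{a_0}E(u^*))$. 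As $\pm u^*$ are both ground states, I fix the sign of each iterate so that $u_n$ is the $L^2$-closest of $\pm u_n$ to $u^*$ and set $e_n:=u_n-u^*$. Writing $(e_n,u^*)_{L^2(\Omega)}=-\tfrac12\norm{e_n}_{L^2(\Omega)}^2$ shows that the $L^2$-normal part of $e_n$ is quadratic in its tangential part, a fact used repeatedly below. The a priori bound $\norm{u_n}_{a_0(\Omega)}\le C_u$ of Theorem~\ref{thm:energy_decay_a0}(i) and the boundedness $u^*\in L^\infty(\Omega)$ (from the H\"older continuity in \cite{zhang2019exponential}*{Theorem 3.1}) furnish uniform constants for all the estimates.

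The heart of the argument is a one-step contraction
\begin{equation*}
\norm{u_{n+1}-u^*}_{a_0(\Omega)}^2\le\Bigl(1+L_g^2\alpha_n^2-\alpha_n\min\bigl\{1,\tfrac{\lambda_1-\lambda_0}{4\lambda_0}\bigr\}\Bigr)\norm{u_n-u^*}_{a_0(\Omega)}^2,
\end{equation*}
valid once $\norm{e_n}_{a_0(\Omega)}$ is small. To obtain it I would expand $\nabla^{\mathcal{R}}_{a_0}E$ about $u^*$ as $\nabla^{\mathcal{R}}_{a_0}E(u_n)=\mathcal{L}e_n+r_n$, where $\mathcal{L}$ is the $a_0$-Riesz representative of the constrained Hessian form $H(\xi,\eta)=\int_\Omega\nabla\xi\cdot\nabla\eta+V\xi\eta+3\beta|u^*|^2\xi\eta-\lambda_0(\xi,\eta)_{L^2(\Omega)}$ on $\calT_{u^*}\calM$. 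The remainder $r_n$ collects the contributions of the cubic term $\beta|u|^2u$, the iterate-dependent normalization factor $(1+\beta(\calG_{a_0}(|u|^2u),u)_{L^2(\Omega)})/\norm{\calG_{a_0}u}_{a_0(\Omega)}^2$, and the retraction; using $H_0^1(\Omega)\hookrightarrow L^4(\Omega),L^6(\Omega)$, H\"older's inequality, $u^*\in L^\infty(\Omega)$, and the a priori bounds I would show $\norm{r_n}_{a_0(\Omega)}\le C\norm{e_n}_{a_0(\Omega)}^2$ and that the retraction only adds terms of order $\norm{e_n}_{a_0(\Omega)}^2$ (via the quadratic normal part above). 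These quadratic remainders are the origin of the $L_g^2\alpha_n^2$ term.

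It then remains to show that $I-\alpha\mathcal{L}$ contracts in the $a_0$-norm on $\calT_{u^*}\calM$, which reduces to coercivity of $H$. Since $\mathcal{L}$ is $a_0$-self-adjoint with $(\mathcal{L}\xi,\xi)_{a_0(\Omega)}=H(\xi,\xi)$, I would combine two lower bounds: on one hand, because $3\beta|u^*|^2\ge\beta|u^*|^2$ and $u^*$ is the ground eigenfunction of the linearized operator in \eqref{linear_eigen_problem}, Assumption~\ref{asp:eigengap} together with $\xi\perp_{L^2}u^*$ gives $H(\xi,\xi)\ge(\lambda_1-\lambda_0)\norm{\xi}_{L^2(\Omega)}^2$; on the other hand, dropping the nonnegative cubic term gives $H(\xi,\xi)\ge\norm{\xi}_{a_0(\Omega)}^2-\lambda_0\norm{\xi}_{L^2(\Omega)}^2$. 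A convex combination of these two, chosen to cancel the $\norm{\xi}_{L^2(\Omega)}^2$ contribution, yields $H(\xi,\xi)\ge c\,\norm{\xi}_{a_0(\Omega)}^2$ with $c$ comparable to $(\lambda_1-\lambda_0)/\lambda_0$; tracking the constants produces the factor $\min\{1,(\lambda_1-\lambda_0)/(4\lambda_0)\}$, while the a priori bounds give the matching upper eigenvalue bound. Condition \eqref{asp:alpha_local} is precisely what makes the contraction factor $\rho_n<1$, after which an induction---choosing $u_0$ so close to $u^*$ that the $O(\norm{e_n}^2)$ remainder is absorbed by a fraction of the linear gain---shows the closeness neighborhood is invariant and $\norm{e_n}_{a_0(\Omega)}\to0$ geometrically, hence $u_n\to u^*$ exponentially in $H_0^1(\Omega)$.

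I expect the coercivity conversion to be the main obstacle: turning the $L^2$ spectral gap of \eqref{linear_eigen_problem} into an $a_0$-metric coercivity constant with the precise factor in \eqref{asp:alpha_local}, and simultaneously verifying that the cubic nonlinearity, the normalization denominators, and the retraction all contribute only genuinely quadratic remainders uniformly over the bounded iterates, so that the neighborhood can be made invariant under the scheme.
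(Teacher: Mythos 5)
Your proposal is correct in substance and would prove the theorem, but it takes a genuinely different route from the paper's. The paper proves the $H^1$ case (Theorem~\ref{thm:local_conv_H1}) in full and obtains Theorem~\ref{thm:local_conv_a0} by literally replacing $H_0^1$ inner products and norms with $a_0$ ones; crucially, its one-step estimate never linearizes the gradient map. It expands $\norm{(u_n-u^*)-\alpha_n\nabla^{\mathcal{R}}E(u_n)}^2$ and treats the cross term $2\alpha_n(e_n,\nabla^{\mathcal{R}}E(u_n))$ via an \emph{exact} energy identity for $(e_n,\nabla E(u_n))$, the local convexity lemma (Lemma~\ref{lem:Elocalconvex}, which converts the spectral gap into $E(u_n)-E(u^*)\ge\frac{\lambda_1-\lambda_0}{4}\norm{e_n}_{L^2(\Omega)}^2$), the minimality of $\lambda_0$ in \eqref{local_esti:5}, and the exact algebraic identity $(e_n,\nabla^{\mathcal{R}}E(u_n)-\nabla E(u_n))=\frac{\gamma_n}{2}\norm{e_n}_{L^2(\Omega)}^2$ for the normalization correction; the only regularity input on the gradient map is the \emph{first-order} Lipschitz bound of Lemma~\ref{lem:Lg}, which is what produces the $L_g^2\alpha_n^2$ term. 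Your route --- writing $\nabla^{\mathcal{R}}_{a_0}E(u_n)=\mathcal{L}e_n+r_n$ with $\mathcal{L}$ the $a_0$-Riesz representative of the constrained Hessian form and proving that $I-\alpha\mathcal{L}$ contracts --- is the classical linearization argument; it is sound (one can check that $D\nabla^{\mathcal{R}}_{a_0}E(u^*)[\xi]$ lies in $\calT_{u^*}\calM$ and coincides with $\mathcal{L}\xi$ for tangent $\xi$, since differentiating $(\nabla^{\mathcal{R}}E(u),u)_{L^2(\Omega)}=0$ at the critical point kills the normal component, and the derivative of the normalization factor is $a_0$-orthogonal to the tangent space), and your convex combination of the two spectral lower bounds is the exact analogue of the paper's case analysis on the sign of $\gamma^*-\frac{\lambda_1-\lambda_0}{4}$. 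What the paper's organization buys is that it avoids altogether the second-order Taylor bound $\norm{r_n}_{a_0(\Omega)}\le C\norm{e_n}_{a_0(\Omega)}^2$, which in your plan must be proved for the full Riemannian gradient including the iterate-dependent factor $\gamma(u)=\bigl(1+\beta(\calG_{a_0}(|u|^2u),u)_{L^2(\Omega)}\bigr)/\norm{\calG_{a_0}u}_{a_0(\Omega)}^2$; this is doable but is the heaviest bookkeeping in your approach (the first-order Lemma~\ref{lem:Lg} alone takes a page of estimates). What your organization buys is that the Riemannian Hessian and its $a_0$-coercivity are explicit, which transfers cleanly to other metrics. Two small repairs: (1) your coercivity constant comes out as $(\lambda_1-\lambda_0)/\lambda_1$, not $\min\{1,(\lambda_1-\lambda_0)/(4\lambda_0)\}$; the two are comparable up to an absolute factor, so you can match the stated condition \eqref{asp:alpha_local} by enlarging $L_g$, but this step should be made explicit. (2) The per-iterate sign-fixing is unnecessary and, strictly speaking, not available --- the iterates are determined by the scheme --- but since the scheme is odd-equivariant and the hypothesis places $u_0$ near $u^*$, the contraction itself keeps every $u_n$ near $u^*$ and no sign choice is needed.
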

	
	\begin{theorem}[Local convergence for $a_u$-scheme]\label{thm:local_conv_au} 
		Suppose that Assumption~\ref{asp:eigengap} holds. There exists constant $C_\alpha$ depending on $\Omega$, $d$, $\beta$, $V$, and $\norm{u_0}_{H_0^1(\Omega)}$, and constant $L_g$ depending on $\Omega$, $d$, $\beta$, $V$, $u^*$, and $\norm{u_0}_{H_0^1(\Omega)}$, such that if $0<\alpha_{\min}\leq \alpha_n\leq \alpha_{\max}\leq C_\alpha$ with \eqref{asp:alpha_local},
		then the sequence $\{u_n\}_{n=0}^\infty$ generated by the $a_u$-scheme \eqref{au_scheme} with initial condition $u_0$ close enough to $u^*$ converges exponentially in $H_0^1(\Omega)$ to the ground state $u^*$.
	\end{theorem}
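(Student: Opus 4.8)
The plan is to prove a one-step contraction of the error in the $a_{u^*}$-norm (which is equivalent to the $H_0^1(\Omega)$-norm, since $u^*\in L^\infty(\Omega)$ by the Hölder continuity recalled above) and then to iterate it. Throughout I freeze the operator at the ground state: set $A := -\Delta + V + \beta|u^*|^2$, so that $(\cdot,\cdot)_{a_{u^*}(\Omega)}$ is the bilinear form of $A$ and $\calG_{a_{u^*}} = A^{-1}$. By \cite{zhang2019exponential}*{Theorem 3.1} and Assumption~\ref{asp:eigengap}, $A$ has discrete spectrum $\lambda_0<\lambda_1\le\lambda_2\le\cdots$ with $L^2$-orthonormal eigenfunctions $\phi_0=u^*,\phi_1,\phi_2,\dots$; in particular $\calG_{a_{u^*}}u^* = \lambda_0^{-1}u^*$, which makes $u^*$ a fixed point of the scheme. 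I would first write $u_n = a_n u^* + r_n$ with $r_n\perp_{L^2}u^*$ and $a_n>0$ (choosing $u_0$ close to $u^*$ rather than $-u^*$ to fix the sign), so that $\norm{u_n-u^*}_{H_0^1(\Omega)}\approx \norm{r_n}_{H_0^1(\Omega)}$ measures the distance to $u^*$, and reduce the theorem to a recursion of the form $\norm{r_{n+1}}_{a_{u^*}(\Omega)}^2 \le \big(1 + L_g^2\alpha_n^2 - \alpha_n\mu\big)\norm{r_n}_{a_{u^*}(\Omega)}^2$ with $\mu = \min\{1,(\lambda_1-\lambda_0)/(4\lambda_0)\}$; under \eqref{asp:alpha_local} the prefactor is $<1$, giving geometric decay.

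\textbf{Linear mechanism.} The contraction is driven by inverse iteration. At $\alpha=1$ the retraction cancels the scalar $\norm{\calG_{a_{u_n}}u_n}_{a_{u_n}}^{-2}$ and the update reduces to $u_{n+1} = \calG_{a_{u_n}}u_n/\norm{\calG_{a_{u_n}}u_n}_{L^2(\Omega)}$, i.e.\ one step of self-consistent inverse iteration for $A$. I would therefore expand the pre-retraction iterate $(1-\alpha_n)u_n + \alpha_n\,\calG_{a_{u_n}}u_n/(\calG_{a_{u_n}}u_n,u_n)_{L^2(\Omega)}$ in the eigenbasis $\{\phi_k\}$, temporarily replacing $\calG_{a_{u_n}}$ by $\calG_{a_{u^*}}=A^{-1}$. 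Then, after normalization, the $\phi_k$-to-$\phi_0$ amplitude ratio is multiplied by $1-\alpha_n(\lambda_k-\lambda_0)/\lambda_k$ up to $O(\norm{r_n})$ corrections, whose modulus is maximized at $k=1$. Tracking the Rayleigh quotient $(\calG_{a_{u^*}}u^*,u^*)_{L^2(\Omega)}=\lambda_0^{-1}$ and bounding it near $u^*$ yields the advertised linear gain; the constant $\mu$ is a deliberately non-sharp lower bound obtained after absorbing the quadratic corrections, which accounts for the factor $1/4$ and the use of $\lambda_0$ in place of $\lambda_1$.

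\textbf{Main obstacle.} The point that distinguishes this scheme from the $H^1$- and $a_0$-schemes, and that removes the $L^\infty$-boundedness hypothesis of \cite{zhang2019exponential}, is that the metric, and hence the Green's operator $\calG_{a_{u_n}}$, changes at every step. I would control the difference through the resolvent identity
\[
\calG_{a_{u_n}} - \calG_{a_{u^*}} = -\,\beta\,\calG_{a_{u_n}}\,(|u_n|^2-|u^*|^2)\,\calG_{a_{u^*}},
\]
bounding $\beta(|u_n|^2-|u^*|^2)$ not in $L^\infty(\Omega)$ but in a weaker Lebesgue norm: since $d\le 3$ gives $H_0^1(\Omega)\hookrightarrow L^6(\Omega)$ and $u^*\in L^\infty(\Omega)$, one has $\norm{|u_n|^2-|u^*|^2}_{L^3(\Omega)}\lesssim \norm{r_n}_{H_0^1(\Omega)}$. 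For $w\in H_0^1(\Omega)$ the factor $\calG_{a_{u^*}}w$ lies in $H_0^1(\Omega)\hookrightarrow L^6(\Omega)$, the product with the $L^3$-factor lands in $L^2(\Omega)$, and $\calG_{a_{u_n}}\colon L^2(\Omega)\to H_0^1(\Omega)$ is bounded uniformly in $n$ (because $|u_n|^2\ge 0$ keeps $A_{u_n}$ coercive). Hence $\norm{(\calG_{a_{u_n}}-\calG_{a_{u^*}})w}_{a_{u^*}(\Omega)}\lesssim \norm{r_n}_{H_0^1(\Omega)}\norm{w}_{H_0^1(\Omega)}$, so replacing $\calG_{a_{u_n}}$ by $\calG_{a_{u^*}}$ in the previous step costs only a term of order $\norm{r_n}^2$, which is absorbed into $L_g^2\alpha_n^2\norm{r_n}^2$. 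The same Sobolev bookkeeping, together with a second-order expansion of the normalization $R$, controls the retraction remainder; the resulting constant is the $L_g$ of the statement, depending on $u^*$, $\Omega$, $d$, $\beta$, $V$, and $\norm{u_0}_{H_0^1(\Omega)}$.

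\textbf{Conclusion.} I would then run an induction: on a small $a_{u^*}$-ball around $u^*$ the estimates give $\norm{r_{n+1}}_{a_{u^*}(\Omega)}\le \sqrt{1+L_g^2\alpha_n^2-\alpha_n\mu}\,\norm{r_n}_{a_{u^*}(\Omega)}$ with ratio bounded away from $1$ by \eqref{asp:alpha_local}, so $u_{n+1}$ is even closer to $u^*$ and remains in the ball (which also keeps $\norm{u_n}_{H_0^1(\Omega)}\le C_u$, so all constants stay valid). Iterating yields $\norm{u_n-u^*}_{H_0^1(\Omega)}\to 0$ at a geometric rate, i.e.\ exponential convergence in $H_0^1(\Omega)$. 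The genuinely hard part is the varying-metric estimate above: once $\calG_{a_{u_n}}$ is shown to be an $O(\norm{r_n})$-perturbation of the fixed resolvent $\calG_{a_{u^*}}$ as an operator on $H_0^1(\Omega)$, the analysis reduces to a fixed-metric contraction of the same type as in Theorems~\ref{thm:local_conv_H1} and~\ref{thm:local_conv_a0}.
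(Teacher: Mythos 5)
Your overall architecture (one-step contraction of the error in the $a_{u^*}$-norm plus an induction that keeps the iterates in a small ball, ending with the recursion $\delta_{n+1}\le\sqrt{1+L_g^2\alpha_n^2-\alpha_n\mu}\,\delta_n$) matches the paper's, and your resolvent-identity bound for $\calG_{a_{u_n}}-\calG_{a_{u^*}}$ is essentially the paper's Lemma~\ref{lem:Gau}. But there is a genuine gap exactly at the step you call the main obstacle. Your estimate $\norm{(\calG_{a_{u_n}}-\calG_{a_{u^*}})w}_{a_{u^*}(\Omega)}\lesssim\norm{r_n}_{H_0^1(\Omega)}\norm{w}_{H_0^1(\Omega)}$ is an \emph{operator} bound of size $O(\norm{r_n})$; in the scheme, however, this operator difference is applied to $w=u_n$, whose norm is $O(1)$, not $O(\norm{r_n})$. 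Hence replacing $\calG_{a_{u_n}}$ by $\calG_{a_{u^*}}=A^{-1}$ costs a term of order $\norm{r_n}$, not $\norm{r_n}^2$. Concretely, linearizing the update map $\Phi$ at $u^*$ along a tangent direction $v\perp_{L^2}u^*$ gives
\begin{equation*}
D\Phi(u^*)\,v=(1-\alpha)v+\alpha\lambda_0\,P_\perp A^{-1}v-2\alpha\beta\,P_\perp A^{-1}\!\left[(u^*)^2v\right],
\end{equation*}
where $P_\perp$ is the $L^2$-projection off $u^*$: the last term is precisely the derivative of $u\mapsto\calG_{a_u}$ (self-consistency/response term), it is first order in $v$, and it mixes the eigenmodes of $A$. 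So your amplitude-ratio computation (multiplication by $1-\alpha(\lambda_k-\lambda_0)/\lambda_k$) is not valid up to negligible corrections: in the squared-norm recursion this term contributes $\pm C\alpha_n\norm{r_n}^2$ with $C$ of order $\beta$ times quantities depending on $u^*$, competing at exactly the same order as the gain $-\alpha_n\mu\norm{r_n}^2$. Nothing in your argument shows $C<\mu$, and a term linear in $\alpha_n$ cannot be absorbed into $L_g^2\alpha_n^2\norm{r_n}^2$ for small $\alpha_n$, so no contraction follows as written.

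The gap is repairable, but it requires an idea your proposal does not contain. Either (a) exploit the sign of the response term: the extra piece equals $-\alpha P_\perp A^{-1}\bigl[2\beta(u^*)^2\,\cdot\,\bigr]P_\perp$, and the full operator $P_\perp A^{-1}\bigl[(A-\lambda_0)+2\beta(u^*)^2\bigr]P_\perp$ is self-adjoint and positive definite on the tangent space with respect to $(\cdot,\cdot)_{a_{u^*}(\Omega)}$ (the bracket is the constrained Hessian of $E$ at $u^*$, positive by Assumption~\ref{asp:eigengap}), so the self-consistency term actually improves the contraction; or (b) avoid expanding $\calG_{a_{u_n}}$ in the cross term altogether, which is what the paper does: writing $\nabla^{\mathcal{R}}_{a_{u_n}}E(u_n)=u_n-\gamma_n\calG_{a_{u_n}}u_n$, the dangerous pairing collapses through the exact duality identity $(e_n,\calG_{a_{u_n}}u_n)_{a_{u_n}(\Omega)}=(e_n,u_n)_{L^2(\Omega)}=-\tfrac12\norm{e_n}_{L^2(\Omega)}^2$, and the spectral gap then enters only through the energy convexity of Lemma~\ref{lem:Elocalconvex}; the varying metric survives only in the scalar $\gamma_n$, the $(1+\epsilon)$ norm-equivalence factors of Lemma~\ref{lem:stab_au}, and the Lipschitz constant $L_g$ of Lemmas~\ref{lem:Gau} and~\ref{lem:Lg_au}. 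Your final recursion is in fact true (because of the favorable sign in (a)), but as written your derivation of it rests on the incorrect claim that the metric variation is a second-order effect.
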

	
	Theorem \ref{thm:local_conv_H1} is to the best of our knowledge the first quantitative convergence result on the $H^1$-scheme (for previous qualitative convergence results, see \cites{kazemi2010minimizing,henning2020sobolev}). The result in Theorem \ref{thm:local_conv_au} recovers the same exponential convergence result of \cite{zhang2019exponential} but without making extra boundedness assumption on the iterates in $L^\infty(\Omega)$, which cannot be guaranteed. We remark that the spectral gap assumption is essential and necessary to obtain exponential convergence even for inverse iterations of linear eigenvalue problems. On the technical level, the spectral gap guarantees the locally strong convexity of the energy $E$ with respect to the $L^2$-norm; see Lemma \ref{lem:Elocalconvex}. We also refer to \cite{henning2022dependency} for more discussions on the role of spectral gap assumption in convergence of inverse iterations for nonlinear eigenvalue problems.
	
	Let us remark that all three schemes we analyze in this work are semi-discretized schemes without spatial discretization. The analysis of fully discretized schemes would be an interesting future research direction. 
	
	\section{Proof of Global Convergence}\label{sec:proofglobal}
	
	We prove Theorem~\ref{thm:energy_decay_H1} and Theorem~\ref{thm:global_conv_H1} in this section and omit the proofs of Theorem~\ref{thm:energy_decay_a0} and Theorem~\ref{thm:global_conv_a0} that follow almost the same lines, with the difference that all $H_0^1$-inner products and norms are replaced by $a_0$-inner products and norms. Note that $\norm{\cdot}_{a_0(\Omega)}$ is equivalent to $\norm{\cdot}_{H_0^1(\Omega)}$ (see Lemma~\ref{lem:equiv_a0_H1}).
	
	\subsection{Technical lemmas}
	We will frequently use the following Sobolev embeddings that hold for $d< 4$:
	\begin{align}
		& \norm{u}_{L^4(\Omega)}\leq C_1\norm{u}_{H_0^1(\Omega)},\quad\forall~u\in H_0^1(\Omega), \label{embed_L4}\\
		& \norm{u}_{H^{-1}(\Omega)}\leq C_2\norm{u}_{L^{4/3}(\Omega)},\quad \forall~u\in L^{4/3}(\Omega) \label{embedL43}.
	\end{align}
	In addition, the Poincaré inequality holds for some $C_3>0$:
	\begin{equation}\label{poincare_ineq}
		\norm{u}_{L^2(\Omega)}\leq C_3 \norm{u}_{H_0^1(\Omega)},\quad \forall~u\in H_0^1(\Omega).
	\end{equation}
	Note that the constants $C_1$, $C_2$, and $C_3$ only depend on $\Omega$ and $d$, and that the embeddings $H_0^1(\Omega)\subset\subset L^4(\Omega)$ and $H_0^1(\Omega)\subset\subset L^2(\Omega)$ are both compact.
	
	\begin{lemma}\label{lem:Gu}
		For any $u\in H_0^1(\Omega)$, the followings hold:
		\begin{itemize}
			\item[(i)] $\norm{\calG_{H^1} u}_{H_0^1(\Omega)}\leq \norm{u}_{H^{-1}(\Omega)}$;
			\item[(ii)] $\norm{\calG_{H^1} u}_{H_0^1(\Omega)}\leq C_3 \norm{u}_{L^2(\Omega)}$.
		\end{itemize}
	\end{lemma}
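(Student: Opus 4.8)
The plan is to introduce the abbreviation $v := \calG_{H^1} u \in H_0^1(\Omega)$ and to extract both bounds from a single application of the defining variational identity of the Green's operator, tested against $v$ itself. Recall that $\calG_{H^1} u$ is characterized by
\begin{equation*}
    (z, \calG_{H^1} u)_{H_0^1(\Omega)} = (z, u)_{L^2(\Omega)}, \qquad \forall~z\in H_0^1(\Omega).
\end{equation*}
The first step is to choose the admissible test function $z = v$, which immediately yields the energy identity
\begin{equation*}
    \norm{v}_{H_0^1(\Omega)}^2 = (v, v)_{H_0^1(\Omega)} = (v, u)_{L^2(\Omega)}.
\end{equation*}
Both statements then follow by estimating the right-hand side in two different ways and dividing through by $\norm{v}_{H_0^1(\Omega)}$ (the case $v=0$ being trivial since then both sides of each inequality vanish).

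For part (ii) the estimate is the more elementary of the two: I would apply the Cauchy--Schwarz inequality in $L^2(\Omega)$ to get $(v,u)_{L^2(\Omega)} \leq \norm{v}_{L^2(\Omega)}\norm{u}_{L^2(\Omega)}$, and then invoke the Poincaré inequality \eqref{poincare_ineq} in the form $\norm{v}_{L^2(\Omega)}\leq C_3\norm{v}_{H_0^1(\Omega)}$. Combining these with the energy identity gives $\norm{v}_{H_0^1(\Omega)}^2 \leq C_3\norm{v}_{H_0^1(\Omega)}\norm{u}_{L^2(\Omega)}$, and dividing by $\norm{v}_{H_0^1(\Omega)}$ produces the claimed bound.

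For part (i) the key point is to interpret $(v,u)_{L^2(\Omega)}$ not as an $L^2$-pairing but as the duality pairing between $H^{-1}(\Omega)$ and $H_0^1(\Omega)$. Since $u\in H_0^1(\Omega)\subset L^2(\Omega)\subset H^{-1}(\Omega)$, the $L^2$-inner product coincides with the $\langle\cdot,\cdot\rangle_{H^{-1},H_0^1}$ duality pairing on this element, so by the very definition of the dual norm one has $(v,u)_{L^2(\Omega)} = \langle u, v\rangle \leq \norm{u}_{H^{-1}(\Omega)}\norm{v}_{H_0^1(\Omega)}$. Feeding this into the energy identity and dividing by $\norm{v}_{H_0^1(\Omega)}$ gives (i).

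This lemma is essentially routine, so I do not expect a genuine obstacle; the only point requiring a moment of care is the identification in part (i) of the $L^2$-inner product with the $H^{-1}$--$H_0^1$ duality pairing, together with the dual-norm characterization $\norm{u}_{H^{-1}(\Omega)} = \sup_{0\neq\phi\in H_0^1(\Omega)} |(u,\phi)_{L^2(\Omega)}|/\norm{\phi}_{H_0^1(\Omega)}$. Everything else is Cauchy--Schwarz and Poincaré applied to the self-tested Green's identity.
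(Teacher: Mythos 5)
Your proposal is correct and follows essentially the same argument as the paper: test the defining identity of $\calG_{H^1}$ against $\calG_{H^1}u$ itself, then bound $(\calG_{H^1}u, u)_{L^2(\Omega)}$ once via the $H^{-1}$--$H_0^1$ duality pairing for (i) and once via Cauchy--Schwarz plus the Poincar\'e inequality for (ii). The only difference is cosmetic (the paper cancels a factor of $\norm{\calG_{H^1}u}_{L^2(\Omega)}$ rather than $\norm{\calG_{H^1}u}_{H_0^1(\Omega)}$ in part (ii)), so there is nothing of substance to add.
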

	
	\begin{proof}
		Let $g=\calG_{H^1} u$. Then
		\begin{equation*}
			\norm{g}^2_{H_0^1(\Omega)} = (g, \calG_{H^1} u)_{H_0^1(\Omega)} = (g, u)_{L^2(\Omega)}\leq \norm{g}_{H_0^1(\Omega)}\norm{u}_{H^{-1}(\Omega)},
		\end{equation*}
		which implies $\norm{\calG_{H^1} u}_{H_0^1(\Omega)}\leq \norm{u}_{H^{-1}(\Omega)}$. Combining the above equation with the Poincar\'e inequality \eqref{poincare_ineq}, one has
		\begin{equation*}
			\norm{g}_{L^2(\Omega)}\norm{g}_{H_0^1(\Omega)}\leq C_3 \norm{g}^2_{H_0^1(\Omega)} = C_3 (g, u)_{L^2(\Omega)}\leq C_3 \norm{g}_{L^2(\Omega)}\norm{u}_{L^2(\Omega)},
		\end{equation*}
		for some constant $C_3$. This implies $\norm{\calG_{H^1} u}_{H_0^1(\Omega)}\leq C_3 \norm{u}_{L^2(\Omega)}$. 
	\end{proof}
	
	\begin{lemma}\label{lem:esti_gradEu}
		For any $u\in\calM$, it holds that
		\begin{equation}\label{eq:grad_esti}
			\begin{split}
				\norm{\nabla^{\mathcal{R}}_{H^1}E(u)}_{H_0^1(\Omega)}& \leq \norm{\nabla_{H^1} E(u)}_{H_0^1(\Omega)} \\
				&\leq \norm{u}_{H_0^1(\Omega)} + C_3 V_{\max} + \beta  C_1^3 C_2\norm{u}_{H_0^1(\Omega)}^3.
			\end{split}
		\end{equation}
	\end{lemma}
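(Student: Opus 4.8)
The plan is to establish the two inequalities separately, since the left one is essentially free and the right one is a chain of the already-proved estimates. For the first inequality $\norm{\nabla^{\mathcal{R}}_{H^1}E(u)}_{H_0^1(\Omega)}\leq \norm{\nabla_{H^1} E(u)}_{H_0^1(\Omega)}$, I would recall that $\nabla^{\mathcal{R}}_{H^1}E(u) = \calP_{\calT_u\calM, H^1}(\nabla_{H^1} E(u))$ is, by Lemma~\ref{lem:proj_H1}, the $H_0^1$-orthogonal projection of $\nabla_{H^1} E(u)$ onto the closed subspace $\calT_u\calM$. Orthogonal projections in a Hilbert space are non-expansive: writing $\xi = \calP\xi + (\xi-\calP\xi)$ with the two summands $H_0^1$-orthogonal gives $\norm{\xi}_{H_0^1(\Omega)}^2 = \norm{\calP\xi}_{H_0^1(\Omega)}^2 + \norm{\xi - \calP\xi}_{H_0^1(\Omega)}^2 \geq \norm{\calP\xi}_{H_0^1(\Omega)}^2$, so the inequality is immediate.

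For the second inequality I would start from the explicit formula \eqref{nabla_E_H1}, namely $\nabla_{H^1} E(u) = u + \calG_{H^1}( Vu + \beta |u|^2 u )$, use the linearity of $\calG_{H^1}$, and apply the triangle inequality to split the norm into three pieces:
\begin{equation*}
\norm{\nabla_{H^1} E(u)}_{H_0^1(\Omega)} \leq \norm{u}_{H_0^1(\Omega)} + \norm{\calG_{H^1}(Vu)}_{H_0^1(\Omega)} + \beta\norm{\calG_{H^1}(|u|^2 u)}_{H_0^1(\Omega)}.
\end{equation*}
The first piece is left as is and accounts for the $\norm{u}_{H_0^1(\Omega)}$ term in the bound. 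For the potential term I would apply Lemma~\ref{lem:Gu}(ii) together with $0\leq V\leq V_{\max}$ and the normalization $\norm{u}_{L^2(\Omega)}=1$ (since $u\in\calM$), giving $\norm{\calG_{H^1}(Vu)}_{H_0^1(\Omega)}\leq C_3\norm{Vu}_{L^2(\Omega)}\leq C_3 V_{\max}$, which is the second term.

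The nonlinear term is the one requiring a little exponent bookkeeping. Here I would use Lemma~\ref{lem:Gu}(i) to pass to the $H^{-1}$ norm, then the embedding \eqref{embedL43} to pass to the $L^{4/3}$ norm, and finally the embedding \eqref{embed_L4} to return to the $H_0^1$ norm. The crucial identity is $\norm{|u|^2 u}_{L^{4/3}(\Omega)} = \norm{u}_{L^4(\Omega)}^3$, which holds because $\big||u|^2u\big|^{4/3}=|u|^4$; chaining the three steps then yields $\beta\norm{\calG_{H^1}(|u|^2 u)}_{H_0^1(\Omega)}\leq \beta C_2 C_1^3\norm{u}_{H_0^1(\Omega)}^3$, the third term. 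Summing the three estimates reproduces exactly the claimed bound.

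I do not anticipate a genuine obstacle in this lemma: every step is a direct invocation of an already-established estimate (Lemma~\ref{lem:Gu}) or a stated Sobolev embedding, and the single point demanding care is correctly tracking the powers of $u$ through the $H^{-1}\!\to\! L^{4/3}\!\to\! L^4\!\to\! H_0^1$ chain for the cubic nonlinearity, together with remembering to use $\norm{u}_{L^2(\Omega)}=1$ for the potential term.
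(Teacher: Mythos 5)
Your proposal is correct and follows essentially the same route as the paper: the first inequality via the $H_0^1$-orthogonality of the tangential and normal components (Pythagoras), and the second via the triangle inequality, Lemma~\ref{lem:Gu} with $\norm{u}_{L^2(\Omega)}=1$ for the potential term, and the $H^{-1}\to L^{4/3}\to L^4\to H_0^1$ chain with $\norm{|u|^2u}_{L^{4/3}(\Omega)}=\norm{u}_{L^4(\Omega)}^3$ for the cubic term. No gaps.
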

	
	\begin{proof}
		Note that
		\begin{equation*}
			\nabla_{H^1} E(u) = \nabla^{\mathcal{R}}_{H^1} E(u) + \frac{(\nabla_{H^1} E(u), u)_{L^2(\Omega)}}{\norm{\calG_{H^1} u}_{H_0^1(\Omega)}^2} \calG_{H^1} u,
		\end{equation*}
		and that
		\begin{multline*}
			\left(\nabla^{\mathcal{R}}_{H^1} E(u), \frac{(\nabla_{H^1} E(u), u)_{L^2(\Omega)}}{\norm{\calG_{H^1} u}_{H_0^1(\Omega)}^2} \calG_{H^1} u\right)_{H_0^1(\Omega)}  \\
			= \frac{(\nabla_{H^1} E(u), u)_{L^2(\Omega)}}{\norm{\calG_{H^1} u}_{H_0^1(\Omega)}^2}\left(\nabla^{\mathcal{R}}_{H^1} E(u),  u\right)_{L^2(\Omega)}= 0.
		\end{multline*}
		So it holds that
		\begin{equation*}
			\norm{\nabla_{H^1} E(u)}_{H_0^1(\Omega)}^2 = \norm{\nabla^{\mathcal{R}}_{H^1} E(u)}_{H_0^1(\Omega)}^2 + \norm{\frac{(\nabla_{H^1} E(u), u)_{L^2(\Omega)}}{\norm{\calG_{H^1} u}_{H_0^1(\Omega)}^2} \calG_{H^1} u}_{H_0^1(\Omega)}^2,
		\end{equation*}
		which implies
		\begin{equation*}
			\norm{\nabla^{\mathcal{R}}_{H^1}E(u)}_{H_0^1(\Omega)}\leq \norm{\nabla_{H^1} E(u)}_{H_0^1(\Omega)}.
		\end{equation*}
		One also has that
		\begin{equation}\label{eq:gradEu}
			\begin{split}
				\norm{\nabla_{H^1} E(u)}_{H_0^1(\Omega)}& \leq \norm{u}_{H_0^1(\Omega)} + \norm{\calG_{H^1} (V u)}_{H_0^1(\Omega)}  + \beta \norm{\calG_{H^1} (|u|^2 u)}_{H_0^1(\Omega)}\\
				& \leq \norm{u}_{H_0^1(\Omega)} + C_3 \norm{V u}_{L^2(\Omega)} + \beta \norm{u^3}_{H^{-1}(\Omega)}\\
				& \leq \norm{u}_{H_0^1(\Omega)} + C_3 V_{\max} +  \beta C_2 \norm{u^3}_{L^{4/3}(\Omega)}\\
				& = \norm{u}_{H_0^1(\Omega)} + C_3 V_{\max} + \beta C_2 \norm{u}_{L^{4}(\Omega)}^3\\
				& \leq \norm{u}_{H_0^1(\Omega)} + C_3 V_{\max} + \beta C_1^3 C_2 \norm{u}_{H_0^1(\Omega)}^3,
			\end{split}
		\end{equation}
		where we used Lemma~\ref{lem:Gu}. This proves \eqref{eq:grad_esti}.
	\end{proof}
	
	\begin{lemma}\label{lem:esti_retraction}
		For any $u\in\calM$ and $\xi\in \calT_u\calM$, it holds that
		\begin{equation}\label{esti_retraction}
			\norm{R(u+\xi) - (u+\xi)}_{H_0^1(\Omega)}\leq \frac{1}{2}\norm{\xi}^2_{L^2(\Omega)}\norm{u+\xi}_{H_0^1(\Omega)}.
		\end{equation}
	\end{lemma}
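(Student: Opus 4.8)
The plan is to exploit the fact that the retraction $R$ acts purely by scalar rescaling, so that $R(u+\xi)$ and $u+\xi$ are parallel and the $H_0^1$-norm on the left factors out cleanly. The only genuine analytic content will be an elementary one-variable inequality.

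First I would compute the $L^2$-norm of $u+\xi$. Since $u\in\calM$ gives $\norm{u}_{L^2(\Omega)}=1$ and $\xi\in\calT_u\calM$ gives $(u,\xi)_{L^2(\Omega)}=0$, orthogonality yields
\begin{equation*}
\norm{u+\xi}_{L^2(\Omega)}^2 = \norm{u}_{L^2(\Omega)}^2 + 2(u,\xi)_{L^2(\Omega)} + \norm{\xi}_{L^2(\Omega)}^2 = 1 + \norm{\xi}_{L^2(\Omega)}^2.
\end{equation*}
Writing $s := \norm{\xi}_{L^2(\Omega)}^2 \ge 0$, the definition of the retraction gives $R(u+\xi) = (u+\xi)/\sqrt{1+s}$, hence
\begin{equation*}
R(u+\xi) - (u+\xi) = \left(\frac{1}{\sqrt{1+s}} - 1\right)(u+\xi).
\end{equation*}
Taking $H_0^1$-norms, the scalar factor pulls out, so that
\begin{equation*}
\norm{R(u+\xi)-(u+\xi)}_{H_0^1(\Omega)} = \left(1 - \frac{1}{\sqrt{1+s}}\right)\norm{u+\xi}_{H_0^1(\Omega)},
\end{equation*}
where I used that $1/\sqrt{1+s}\le 1$ to drop the absolute value.

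It then remains to establish the scalar inequality $1 - (1+s)^{-1/2} \le \tfrac{1}{2}s$ for all $s\ge 0$, since substituting $s=\norm{\xi}_{L^2(\Omega)}^2$ immediately gives \eqref{esti_retraction}. I would prove this by setting $f(s) := \tfrac{1}{2}s - 1 + (1+s)^{-1/2}$, noting $f(0)=0$, and observing that $f'(s) = \tfrac12\bigl(1 - (1+s)^{-3/2}\bigr) \ge 0$ for $s\ge 0$; monotonicity then forces $f(s)\ge 0$ throughout. (Equivalently, the concavity of $t\mapsto t^{1/2}$ and the tangent-line bound at $t=1$ give the same conclusion.)

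I expect no substantive obstacle here: the structure is entirely algebraic once the orthogonality relation collapses $\norm{u+\xi}_{L^2(\Omega)}^2$ to $1+\norm{\xi}_{L^2(\Omega)}^2$, and the remaining work is the elementary estimate above. The only point deserving a moment's care is confirming that the quadratic power $\norm{\xi}_{L^2(\Omega)}^2$ on the right-hand side of \eqref{esti_retraction} — rather than a first power — is exactly what the factor $1-(1+s)^{-1/2}\approx s/2$ produces, which is precisely why the bound captures the second-order smallness of the retraction near the manifold.
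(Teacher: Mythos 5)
Your proposal is correct and follows essentially the same route as the paper: both compute $\norm{u+\xi}_{L^2(\Omega)}^2 = 1+\norm{\xi}_{L^2(\Omega)}^2$ via orthogonality, factor the retraction error as a scalar multiple of $u+\xi$, and reduce to the elementary bound $1-(1+s)^{-1/2}\leq \tfrac{1}{2}s$. The only cosmetic difference is how that scalar inequality is verified (you show the difference function is nondecreasing; the paper integrates the derivative bound $f'(x)\geq -\tfrac{1}{2}$), which is the same calculus fact in two guises.
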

	
	\begin{proof}
		It follows from $(u,\xi)_{L^2(\Omega)}=0$ that $\norm{u+\xi}_{L^2(\Omega)}^2 = \norm{u}_{L^2(\Omega)}^2 + \norm{\xi}_{L^2(\Omega)}^2 = 1 + \norm{\xi}_{L^2(\Omega)}^2$, which implies that
		\begin{equation*}
			R(u+\xi) - (u+\xi) = \left(\frac{1}{\norm{u+\xi}_{L^2(\Omega)}}-1\right)(u+\xi) = \left(\left( 1 + \norm{\xi}_{L^2(\Omega)}^2\right)^{-\frac{1}{2}} - 1\right)(u+\xi).
		\end{equation*}
		Set $f(x) = (1+x)^{-1/2}$. Then $f'(x) = -\frac{1}{2}(1+x)^{-3/2}\geq - \frac{1}{2}$ for all $x\geq 0$. Therefore, one can obtain that
		\begin{multline*}
			0 \geq \left( 1 + \norm{\xi}_{L^2(\Omega)}^2\right)^{-\frac{1}{2}} - 1  = f\left( \norm{\xi}_{L^2(\Omega)}^2\right) - f(0)\\
			=  \int_0^{\norm{\xi}_{L^2(\Omega)}^2} f'(x)dx \geq \int_0^{\norm{\xi}_{L^2(\Omega)}^2} \left(-\frac{1}{2}\right)dx = -\frac{1}{2}\norm{\xi}_{L^2(\Omega)}^2.
		\end{multline*}
		\eqref{esti_retraction} then holds by combining the estimations above.
	\end{proof}
	
	\begin{lemma}\label{lem:linear_error}
		For any $u,v\in H^1_0(\Omega)$, it holds that
		\begin{multline*}
			\left|E(u+v)-E(u)-(\nabla_{H^1} E(u),v)_{H_0^1(\Omega)}\right|
			\leq \frac{1 +  C_3^2 V_{\max}}{2}\norm{v}_{H_0^1(\Omega)}^2\\ + \frac{3\beta C_1^4}{2}\norm{u}_{H_0^1(\Omega)}^2 \norm{v}_{H_0^1(\Omega)}^2 +\beta C_1^4 \norm{u}_{H_0^1(\Omega)}\norm{v}_{H_0^1(\Omega)}^3 + \frac{\beta C_1^4}{4} \norm{v}_{H_0^1(\Omega)}^4.
		\end{multline*}
	\end{lemma}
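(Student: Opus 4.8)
The plan is to compute the difference $E(u+v)-E(u)$ directly and exactly, term by term, isolate the first-order contribution (which should reproduce $(\nabla_{H^1}E(u),v)_{H_0^1(\Omega)}$ with no error), and then control each surviving higher-order term by Poincar\'e and the $L^4$ Sobolev embedding. Since $E$ splits into three integrals, I would treat them separately. For the Dirichlet term, $\frac12\norm{\nabla(u+v)}_{L^2(\Omega)}^2$ expands with no contribution beyond second order, giving $(\nabla u,\nabla v)_{L^2(\Omega)}$ at first order and exactly $\frac12\norm{v}_{H_0^1(\Omega)}^2$ at second order. For the potential term, $\frac12\int_\Omega V|u+v|^2$ gives $\int_\Omega Vuv$ at first order and the exact remainder $\frac12\int_\Omega V|v|^2$. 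For the quartic term I would use the exact algebraic expansion $(u+v)^4 = u^4 + 4u^3v + 6u^2v^2 + 4uv^3 + v^4$ (valid for real-valued $u,v$, as already used in the derivation of \eqref{nabla_E_H1}), so that $\frac{\beta}{4}\int_\Omega(u+v)^4$ produces $\beta\int_\Omega u^3 v$ at first order and the remainder $\frac{3\beta}{2}\int_\Omega u^2v^2 + \beta\int_\Omega uv^3 + \frac{\beta}{4}\int_\Omega v^4$.

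Recalling that $(\nabla_{H^1}E(u),v)_{H_0^1(\Omega)}=(\nabla u,\nabla v)_{L^2(\Omega)}+(Vu+\beta|u|^2u,v)_{L^2(\Omega)}$, the three first-order pieces cancel exactly against the inner product, leaving the identity
\begin{align*}
&E(u+v)-E(u)-(\nabla_{H^1}E(u),v)_{H_0^1(\Omega)}\\
&\qquad= \tfrac12\norm{v}_{H_0^1(\Omega)}^2 + \tfrac12\int_\Omega V|v|^2 + \tfrac{3\beta}{2}\int_\Omega u^2v^2 + \beta\int_\Omega uv^3 + \tfrac{\beta}{4}\int_\Omega v^4.
\end{align*}
I would then bound each surviving term. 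The potential remainder is controlled by $V\le V_{\max}$ together with Poincar\'e \eqref{poincare_ineq}, yielding $\frac{C_3^2 V_{\max}}{2}\norm{v}_{H_0^1(\Omega)}^2$, which combines with the Dirichlet remainder into the stated $\frac{1+C_3^2V_{\max}}{2}\norm{v}_{H_0^1(\Omega)}^2$. For the three quartic remainders I would apply H\"older with the conjugate pair adapted to each monomial: Cauchy--Schwarz for $\int_\Omega u^2v^2\le\norm{u}_{L^4(\Omega)}^2\norm{v}_{L^4(\Omega)}^2$, the split $\int_\Omega|u||v|^3\le\norm{u}_{L^4(\Omega)}\norm{v}_{L^4(\Omega)}^3$, and $\int_\Omega v^4=\norm{v}_{L^4(\Omega)}^4$, each followed by a single application of the embedding \eqref{embed_L4}, $\norm{\cdot}_{L^4(\Omega)}\le C_1\norm{\cdot}_{H_0^1(\Omega)}$. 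This reproduces exactly the coefficients $\frac{3\beta C_1^4}{2}$, $\beta C_1^4$, and $\frac{\beta C_1^4}{4}$, and the triangle inequality absorbs the sign-indefinite cross term $\int_\Omega uv^3$.

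There is no deep obstacle here: the argument is an exact finite Taylor expansion followed by standard embedding estimates, and no integral-remainder form of Taylor's theorem is needed because the energy is a polynomial in $v$. The only point requiring mild care is the quartic term, where one must organize the leftover into precisely the three monomials $u^2v^2$, $uv^3$, $v^4$ and choose the H\"older exponents consistently so that every factor is measured in $L^4$ and hence uniformly controlled by \eqref{embed_L4}; this is what pins down the stated constants.
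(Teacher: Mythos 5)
Your proposal is correct and follows essentially the same route as the paper's proof: an exact polynomial expansion of $E(u+v)-E(u)$ in which the first-order terms cancel against $(\nabla_{H^1}E(u),v)_{H_0^1(\Omega)}$, with the exact remainder $\frac12\int_\Omega(|\nabla v|^2+V|v|^2)+\frac{3\beta}{2}\int_\Omega u^2v^2+\beta\int_\Omega uv^3+\frac{\beta}{4}\int_\Omega v^4$ then bounded via $V\le V_{\max}$, Poincar\'e, H\"older, and the embedding \eqref{embed_L4}, yielding the identical constants. No discrepancies to report.
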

	
	\begin{proof}
		We have
		\begin{align*}
			& E(u+v) - E(u) \\
			= & \int_\Omega \left(\frac{1}{2}|\nabla u +\nabla v|^2 -\frac{1}{2}|\nabla u|^2\right) + \int_\Omega \left(\frac{1}{2} V|u+v|^2 - \frac{1}{2} V|u|^2\right) \\
			& \qquad\qquad +\int_\Omega \left(\frac{\beta}{4}|u+v|^4 - \frac{\beta}{4} |u|^4\right) \\
			= & \int_\Omega \left(\nabla u\cdot \nabla v + Vuv +\beta |u|^2 u v\right)\\
			&\qquad\qquad + \frac{1}{2}\int_\Omega \left(|\nabla v|^2 +  V|v|^2\right) +\int_\Omega \left(\frac{3\beta}{2} |u|^2 |v|^2 + \beta |v|^2 u v + \frac{\beta}{4} |v|^4 \right)\\
			=& \left(\nabla_{H^1} E(u),v\right)_{H_0^1(\Omega)}\\
			&\qquad\qquad + \frac{1}{2}\int_\Omega \left(|\nabla v|^2 +  V|v|^2\right) +\int_\Omega \left(\frac{3\beta}{2} |u|^2 |v|^2 + \beta |v|^2 u v + \frac{\beta}{4} |v|^4 \right),
		\end{align*}
		which leads to
		\begin{align*}
			&\left|E(u+v)-E(u) - \left(\nabla_{H^1} E(u),v\right)_{H_0^1(\Omega)}\right|\\
			\leq &\frac{1}{2}\int_\Omega \left(|\nabla v|^2 +  V|v|^2\right)+ \int_\Omega \left(\frac{3\beta}{2} |u|^2 |v|^2 + \beta |u| |v|^3 + \frac{\beta}{4} |v|^4\right) \\
			\leq &\frac{1 + C_3^2 V_{\max}}{2}\norm{v}_{H_0^1(\Omega)}^2+ \frac{3\beta}{2}\norm{u}_{L^4(\Omega)}^2 \norm{v}_{L^4(\Omega)}^2 \\
			&\qquad\qquad +\beta \norm{u}_{L^4(\Omega)} \norm{v}_{L^4(\Omega)}^3 + \frac{\beta}{4} \norm{v}_{L^4(\Omega)}^4 \\
			\leq& \frac{1 + C_3^2 V_{\max}}{2}\norm{v}_{H_0^1(\Omega)}^2 + \frac{3\beta C_1^4}{2}\norm{u}_{H_0^1(\Omega)}^2 \norm{v}_{H_0^1(\Omega)}^2 \\
			&\qquad \qquad +\beta C_1^4 \norm{u}_{H_0^1(\Omega)} \norm{v}_{H_0^1(\Omega)}^3 + \frac{\beta C_1^4}{4} \norm{v}_{H_0^1(\Omega)}^4.
		\end{align*}
	\end{proof}

	\subsection{Proofs of Theorem~\ref{thm:energy_decay_H1} and Theorem~\ref{thm:global_conv_H1}}

	\begin{proof}[Proof of Theorem~\ref{thm:energy_decay_H1}]
		Set
		\begin{align*}
			& C_u = \left((1 +C_3^2 V_{\max}) \norm{u_0}_{H_0^1(\Omega)}^2 + \frac{\beta C_1^4}{2}\norm{u_0}_{H_0^1(\Omega)}^4\right)^{1/2}, \\
			& C_g = C_u + C_3 V_{\max} + \beta C_1^3 C_2 C_u^3,
		\end{align*}
		and we will determine the constant $C_\alpha\leq 1$ later. Our goal is to prove the three conclusions in Theorem~\ref{thm:energy_decay_H1}, i.e., 
		\begin{itemize}
			\item[(i)] $\norm{u_n}_{H_0^1(\Omega)}\leq C_u, \ \forall~n\geq 0$.
			\item[(ii)] $\norm{\nabla^{\mathcal{R}}_{H^1}E(u_n)}_{H_0^1(\Omega)}\leq \norm{\nabla_{H^1} E(u_n)}_{H_0^1(\Omega)} \leq C_g, \ \forall~n\geq 0$.
			\item[(iii)] $E(u_n) - E(u_{n+1})\geq \frac{\alpha_{\min}}{2} \norm{\nabla^{\mathcal{R}}_{H^1} E(u_n)}_{H_0^1(\Omega)}^2, \ \forall~n\geq 0$.
		\end{itemize}
		We prove (i), (ii), and (iii) by induction. It is clear that (i) holds for $n=0$. Suppose that (i) holds for $0,1,\dots,n$ and that (ii) and (iii) hold for $0,1,\dots,n-1$. We aim to show that (ii) and (iii) hold for $n$ and that (i) holds for $n+1$.
		
		It follows directly from Lemma~\ref{lem:esti_gradEu} and (i) that (ii) holds for $n$. We focus on (iii) then. Denote
		\begin{equation*}
			g_n = \nabla^{\mathcal{R}}_{H^1} E(u_n),\text{ and }\Tilde{u}_n = u_n - \alpha_n \nabla^{\mathcal{R}}_{H^1} E(u_n).
		\end{equation*} The iterative scheme is
		\begin{equation*}
			u_{n+1} = R(\Tilde{u}_n) =  R\left(u_n - \alpha_n g_n\right) = u_n - \alpha_n g_n + R_n = \Tilde{u}_n + R_n,
		\end{equation*}
		where
		\begin{equation}\label{def:Rn}
			R_n = R\left(u_n - \alpha_n g_n\right) - \left(u_n - \alpha_n g_n\right).
		\end{equation}
		According to Lemma~\ref{lem:esti_retraction}, it holds that
		\begin{equation}\label{eq:Rn}
			\begin{split}
				\norm{R_n}_{H_0^1(\Omega)} & \leq \frac{\alpha_n^2}{2}\norm{g_n}_{L^2(\Omega)}^2 \norm{u_n - \alpha_n g_n}_{H_0^1(\Omega)} \\
				& \leq \frac{\alpha_n^2}{2} (C_u + C_g) C_3^2 \norm{g_n}_{H_0^1(\Omega)}^2 \leq \frac{ (C_u + C_g) C_3^2 C_g^2}{2},
			\end{split}
		\end{equation}
		where we used $\alpha_n\leq C_\alpha\leq 1$. Similar to \eqref{eq:gradEu} $\|\nabla_{H^1}E(\tilde u_n)\|_{H_0^1(\Omega)}$ can be upper bounded as
		\begin{align*}
			\norm{\nabla_{H^1} E(\Tilde{u}_n)}_{H_0^1(\Omega)}& \leq \norm{\Tilde{u}_n}_{H_0^1(\Omega)} + C_3 V_{\max} \norm{\Tilde{u}_n}_{L^2(\Omega)} + \beta C_1^3 C_2 \norm{\Tilde{u}_n}_{H_0^1(\Omega)}^3 \\
			&\leq (1 + C_3^2 V_{\max}) \norm{\Tilde{u}_n}_{H_0^1(\Omega)}  + \beta C_1^3 C_2 \norm{\Tilde{u}_n}_{H_0^1(\Omega)}^3 \\
			& \leq (1 + C_3^2 V_{\max}) (C_u + C_g) + \beta C_1^3 C_2 (C_u + C_g)^3,
		\end{align*}
		where we used $\norm{\Tilde{u}_n}_{H_0^1(\Omega)} \leq \norm{u_n}_{H_0^1(\Omega)} + \alpha_n \norm{g_n}_{H_0^1(\Omega)} \leq C_u + C_g$. Then using Lemma~\ref{lem:linear_error}, one can estimate that
		\begin{align*}
			&|E(\Tilde{u}_n) - E(\Tilde{u}_n + R_n)|\\
			\leq & \left|(\nabla_{H^1} E(\Tilde{u}_n), R_n)_{H_0^1(\Omega)} \right| + \frac{1 + C_3^2 V_{\max}}{2}\norm{R_n}_{H_0^1(\Omega)}^2+ \frac{3\beta C_1^4}{2}\norm{\Tilde{u}_n}_{H_0^1(\Omega)}^2 \norm{R_n}_{H_0^1(\Omega)}^2\\
			&\quad  +\beta C_1^4 \norm{\Tilde{u}_n}_{H_0^1(\Omega)} \norm{R_n}_{H_0^1(\Omega)}^3 + \frac{\beta C_1^4}{4} \norm{R_n}_{H_0^1(\Omega)}^4\\
			\leq & \norm{R_n}_{H_0^1(\Omega)}\left(\norm{\nabla_{H^1} E(\Tilde{u}_n)}_{H_0^1(\Omega)} + \frac{1 + C_3^2 V_{\max}}{2}\norm{R_n}_{H_0^1(\Omega)}\right.\\
			&\quad \left.+ \frac{3\beta C_1^4}{2}\norm{\Tilde{u}_n}_{H_0^1(\Omega)}^2 \norm{R_n}_{H_0^1(\Omega)} +\beta C_1^4 \norm{\Tilde{u}_n}_{H_0^1(\Omega)} \norm{R_n}_{H_0^1(\Omega)}^2 + \frac{\beta C_1^4}{4} \norm{R_n}_{H_0^1(\Omega)}^3\right) \\
			\leq & \alpha_n^2 C_R \norm{g_n}_{H_0^1(\Omega)}^2,
		\end{align*}
		where $C_R$ is a constant depending only on $\beta$, $C_u$, $C_g$, $V_{\max}$, $C_1$, $C_2$, and $C_3$, and that
		\begin{align*}
			&\left|E(u_n-\alpha_n g_n)-E(u_n)-(\nabla_{H^1} E(u_n),-\alpha_n g_n)_{H_0^1(\Omega)}\right|\\
			\leq& \frac{1 + C_3^2 V_{\max}}{2}\norm{\alpha_n g_n}_{H_0^1(\Omega)}^2+ \frac{3\beta C_1^4}{2}\norm{u_n}_{H_0^1(\Omega)}^2 \norm{\alpha_n g_n}_{H_0^1(\Omega)}^2 \\
			&\qquad\qquad +\beta C_1^4 \norm{u_n}_{H_0^1(\Omega)}\norm{\alpha_n g_n}_{H_0^1(\Omega)}^3 + \frac{\beta C_1^4}{4} \norm{\alpha_n g_n}_{H_0^1(\Omega)}^4 \\
			\leq & \alpha_n^2 \norm{g_n}_{H_0^1(\Omega)}^2\left(\frac{1 + C_3^2 V_{\max}}{2} +  \frac{3\beta C_1^4 C_u^2}{2} +\beta C_1^4 C_u C_g + \frac{\beta C_1^4 C_g^2}{4} \right).
		\end{align*}
		Combining the above two inequalities, one has
		\begin{align*}
			& E(u_n) - E(u_{n+1}) \\
			= & E(u_n) - E(\Tilde{u}_n) +E(\Tilde{u}_n) - E(\Tilde{u}_n + R_n) \\
			\geq &\alpha_n (\nabla_{H^1} E(u_n), g_n)_{H_0^1(\Omega)}\\ 
			&\qquad - \left|E(u_n-\alpha_n g_n)-E(u_n)-(\nabla_{H^1} E(u_n),-\alpha_n g_n)_{H_0^1(\Omega)}\right| \\
			&\qquad - |E(\Tilde{u}_n) - E(\Tilde{u}_n + R_n)| \\ 
			\geq & \alpha_n \norm{g_n}_{H_0^1(\Omega)}^2 \\
			&\qquad -\alpha_n^2 \norm{g_n}_{H_0^1(\Omega)}^2\left(\frac{1+C_3^2 V_{\max}}{2} +  \frac{3\beta C_1^4 C_u^2}{2} +\beta C_1^4 C_u C_g + \frac{\beta C_1^4 C_g^2}{4} \right) \\
			&\qquad - \alpha_n^2 C_R \norm{g_n}_{H_0^1(\Omega)}^2\\
			\geq & \frac{\alpha_{\min}}{2}\norm{g_n}_{H_0^1(\Omega)}^2,
		\end{align*}
		if
		\begin{equation*}
			\alpha_{\max}\left(C_R + \frac{1 + C_3^2 V_{\max}}{2} +  \frac{3\beta C_1^4 C_u^2}{2} +\beta C_1^4 C_u C_g + \frac{\beta C_1^4 C_g^2}{4} \right)\leq \frac{1}{2},
		\end{equation*}
		which can be guaranteed by $\alpha_{\max}\leq C_\alpha$, where $C_\alpha$ is a sufficiently small constant. This means that (iii) holds for $n$. Then we have that
		\begin{align*}
			\norm{u_{n+1}}_{H_0^1(\Omega)}^2 &\leq 2 E(u_{n+1})\leq 2E(u_n) \leq \cdots\leq 2E(u_0) \\
			& \leq \norm{u_0}_{H_0^1(\Omega)}^2 + V_{\max} \norm{u_0}_{L^2(\Omega)}^2 + \frac{\beta}{2}\norm{u_0}_{L^4(\Omega)}^4\\
			& \leq (1 + C_3^2 V_{\max}) \norm{u_0}_{H_0^1(\Omega)}^2 + \frac{\beta C_1^4}{2}\norm{u_0}_{H_0^1(\Omega)}^4,
		\end{align*}
		which implies that $\norm{u_{n+1}}_{H_0^1(\Omega)} \leq  C_u$, i.e., (i) holds for $n+1$. The proof is hence completed.
	\end{proof}
	
	We need the following theorem for proving Theorem~\ref{thm:global_conv_H1}. Some ideas in the proof of Theorem~\ref{thm:strong_conv} follow the proof of Theorem 4.9 in \cite{henning2020sobolev}.
	
	\begin{theorem}\label{thm:strong_conv}
		Suppose that $\{v_n\}_{n=0}^\infty$ is a bounded sequence in $\calM\subset H_0^1(\Omega)$ with
		\begin{equation*}
			\lim_{n\rightarrow\infty}  \norm{\nabla^{\mathcal{R}}_{H^1} E(v_n)}_{H_0^1(\Omega)} = 0.
		\end{equation*}
		Let $v^*$ be any weak limit of $\{v_n\}_{n=0}^\infty$ in $H_0^1(\Omega)$. Then $v^*$ is a critical point of $E$ and $\{v_n\}_{n=0}^\infty$ has a subsequence that converges to $v^*$ strongly in $H_0^1(\Omega)$
	\end{theorem}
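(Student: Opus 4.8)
The plan is to exploit the compactness of the embeddings $H_0^1(\Omega)\subset\subset L^2(\Omega)$ and $H_0^1(\Omega)\subset\subset L^4(\Omega)$ together with the smoothing of the Green's operator $\calG_{H^1}$ in order to upgrade weak convergence of $\{v_n\}$ to strong convergence. Since $v^*$ is a weak limit, I first fix a subsequence, not relabeled, with $v_n\rightharpoonup v^*$ in $H_0^1(\Omega)$; by compactness $v_n\to v^*$ strongly in both $L^2(\Omega)$ and $L^4(\Omega)$. As each $v_n\in\calM$ satisfies $\norm{v_n}_{L^2(\Omega)}=1$, the strong $L^2$-convergence gives $\norm{v^*}_{L^2(\Omega)}=1$, so $v^*\in\calM$ and in particular $v^*\neq 0$.

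The central observation is that the nonlinear part of the gradient is compact. Writing $\nabla_{H^1}E(u)=u+\calG_{H^1}(Vu+\beta|u|^2 u)$ as in \eqref{nabla_E_H1}, I would show that $\calG_{H^1}(Vv_n+\beta|v_n|^2 v_n)\to\calG_{H^1}(Vv^*+\beta|v^*|^2 v^*)$ \emph{strongly} in $H_0^1(\Omega)$. For the potential term, $Vv_n\to Vv^*$ in $L^2(\Omega)$ since $V\in L^\infty(\Omega)$, and Lemma~\ref{lem:Gu}(ii) converts this into $H_0^1$-convergence after applying $\calG_{H^1}$. For the cubic term, the map $u\mapsto|u|^2u$ is continuous from $L^4(\Omega)$ into $L^{4/3}(\Omega)$ (via $\bigl||a|^2a-|b|^2b\bigr|\le C(|a|^2+|b|^2)|a-b|$ and Hölder), so $|v_n|^2v_n\to|v^*|^2v^*$ in $L^{4/3}(\Omega)$; then \eqref{embedL43} and Lemma~\ref{lem:Gu}(i) turn this into $H_0^1$-convergence. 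Thus the only term in $\nabla_{H^1}E(v_n)$ that fails to converge strongly is the bare linear term $v_n$.

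Next I would control the multiplier in the projection \eqref{proj_H1}. Set $\mu_n=\frac{(\nabla_{H^1}E(v_n),v_n)_{L^2(\Omega)}}{\norm{\calG_{H^1}v_n}_{H_0^1(\Omega)}^2}$, so that $\nabla^{\mathcal{R}}_{H^1}E(v_n)=\nabla_{H^1}E(v_n)-\mu_n\calG_{H^1}v_n$. Using the identity $\norm{\calG_{H^1}v_n}_{H_0^1(\Omega)}^2=(\calG_{H^1}v_n,v_n)_{L^2(\Omega)}$ together with the strong convergences established above, both numerator and denominator converge; crucially the denominator tends to $\norm{\calG_{H^1}v^*}_{H_0^1(\Omega)}^2>0$, which is positive because $v^*\neq0$ and $\calG_{H^1}$ is injective. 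Hence $\mu_n\to\mu^*$, the analogous coefficient at $v^*$, and $\mu_n\calG_{H^1}v_n\to\mu^*\calG_{H^1}v^*$ strongly in $H_0^1(\Omega)$.

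The decisive step, which I expect to carry the real content, is the bootstrap to strong convergence. Combining $v_n=\nabla_{H^1}E(v_n)-\calG_{H^1}(Vv_n+\beta|v_n|^2v_n)$ with $\nabla_{H^1}E(v_n)=\nabla^{\mathcal{R}}_{H^1}E(v_n)+\mu_n\calG_{H^1}v_n$, the right-hand side is a sum of three strongly $H_0^1$-convergent quantities: $\nabla^{\mathcal{R}}_{H^1}E(v_n)\to0$ by hypothesis, $\mu_n\calG_{H^1}v_n\to\mu^*\calG_{H^1}v^*$, and $\calG_{H^1}(Vv_n+\beta|v_n|^2v_n)$ converges as above. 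Therefore $v_n$ converges strongly in $H_0^1(\Omega)$, and by uniqueness its strong limit equals the weak limit $v^*$, which proves the strong-convergence claim. Finally, since $v_n\to v^*$ strongly and the map $u\mapsto\nabla^{\mathcal{R}}_{H^1}E(u)$ is continuous at $v^*$ (the denominator $\norm{\calG_{H^1}u}_{H_0^1(\Omega)}^2$ being nonzero there), passing to the limit in $\nabla^{\mathcal{R}}_{H^1}E(v_n)\to0$ gives $\nabla^{\mathcal{R}}_{H^1}E(v^*)=0$, so $v^*$ is a critical point. The main obstacle is ensuring that every nonlinear quantity feeding the projection is genuinely compact, so that only the bare linear term $v_n$ retains weak convergence, and that the denominator stays bounded away from zero; both reduce to the fact $v^*\neq0$, secured by the $L^2$-normalization on $\calM$.
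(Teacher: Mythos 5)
Your proof is correct, and it shares the same compactness backbone as the paper's (strong $L^2$/$L^4$ convergence of the weakly convergent subsequence, plus Lemma~\ref{lem:Gu} and the embedding \eqref{embedL43} to show that $\calG_{H^1}(Vv_n+\beta|v_n|^2v_n)$ and $\calG_{H^1}v_n$ converge strongly in $H_0^1(\Omega)$, and that the projection coefficient converges). But the decisive steps are organized in a genuinely different way. The paper first identifies $v^*$ as a critical point by passing to the \emph{weak} limit in the expression for $\nabla^{\mathcal{R}}_{H^1}E(v_n)$ (the bare term $v_n$ only converges weakly) and using that a sequence converging strongly to $0$ can have no other weak limit; it then obtains strong convergence by testing $\nabla^{\mathcal{R}}_{H^1}E(v_n)$ against $v_n$ in the $H_0^1$-inner product, extracting $\norm{v_n}_{H_0^1}^2\to\norm{v^*}_{H_0^1}^2$, and invoking the Hilbert-space fact that weak convergence plus norm convergence implies strong convergence. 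You instead run the argument in the opposite order: you solve the identity $\nabla_{H^1}E(v_n)=v_n+\calG_{H^1}(Vv_n+\beta|v_n|^2v_n)$ for $v_n$ and write $v_n=\nabla^{\mathcal{R}}_{H^1}E(v_n)+\mu_n\calG_{H^1}v_n-\calG_{H^1}(Vv_n+\beta|v_n|^2v_n)$, a sum of three strongly convergent terms, which yields strong convergence outright; criticality then follows by continuity of $u\mapsto\nabla^{\mathcal{R}}_{H^1}E(u)$ at $v^*$. Your bootstrap exploits the ``identity plus compact'' structure of the gradient more directly and dispenses with both the weak-limit identification and the norm-convergence trick; it also makes explicit a point the paper leaves implicit, namely that the limiting denominator $\norm{\calG_{H^1}v^*}_{H_0^1}^2$ is strictly positive (via $v^*\in\calM$ and injectivity of $\calG_{H^1}$), which is needed in both proofs for the coefficient $\mu_n$ (resp.\ the paper's $\gamma_n$) to converge. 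The paper's route, on the other hand, establishes criticality of $v^*$ without first knowing strong convergence, which is the more standard pattern for such arguments; in terms of hypotheses and conclusions the two proofs are equivalent.
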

	
	\begin{proof}
		By compact embeddings $H_0^1(\Omega)\subset\subset L^2(\Omega)$ and $H_0^1(\Omega)\subset\subset L^4(\Omega)$, there exists a subsequence of $\{v_n\}_{n=0}^\infty$ converging weakly in $H_0^1(\Omega)$ and strongly in $L^2(\Omega)$ and $L^4(\Omega)$ to some $v^*\in H_0^1(\Omega)$. Without loss of generality, we assume that
		\begin{equation*}
			v_n\rightarrow v^*,\quad\text{weakly in } H_0^1(\Omega)\text{ and strongly in }L^2(\Omega)\text{ and }L^4(\Omega).
		\end{equation*}
		Notice that $\norm{\calG_{H^1}(v_n - v^*)}_{H_0^1(\Omega)}\leq C_3 \norm{v_n - v^*}_{L^2(\Omega)}$ and that $\norm{\calG_{H^1}(V v_n - V v^*)}_{H_0^1(\Omega)}\leq C_3\norm{ V  v_n - V v^*}_{L^2(\Omega)} \leq C_3 V_{\max} \norm{v_n - v^*}_{L^2(\Omega)}$ by Lemma~\ref{lem:Gu}~(ii). We can obtain that 
		\begin{equation*}
			\calG_{H^1} v_n\rightarrow \calG_{H^1} v^*, \quad \text{strongly in }H_0^1(\Omega)\text{ and }L^2(\Omega),
		\end{equation*}
		and that
		\begin{equation*}
			\calG_{H^1} (V v_n)\rightarrow \calG_{H^1} (V v^*), \quad \text{strongly in }H_0^1(\Omega)\text{ and }L^2(\Omega),
		\end{equation*}
		Denote $e_n = v_n - v^*$. Then $\lim\limits_{n\rightarrow\infty}\norm{e_n}_{L^4(\Omega)} = 0$. Note that
		\begin{align*}
			& \norm{v_n^3 - (v^*)^3}_{H^{-1}(\Omega)} =  \norm{(v^*+e_n)^3 - (v^*)^3}_{H^{-1}(\Omega)} \\
			\leq & 3 \norm{(v^*)^2 e_n}_{H^{-1}(\Omega)}+3 \norm{v^* e_n^2}_{H^{-1}(\Omega)}+ \norm{ e_n^3}_{H^{-1}(\Omega)}\\
			\leq & 3 C_2 \norm{(v^*)^2 e_n}_{L^{4/3}(\Omega)}+3C_2 \norm{v^* e_n^2}_{L^{4/3}(\Omega)}+ C_2 \norm{ e_n^3}_{L^{4/3}(\Omega)}\\
			= & 3 C_2 \left(\int_\Omega (v^*)^{8/3} e_n^{4/3}\right)^{3/4} + 3 C_2 \left(\int_\Omega (v^*)^{4/3} e_n^{8/3}\right)^{3/4} + C_2 \left(\int_\Omega  e_n^4\right)^{3/4} \\
			\leq & 3 C_2\norm{v^*}_{L^4(\Omega)}^2 \norm{e_n}_{L^4(\Omega)} +3 C_2\norm{v^*}_{L^4(\Omega)} \norm{e_n}_{L^4(\Omega)}^2 + C_2\norm{e_n}_{L^4(\Omega)}^3\\
			\rightarrow & 0,
		\end{align*}
		where we used \eqref{embedL43}. By Lemma~\ref{lem:Gu}~(i), we have $\norm{\calG_{H^1}(v_n^3 - (v^*)^3)}_{H_0^1(\Omega)}\leq \norm{v_n^3 - (v^*)^3}_{H^{-1}(\Omega)}$ thus 
		\begin{equation*}
			\calG_{H^1} (|v_n|^2 v_n)\rightarrow \calG_{H^1} (|v^*|^2 v^*), \quad \text{strongly in }H_0^1(\Omega)\text{ and }L^2(\Omega).
		\end{equation*}
		Then one has that
		\begin{equation*}
			\frac{1 + \left(\calG_{H^1} (V v_n + \beta |v_n|^2 v_n), v_n\right)_{L^2(\Omega)}}{\norm{\calG_{H^1} v_n}_{H_0^1(\Omega)}^2}\rightarrow \frac{1 + \left(\calG_{H^1} (V v^* + \beta |v^*|^2 v^*), v^*\right)_{L^2(\Omega)}}{\norm{\calG_{H^1} v^*}_{H_0^1(\Omega)}^2},
		\end{equation*}
		and hence that
		\begin{align*}
			&\nabla^{\mathcal{R}}_{H^1}E(v_n) \\
			=& v_n + \calG_{H^1}( V v_n +\beta |v_n|^2 v_n) - \frac{1 + \left(\calG_{H^1}( V v_n + \beta |v_n|^2 v_n), v_n\right)_{L^2(\Omega)}}{\norm{\calG_{H^1} v_n}_{H_0^1(\Omega)}^2} \calG_{H^1} v_n\\
			\rightarrow & v^* + \calG_{H^1}( V v^*+\beta |v^*|^2 v^*) - \frac{1 + \left(\calG_{H^1} (V v^* + \beta |v^*|^2 v^*), v^*\right)_{L^2(\Omega)}}{\norm{\calG_{H^1} v^*}_{H_0^1(\Omega)}^2} \calG_{H^1} v^*\\
			=& \nabla^{\mathcal{R}}_{H^1}E(v^*), \quad\text{weakly in }H_0^1(\Omega),
		\end{align*}
		which combined with $\lim\limits_{n\to\infty} \norm{\nabla^{\mathcal{R}}_{H^1}E(v_n)}_{H_0^1(\Omega)} = 0$ yields that $\nabla^{\mathcal{R}}_{H^1}E(v^*) = 0$, i.e., 
		\begin{equation*}
			-\Delta v^* +  V v^*+\beta |v^*|^2 v^* = \frac{1 + \left(\calG_{H^1} (V v^* + \beta |v^*|^2 v^*), v^*\right)_{L^2(\Omega)}}{\norm{\calG_{H^1} v^*}_{H_0^1(\Omega)}^2}  v^*,
		\end{equation*}
		which states that $v^*$ is a critical point as well as an eigenfunction.
		
		We then prove that $v_n$ converges strongly to $v^*$ in $H_0^1(\Omega)$. $\lim\limits_{n\to\infty}\norm{\nabla^{\mathcal{R}}_{H^1}E(v_n)}_{H_0^1(\Omega)} = 0$ and the boundedness of $\{v_n\}_{n=0}^\infty\subset H_0^1(\Omega)$ imply that
		\begin{equation*}
			(\nabla^{\mathcal{R}}_{H^1}E(v_n),v_n)_{H_0^1(\Omega)}\rightarrow 0 = (\nabla^{\mathcal{R}}_{H^1}E(v^*),v^*)_{H_0^1(\Omega)},
		\end{equation*}
		i.e.,
		\begin{align*}
			& \norm{v_n}_{H_0^1(\Omega)}^2 + ( V v_n, v_n)_{L^2(\Omega)}+ ( \beta |v_n|^2 v_n, v_n)_{L^2(\Omega)} \\
			&\qquad\qquad\qquad\qquad\qquad\qquad - \frac{1 + \left(\calG_{H^1} (V v_n+\beta |v_n|^2 v_n), v_n\right)_{L^2(\Omega)}}{\norm{\calG_{H^1} v_n}_{H_0^1(\Omega)}^2} \\
			\rightarrow & \norm{v^*}_{H_0^1(\Omega)}^2 + ( V v^*, v^*)_{L^2(\Omega)}+ ( \beta |v^*|^2 v^*, v^*)_{L^2(\Omega)} \\
			&\qquad\qquad\qquad\qquad\qquad\qquad - \frac{1 + \left(\calG_{H^1} (V v^*+\beta |v^*|^2 v^*), v^*\right)_{L^2(\Omega)}}{\norm{\calG_{H^1} v^*}_{H_0^1(\Omega)}^2}. 
		\end{align*} 
		Note that the second, third, and fourth terms above converge respectively. One has $\norm{v_n}_{H_0^1(\Omega)}\rightarrow \norm{v^*}_{H_0^1(\Omega)}$. Then the proof can be completed since the convergence of norm together with the weak convergence implies the strong convergence.
	\end{proof}
	
	\begin{proof}[Proof of Theorem~\ref{thm:global_conv_H1}]
		It follows from Theorem~\ref{thm:energy_decay_H1}~(iii) that
		\begin{equation*}
			\sum_{n=0}^\infty \norm{\nabla^{\mathcal{R}}_{H^1}E(u_n)}_{H_0^1(\Omega)}^2 \leq \frac{2}{\alpha_{\min}} \sum_{n=0}^\infty (E(u_n) - E(u_{n+1}))\leq \frac{2 E(u_0)}{\alpha_{\min}} < \infty,
		\end{equation*}
		which implies
		\begin{equation*}
			\lim_{n\rightarrow\infty}\norm{\nabla^{\mathcal{R}}_{H^1}E(u_n)}_{H_0^1(\Omega)} = 0.
		\end{equation*}
		Note that $\{u_n\}_{n=0}^\infty\subset\calM$ is bounded in $H_0^1(\Omega)$. We can obtain the desired results by applying Theorem~\ref{thm:strong_conv}.
	\end{proof}

	\section{Proof of Local Convergence}\label{sec:prooflocal}
	
	We present the proof of Theorem~\ref{thm:local_conv_H1} in this section. Theorem~\ref{thm:local_conv_a0} can be proved by following the same lines (see the remark at the beginning of Section~\ref{sec:proofglobal}) and its proof is hence omitted. Theorem~\ref{thm:local_conv_au} can also be proved with a similar analysis with some different technical lemmas, which is presented in Appendix~\ref{sec:pf_local_au}.
	
	\subsection{Technical lemmas}
	
	\begin{lemma}\label{lem:Elocalconvex}
		Suppose that Assumption~\ref{asp:eigengap} holds. Then for any $u\in \calM\subset H_0^1(\Omega)$, it holds that
		\begin{equation*}
			E(u) - E(u^*) \geq \frac{\lambda_0 -\lambda_1}{4}\norm{u - u^*}_{L^2(\Omega)}^2,
		\end{equation*}
		as long as $\norm{u - u^*}_{L^2(\Omega)}^2\leq 2$.
	\end{lemma}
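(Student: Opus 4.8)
The plan is to set $v := u - u^*$ and Taylor-expand $E$ about the ground state, exploiting that $u^*$ solves the linearized eigenvalue problem \eqref{linear_eigen_problem} with eigenvalue $\lambda_0$. First I would record the constraint: since $u,u^*\in\calM$ are both $L^2$-normalized, expanding $\norm{u^*+v}_{L^2(\Omega)}^2 = 1$ gives the exact identity $(u^*,v)_{L^2(\Omega)} = -\tfrac{1}{2}\norm{v}_{L^2(\Omega)}^2$, so $v$ fails to be tangent to $\calM$ precisely at second order. Expanding $E(u^*+v)-E(u^*)$ term by term, the first-order part is $\int_\Omega \nabla u^*\cdot\nabla v + V u^* v + \beta (u^*)^3 v$; integrating by parts and substituting $-\Delta u^* = \lambda_0 u^* - V u^* - \beta (u^*)^3$ collapses this to $\lambda_0 (u^*,v)_{L^2(\Omega)} = -\tfrac{\lambda_0}{2}\norm{v}_{L^2(\Omega)}^2$. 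Thus the nominally linear term actually contributes a quadratic, negative piece that must be reabsorbed later.

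The remaining terms are the quadratic part $\tfrac12\int_\Omega |\nabla v|^2 + V v^2 + 3\beta (u^*)^2 v^2$ together with the genuinely higher-order pieces $\beta\int_\Omega u^* v^3 + \tfrac{\beta}{4}\int_\Omega v^4$. The key algebraic step will be the pointwise identity
\begin{equation*}
\tfrac{3\beta}{2}(u^*)^2 v^2 + \beta u^* v^3 + \tfrac{\beta}{4}v^4 = \tfrac{\beta}{2}(u^*)^2 v^2 + \beta v^2\bigl(u^* + \tfrac12 v\bigr)^2 \ge \tfrac{\beta}{2}(u^*)^2 v^2,
\end{equation*}
which lets me discard the cubic and quartic contributions at the cost of downgrading the coefficient $3\beta$ to $\beta$. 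What survives is exactly $\tfrac12 q(v)$, where $q(v):=\int_\Omega |\nabla v|^2 + V v^2 + \beta (u^*)^2 v^2 = (Lv,v)_{L^2(\Omega)}$ is the quadratic form of the linearized operator $L=-\Delta+V+\beta(u^*)^2$. Hence $E(u)-E(u^*)\ge -\tfrac{\lambda_0}{2}\norm{v}_{L^2(\Omega)}^2 + \tfrac12 q(v)$.

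It remains to bound $q(v)$ from below using the spectral gap, which is where the assumption $\lambda_1>\lambda_0$ enters. Expanding $v=\sum_k c_k\psi_k$ in the $L^2$-orthonormal eigenbasis of $L$, with $\psi_0=u^*$ and $c_0=(u^*,v)_{L^2(\Omega)}$, gives $q(v)=\sum_k \lambda_k c_k^2 \ge \lambda_1\norm{v}_{L^2(\Omega)}^2 - (\lambda_1-\lambda_0)c_0^2$. Combining this with the linear-term contribution, the $\lambda_0$ and $\lambda_1$ pieces assemble into the gap: $E(u)-E(u^*)\ge \tfrac{\lambda_1-\lambda_0}{2}\bigl(\norm{v}_{L^2(\Omega)}^2 - c_0^2\bigr)$. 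Finally I would substitute $c_0^2=\tfrac14\norm{v}_{L^2(\Omega)}^4$ from the constraint and invoke $\norm{v}_{L^2(\Omega)}^2\le 2$ to obtain $\norm{v}_{L^2(\Omega)}^2 - c_0^2 = \norm{v}_{L^2(\Omega)}^2\bigl(1-\tfrac14\norm{v}_{L^2(\Omega)}^2\bigr)\ge \tfrac12\norm{v}_{L^2(\Omega)}^2$, yielding the lower bound with constant $\tfrac{\lambda_1-\lambda_0}{4}$ (which in particular gives the displayed inequality; note the meaningful convexity constant is the positive gap $\lambda_1-\lambda_0$). The hard part will be the bookkeeping of the three quadratic-in-$\norm{v}$ contributions — the negative $-\tfrac{\lambda_0}{2}$ from the non-tangency of $v$, the spectral lower bound $\tfrac{\lambda_1}{2}$, and the ground-state-projection correction $-\tfrac{\lambda_1-\lambda_0}{2}c_0^2$ — and recognizing that they conspire into the spectral gap; spotting the completing-the-square identity that tames the $\beta$-nonlinearity is the other step one must not miss.
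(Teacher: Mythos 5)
Your proof is correct and is essentially the paper's own argument written in the coordinate $v=u-u^*$: the square you discard, $\beta v^2\bigl(u^*+\tfrac12 v\bigr)^2$, is identical to the term $\tfrac{\beta}{4}\bigl(|u|^2-|u^*|^2\bigr)^2$ that the paper drops (since $|u|^2-|u^*|^2=2v\bigl(u^*+\tfrac12 v\bigr)$), and your gap quantity $\norm{v}_{L^2(\Omega)}^2-c_0^2$ coincides with the paper's $\norm{u_\perp}_{L^2(\Omega)}^2$, the squared $L^2$-norm of the component of $u$ orthogonal to $u^*$. The only organizational difference is that the paper completes the square at the outset and then applies the spectral gap directly to $u$ decomposed as $u_\parallel+u_\perp$ (no Taylor expansion, no Euler--Lagrange collapse of a linear term, no full eigenbasis expansion), whereas you expand about $u^*$; both routes establish the stronger inequality with the positive constant $\tfrac{\lambda_1-\lambda_0}{4}$, which confirms your observation that the sign $\lambda_0-\lambda_1$ in the lemma's display is a typo.
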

	
	\begin{proof}
		We have
		\begin{align*}
			& E(u) - E(u^*) \\
			= & \int_\Omega \left(\frac{1}{2}|\nabla u|^2 + \frac{1}{2}V |u|^2 + \frac{\beta}{4} |u|^4 \right) - \int_\Omega \left( \frac{1}{2}|\nabla u^*|^2 + \frac{1}{2}V |u^*|^2 + \frac{\beta}{4} |u^*|^4 \right) \\
			= & \int_\Omega \left(\frac{1}{2}|\nabla u|^2 + \frac{1}{2}V |u|^2 + \frac{\beta}{2} |u^*|^2 |u|^2\right)  - \int_\Omega \left(\frac{1}{2}|\nabla u^*|^2 + \frac{1}{2}V |u^*|^2 + \frac{\beta}{2} |u^*|^2 |u^*|^2\right) \\
			&\qquad\qquad + \int_\Omega \left(\frac{\beta}{4} |u|^4 - \frac{\beta}{2} |u^*|^2 |u|^2 + \frac{\beta}{4}|u^*|^4\right)\\
			\geq & \int_\Omega \left(\frac{1}{2}|\nabla u|^2 + \frac{1}{2}V |u|^2 + \frac{\beta}{2} |u^*|^2 |u|^2\right)  - \int_\Omega \left(\frac{1}{2}|\nabla u^*|^2 + \frac{1}{2}V |u^*|^2 + \frac{\beta}{2} |u^*|^2 |u^*|^2\right).
		\end{align*}
		Let $u_{\parallel} = (u, u^*)_{L^2(\Omega)} u^*$ be the $L^2$-orthogonal projection of $u$ onto the subspace spanned by $u^*$, and let $u_\perp = u - u_{\parallel}$. It follows from the orthogonality that
		\begin{equation*}
			\norm{u_\parallel}_{L^2(\Omega)}^2 + \norm{u_\perp }_{L^2(\Omega)}^2 = \norm{u}_{L^2(\Omega)}^2 = 1.
		\end{equation*}
		Therefore,
		\begin{align*}
			& \int_\Omega \left(\frac{1}{2}|\nabla u|^2 + \frac{1}{2}V |u|^2 + \frac{\beta}{2} |u^*|^2 |u|^2\right) - \int_\Omega \left(\frac{1}{2}|\nabla u^*|^2 + \frac{1}{2}V |u^*|^2 + \frac{\beta}{2} |u^*|^2 |u^*|^2\right)\\
			= & \frac{1}{2}\left(u, (-\Delta  + V + \beta |u^*|^2) u\right)_{L^2(\Omega)} - \frac{1}{2}\left(u^*, (-\Delta  + V + \beta |u^*|^2) u^*\right)_{L^2(\Omega)} \\
			= & \frac{1}{2}\left(u_\parallel, (-\Delta  + V + \beta |u^*|^2) u_\parallel\right)_{L^2(\Omega)} + \frac{1}{2}\left(u_\perp, (-\Delta  + V + \beta |u^*|^2) u_\perp\right)_{L^2(\Omega)} - \frac{\lambda_0}{2}\\
			\geq & \frac{\lambda_0}{2} \norm{u_\parallel}_{L^2(\Omega)}^2 + \frac{\lambda_1}{2}\norm{u_\perp}_{L^2(\Omega)}^2 - \frac{\lambda_0}{2}\\
			= & \frac{\lambda_1 - \lambda_0}{2}\norm{u_\perp}_{L^2(\Omega)}^2.
		\end{align*}
		Notice also that
		\begin{align*}
			\norm{u_\perp}_{L^2(\Omega)}^2 = & 1 - \norm{u_\parallel}_{L^2(\Omega)}^2 = 1 - \left|(u, u^*)_{L^2(\Omega)}\right|^2\\
			= & 1 - \frac{1}{4}\left(\norm{u}_{L^2(\Omega)}^2 + \norm{u^*}_{L^2(\Omega)}^2 - \norm{u - u^*}_{L^2(\Omega)}^2\right)^2 \\
			= & 1-\frac{1}{4}\left(2 - \norm{u-u^*}_{L^2(\Omega)}^2\right)^2 \\
			= & \norm{u-u^*}_{L^2(\Omega)}^2  - \frac{1}{4}\norm{u-u^*}_{L^2(\Omega)}^4.
		\end{align*}
		So we have
		\begin{align*}
			E(u_n) - E(u^*) & \geq \frac{\lambda_1 - \lambda_0}{2} \norm{u-u^*}_{L^2(\Omega)}^2 - \frac{\lambda_1 - \lambda_0}{8}\norm{u-u^*}_{L^2(\Omega)}^4 \\ 
			& \geq \frac{\lambda_1 - \lambda_0}{4} \norm{u-u^*}_{L^2(\Omega)}^2,
		\end{align*}
		if $\norm{u - u^*}_{L^2(\Omega)}^2\leq 2$.
	\end{proof}

	\begin{lemma}\label{lem:Lg}
		Let $C_u$ be the constant as in Theorem~\ref{thm:energy_decay_H1}. Then there exists a constant $L_g$ depending only on $\Omega$, $d$, $\beta$, $V$, $u^*$ and $\norm{u_0}_{H_0^1(\Omega)}$ such that 
		\begin{equation*}
			\norm{\nabla^{\mathcal{R}}_{H^1}E(u)}_{H_0^1(\Omega)}\leq L_g\norm{u - u^*}_{H_0^1(\Omega)}
		\end{equation*}
		holds as long as $u\in \calM$  and  $u$ satisfies that $\norm{u}_{H_0^1(\Omega)}\leq C_u$ and $\norm{u - u^*}_{H_0^1(\Omega)}$ is sufficiently small.
	\end{lemma}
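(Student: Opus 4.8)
The plan is to use that the ground state $u^*$ is a critical point of $E$ on $\calM$, so $\nabla^{\mathcal{R}}_{H^1}E(u^*)=0$ and it suffices to bound $\norm{\nabla^{\mathcal{R}}_{H^1}E(u)-\nabla^{\mathcal{R}}_{H^1}E(u^*)}_{H_0^1(\Omega)}$ by a fixed multiple of $\norm{u-u^*}_{H_0^1(\Omega)}$; that is, to prove a local Lipschitz estimate for the map $u\mapsto\nabla^{\mathcal{R}}_{H^1}E(u)$. Writing the Riemannian gradient as $\nabla^{\mathcal{R}}_{H^1}E(u)=F(u)-c(u)\,\calG_{H^1}u$, where $F(u)=u+\calG_{H^1}(Vu+\beta|u|^2u)$ and $c(u)=\bigl(1+(\calG_{H^1}(Vu+\beta|u|^2u),u)_{L^2(\Omega)}\bigr)/\norm{\calG_{H^1}u}_{H_0^1(\Omega)}^2$, I would estimate the ``gradient part'' $F(u)-F(u^*)$ and the ``coefficient part'' $c(u)\calG_{H^1}u-c(u^*)\calG_{H^1}u^*$ separately.

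For the gradient part, linearity of $\calG_{H^1}$ gives $F(u)-F(u^*)=(u-u^*)+\calG_{H^1}\bigl(V(u-u^*)\bigr)+\beta\calG_{H^1}\bigl(u^3-(u^*)^3\bigr)$. The first term contributes $\norm{u-u^*}_{H_0^1(\Omega)}$, the second is at most $C_3^2V_{\max}\norm{u-u^*}_{H_0^1(\Omega)}$ by Lemma~\ref{lem:Gu}(ii) and \eqref{poincare_ineq}, and for the cubic term I would insert the factorization $u^3-(u^*)^3=(u-u^*)\bigl(u^2+uu^*+(u^*)^2\bigr)$ and then run exactly the $H^{-1}$ computation from the proof of Theorem~\ref{thm:strong_conv}, i.e.\ Lemma~\ref{lem:Gu}(i), the embedding \eqref{embedL43}, Hölder ($L^4\cdot L^2\hookrightarrow L^{4/3}$) and \eqref{embed_L4}. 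Since $\norm{u}_{H_0^1(\Omega)}\le C_u$ and $\norm{u^*}_{H_0^1(\Omega)}$ is fixed, the $L^4$ norms of $u^2+uu^*+(u^*)^2$ are bounded by a constant, so $\norm{F(u)-F(u^*)}_{H_0^1(\Omega)}\le L_1\norm{u-u^*}_{H_0^1(\Omega)}$ for a constant $L_1$ of the asserted dependence.

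The hard part will be the coefficient part, and the crux is a \emph{uniform lower bound} on the denominator $\norm{\calG_{H^1}u}_{H_0^1(\Omega)}^2$; this is the one place the a priori bound $\norm{u}_{H_0^1(\Omega)}\le C_u$ is genuinely needed. Setting $g=\calG_{H^1}u$ and testing the defining identity $(\nabla g,\nabla z)_{L^2(\Omega)}=(u,z)_{L^2(\Omega)}$ with $z=u$ gives $(\nabla g,\nabla u)_{L^2(\Omega)}=\norm{u}_{L^2(\Omega)}^2=1$, so by Cauchy--Schwarz $1\le\norm{g}_{H_0^1(\Omega)}\norm{u}_{H_0^1(\Omega)}\le C_u\norm{\calG_{H^1}u}_{H_0^1(\Omega)}$, i.e.\ $\norm{\calG_{H^1}u}_{H_0^1(\Omega)}\ge C_u^{-1}$ (and likewise at $u^*$). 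Combined with the upper bounds of Lemma~\ref{lem:Gu}, this makes $c(u)$ uniformly bounded and keeps the quotient controlled. I would then show that both the numerator $N(u)=1+(A(u),u)_{L^2(\Omega)}$, with $A(u)=\calG_{H^1}(Vu+\beta|u|^2u)$, and the denominator $D(u)=\norm{\calG_{H^1}u}_{H_0^1(\Omega)}^2$ are Lipschitz in $u$: write $D(u)-D(u^*)=(\calG_{H^1}(u-u^*),\,\calG_{H^1}u+\calG_{H^1}u^*)_{H_0^1(\Omega)}$ and $N(u)-N(u^*)=(A(u)-A(u^*),u)_{L^2(\Omega)}+(A(u^*),u-u^*)_{L^2(\Omega)}$, each of which is bounded by a constant times $\norm{u-u^*}_{H_0^1(\Omega)}$ by reusing the gradient-part estimates and Lemma~\ref{lem:Gu}. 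The elementary quotient bound then yields $|c(u)-c(u^*)|\le L_c\norm{u-u^*}_{H_0^1(\Omega)}$.

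Finally I would split $c(u)\calG_{H^1}u-c(u^*)\calG_{H^1}u^*=c(u)\,\calG_{H^1}(u-u^*)+\bigl(c(u)-c(u^*)\bigr)\calG_{H^1}u^*$ and bound the first summand by $|c(u)|\,C_3^2\norm{u-u^*}_{H_0^1(\Omega)}$ and the second by $|c(u)-c(u^*)|\,\norm{\calG_{H^1}u^*}_{H_0^1(\Omega)}$, using the boundedness of $c$ and of $\calG_{H^1}u^*$. Adding the two parts gives the claimed estimate with $L_g$ depending only on $C_1,C_2,C_3,V_{\max},\beta,\norm{u^*}_{H_0^1(\Omega)}$ and $C_u$, hence on $\Omega,d,\beta,V,u^*$ and $\norm{u_0}_{H_0^1(\Omega)}$; the smallness of $\norm{u-u^*}_{H_0^1(\Omega)}$ serves only to keep $u$ in a fixed neighborhood on which all these constants stay uniform.
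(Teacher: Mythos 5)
Your proposal is correct and follows essentially the same route as the paper's proof: both use the criticality $\nabla^{\mathcal{R}}_{H^1}E(u^*)=0$, split the Riemannian gradient into the ``gradient part'' $u+\calG_{H^1}(Vu+\beta|u|^2u)$ and the coefficient--quotient part, and prove Lipschitz bounds for each piece via Lemma~\ref{lem:Gu}, the embeddings \eqref{embed_L4}--\eqref{embedL43}, the factorization $u^3-(u^*)^3=(u-u^*)(u^2+uu^*+(u^*)^2)$, and an elementary quotient estimate. The only genuine difference is how the denominator is bounded below: the paper uses $\norm{\calG_{H^1}u}_{H_0^1(\Omega)}^2\geq \norm{\calG_{H^1}u^*}_{H_0^1(\Omega)}^2-L_B\norm{u-u^*}_{H_0^1(\Omega)}$, which is where the smallness of $\norm{u-u^*}_{H_0^1(\Omega)}$ enters, whereas you derive the uniform bound $\norm{\calG_{H^1}u}_{H_0^1(\Omega)}\geq C_u^{-1}$ from $(\calG_{H^1}u,u)_{H_0^1(\Omega)}=\norm{u}_{L^2(\Omega)}^2=1$ and Cauchy--Schwarz, a slightly cleaner device that renders the smallness assumption superfluous for this step.
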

	
	\begin{proof}
		Denote
		\begin{equation*}
			\gamma = \frac{1 + \left(\calG_{H^1} (V u + \beta |u|^2 u), u\right)_{L^2(\Omega)}}{\norm{\calG_{H^1} u}_{H_0^1(\Omega)}^2},
		\end{equation*}
		and
		\begin{equation}\label{gammastar}
			\gamma^* = \frac{1 + \left(\calG_{H^1} (V u^* + \beta |u^*|^2 u^*), u^*\right)_{L^2(\Omega)}}{\norm{\calG_{H^1} u^*}_{H_0^1(\Omega)}^2}.
		\end{equation}
		It holds that
		\begin{align*}
			& \norm{\nabla^{\mathcal{R}}_{H^1}E(u)}_{H_0^1(\Omega)} =  \norm{\nabla^{\mathcal{R}}_{H^1}E(u) - \nabla^{\mathcal{R}}_{H^1} 
				E(u^*)}_{H_0^1(\Omega)}\\
			= & \left\|u + \calG_{H^1} (V u +\beta |u|^2 u) - \gamma \calG_{H^1} u  - u^* - \calG_{H^1} (V u^* + \beta |u^*|^2 u^*) -  \gamma^* \calG_{H^1} u^*\right\|_{H_0^1(\Omega)}\\
			\leq & \norm{u - u^*}_{H_0^1(\Omega)} + \norm{\calG_{H^1} (V u - V u^*)}_{H_0^1(\Omega)} + \beta \norm{\calG_{H^1} (u^3 - (u^*)^3)}_{H_0^1(\Omega)}\\
			&\qquad\qquad + |\gamma - \gamma^*|\cdot \norm{\calG_{H^1}u}_{H_0^1(\Omega)} + \gamma^* \norm{\calG_{H^1}(u - u^*)}_{H_0^1(\Omega)} \\
			\leq & \norm{u - u^*}_{H_0^1(\Omega)} + C_3 \norm{ V(u - u^*)}_{L^2(\Omega)} + \beta \norm{u^3 - (u^*)^3}_{H^{-1}(\Omega)}\\
			& \qquad\qquad + |\gamma - \gamma^*|\cdot C_3 \norm{u}_{L^2(\Omega)} + C_3 \gamma^* \norm{u - u^*}_{L^2(\Omega)}\\
			\leq & (1+ C_3^2 V_{\max} + C_3^2 \gamma^*)\norm{u-u^*}_{H_0^1(\Omega)} \beta \norm{u^3 - (u^*)^3}_{H^{-1}(\Omega)} + C_3 |\gamma - \gamma^*|,
		\end{align*}
		where we used Lemma~\ref{lem:Gu} and the Poincar\'e inequality \eqref{poincare_ineq}. The rest of the proof is to estimate $\norm{u^3 - (u^*)^3}_{H^{-1}(\Omega)}$ and $|\gamma - \gamma^*|$. We have
		\begin{align*}
			& \norm{u^3 - (u^*)^3}_{H^{-1}(\Omega)} \leq C_2 \norm{u^3 - (u^*)^3}_{L^{4/3}(\Omega)} \\
			= & C_2 \norm{(u - u^*) (u^2 + u u^* + (u^*)^2)}_{L^{4/3}(\Omega)} \\
			\leq &  C_2 \norm{(u - u^*) u^2}_{L^{4/3}(\Omega)} +  C_2 \norm{(u - u^*) u u^*}_{L^{4/3}(\Omega)}  C_2 \norm{(u - u^*) (u^*)^2}_{L^{4/3}(\Omega)} \\
			= & C_2 \left(\int_\Omega (u - u^*)^{\frac{4}{3}} u^{\frac{8}{3}}\right)^{\frac{3}{4}} +  C_2 \left(\int_\Omega (u - u^*)^{\frac{4}{3}} u^{\frac{4}{3}} (u^*)^{\frac{4}{3}}\right)^{\frac{3}{4}}  +  C_2 \left(\int_\Omega (u - u^*)^{\frac{4}{3}} (u^*)^{\frac{8}{3}}\right)^{\frac{3}{4}} \\
			\leq & C_2 \norm{u- u^*}_{L^4(\Omega)} \norm{u}_{L^4(\Omega)}^2 + C_2 \norm{u-u^*}_{L^4(\Omega)} \norm{u}_{L^4(\Omega)} \norm{u^*}_{L^4(\Omega)} \\
			&\qquad\qquad + C_2 \norm{u-u^*}_{L^4(\Omega)} \norm{u^*}_{L^4(\Omega)}^2\\
			\leq & C_1^3 C_2 \norm{u-u^*}_{H_0^1(\Omega)} \left(\norm{u}_{H_0^1(\Omega)}^2 +  \norm{u}_{H_0^1(\Omega)} \norm{u^*}_{H_0^1(\Omega)} +  \norm{u^*}_{H_0^1(\Omega)}^2\right)\\
			\leq & L_u \norm{u-u^*}_{H_0^1(\Omega)},
		\end{align*}
		where we have used \eqref{embedL43} and $L_u = C_1^3 C_2  \left(C_u^2 +  C_u \norm{u^*}_{H_0^1(\Omega)} +  \norm{u^*}_{H_0^1(\Omega)}^2\right)$. This finishes the estimation of $\norm{u^3 - (u^*)^3}_{H^{-1}(\Omega)}$. Then we bound $|\gamma - \gamma^*|$. Set
		\begin{equation*}
			A = \left(\calG_{H^1} (V u +\beta |u|^2 u), u\right)_{L^2(\Omega)},\quad A^* = \left(\calG_{H^1} (V u^* + \beta |u^*|^2 u^*), u^*\right)_{L^2(\Omega)},
		\end{equation*}
		and
		\begin{equation*}
			B = \norm{\calG_{H^1} u}_{H_0^1(\Omega)}^2, \quad B^* = \norm{\calG_{H^1} u^*}_{H_0^1(\Omega)}^2.
		\end{equation*}
		One has that
		\begin{align*}
			& |A - A^*|\\
			= & \left| \left(\calG_{H^1} (V u), u\right)_{L^2(\Omega)} - \left(\calG_{H^1} (V u^*), u^*\right)_{L^2(\Omega)}\right| \\
			&\qquad\qquad + \beta \left| \left(\calG_{H^1} (|u|^2u), u\right)_{L^2(\Omega)} - \left(\calG_{H^1} (|u^*|^2 u^*), u^*\right)_{L^2(\Omega)}\right| \\
			\leq & \left| \left(\calG_{H^1} (V u), u -  u^*\right)_{L^2(\Omega)}\right| + \left| \left(\calG_{H^1}  (V u - V u^*), u^*\right)_{L^2(\Omega)} \right| \\
			& \qquad\qquad + \beta\left| \left(\calG_{H^1} (u^3), u -  u^*\right)_{L^2(\Omega)}\right| + \beta\left| \left(\calG_{H^1} (u^3 - (u^*)^3), u^*\right)_{L^2(\Omega)} \right| \\
			\leq & \norm{\calG_{H^1} (V u)}_{L^2(\Omega)} \norm{u -  u^*}_{L^2(\Omega)} + \norm{\calG_{H^1}  (V u - V u^*)}_{L^2(\Omega)} \norm{u^*}_{L^2(\Omega)}\\
			&\qquad\qquad + \beta\norm{\calG_{H^1} (u^3)}_{L^2(\Omega)}\norm{u -  u^*}_{L^2(\Omega)} + \beta\norm{\calG_{H^1} (u^3 - (u^*)^3)}_{L^2(\Omega)}\norm{u^*}_{L^2(\Omega)} \\
			\leq & C_3^2 \norm{\calG_{H^1} (V u)}_{H_0^1(\Omega)} \norm{u -  u^*}_{H_0^1(\Omega)} + C_3 \norm{\calG_{H^1}  (V u - V u^*)}_{H_0^1(\Omega)}\\
			&\qquad\qquad + \beta C_3^2\norm{\calG_{H^1} (u^3)}_{H_0^1(\Omega)}\norm{u -  u^*}_{H_0^1(\Omega)} + \beta C_3\norm{\calG_{H^1} (u^3 - (u^*)^3)}_{H_0^1(\Omega)} \\
			\leq & C_3^3 \norm{V u}_{L^2(\Omega)} \norm{u -  u^*}_{H_0^1(\Omega)} + C_3^2 \norm{V (u - u^*)}_{L^2(\Omega)}\\
			&\qquad\qquad + \beta C_3^2\norm{ u^3}_{H^{-1}(\Omega)}\norm{u -  u^*}_{H_0^1(\Omega)} + \beta C_3\norm{u^3 - (u^*)^3}_{H^{-1}(\Omega)} \\
			\leq & \left( 2 C_3^3 V_{\max} + \beta C_3^2 \norm{u^3}_{H^{-1}(\Omega)} + \beta C_3 L_u \right)\norm{u -  u^*}_{H_0^1(\Omega)} \\
			= & L_A \norm{u -  u^*}_{H_0^1(\Omega)},
		\end{align*}
		where we used \eqref{poincare_ineq}, Lemma~\ref{lem:Gu}~(ii), and
		\begin{equation*}
			\norm{u^3}_{H^{-1}(\Omega)}\leq C_2 \norm{u^3}_{L^{4/3}(\Omega)} = C_2 \norm{u}_{L^4(\Omega)}^3 \leq C_1^3  C_2 \norm{u}_{H_0^1(\Omega)}^3\leq  C_u^3 C_1^3  C_2, 
		\end{equation*}
		and set
		\begin{equation*}
			L_A =  2 C_3^3 V_{\max} + \beta C_3^2 \norm{u^3}_{H^{-1}(\Omega)} + \beta C_3 L_u. 
		\end{equation*}
		Note also that
		\begin{align*}
			|B - B^* | = & \left| \norm{\calG_{H^1} u}_{H_0^1(\Omega)}^2 - \norm{\calG_{H^1} u^*}_{H_0^1(\Omega)}^2 \right| \\
			\leq & \left| \left(\calG_{H^1} u, \calG_{H^1}(u -  u^*)\right)_{H_0^1(\Omega)}\right| + \left| \left(\calG_{H^1} (u - u^*), \calG_{H^1} u^*\right)_{H_0^1(\Omega)} \right| \\
			\leq & \norm{\calG_{H^1} u}_{H_0^1(\Omega)}\norm{\calG_{H^1}(u -  u^*)}_{H_0^1(\Omega)}+\norm{\calG_{H^1} (u - u^*)}_{H_0^1(\Omega)}\norm{\calG_{H^1} u^*}_{H_0^1(\Omega)}\\
			\leq & C_3^2 \norm{ u}_{L^2(\Omega)}\norm{u -  u^*}_{L^2(\Omega)} + C_3^2\norm{u - u^*}_{L^2(\Omega)}\norm{ u^*}_{L^2(\Omega)} \\
			\leq & 2 C_3^3 \norm{u -  u^*}_{H_0^1(\Omega)} =  L_B \norm{u -  u^*}_{H_0^1(\Omega)},
		\end{align*}
		where $L_B = 2 C_3^3$. Then it holds that
		\begin{align*}
			& |\gamma - \gamma^*| = \left|\frac{1 + A}{B} - \frac{1+A^*}{B^*}\right| \leq  \frac{1}{B B^*}\left(|B - B^*| + B^* |A - A^*| + A^* |B - B^*|\right)  \\
			\leq & \frac{1}{ B^* (B^* - L_B \norm{u -  u^*}_{H_0^1(\Omega)})}\left(L_B \norm{u -  u^*}_{H_0^1(\Omega)}  \right. \\ &\qquad\qquad\qquad\left.+ B^* L_A \norm{u -  u^*}_{H_0^1(\Omega)} + A^* L_B \norm{u -  u^*}_{H_0^1(\Omega)}\right)\\
			\leq & \frac{2(L_B +B^* L_A + A^* L_B )}{(B^*)^2}\norm{u -  u^*}_{H_0^1(\Omega)},
		\end{align*}
		for sufficiently small $\norm{u -  u^*}_{H_0^1(\Omega)}$, and the proof is hence completed.
	\end{proof}
	
	\subsection{Proof of Theorem~\ref{thm:local_conv_H1}}

	\begin{proof}[Proof of Theorem~\ref{thm:local_conv_H1}]
		Let us set
		\begin{equation*}
			e_n = u^* - u_n,\quad\text{and}\quad \delta_n = \norm{e_n}_{H_0^1(\Omega)}.
		\end{equation*}
		Let $L_g$ be the constant in Lemma~\ref{lem:Lg} and we assume that $\delta_n$ is small enough such that both Lemma~\ref{lem:Elocalconvex} and Lemma~\ref{lem:Lg} are satisfied. It can be computed that
		\begin{equation}\label{local_esti:1}
			\begin{split}
				& \norm{(u_n - u^*) - \alpha_n \nabla^{\mathcal{R}}_{H^1} E(u_n)}_{H_0^1(\Omega)}^2\\
				= & \norm{u_n - u^*}_{H_0^1(\Omega)}^2 - 2\alpha_n (u_n - u^*, \nabla^{\mathcal{R}}_{H^1}E(u_n))_{H_0^1(\Omega)}  +\alpha_n^2 \norm{\nabla^{\mathcal{R}}_{H^1}E(u_n)}_{H_0^1(\Omega)}^2 \\
				\leq & (1+ L_g^2 \alpha_n^2 )\delta_n^2  + 2\alpha_n (e_n, \nabla^{\mathcal{R}}_{H^1}E(u_n))_{H_0^1(\Omega)}\\
				= & (1+ L_g^2 \alpha_n^2 )\delta_n^2  + 2\alpha_n (e_n, \nabla_{H^1} E(u_n))_{H_0^1(\Omega)} \\ 
				&\qquad\qquad + 2\alpha_n (e_n, \nabla^{\mathcal{R}}_{H^1}E(u_n) - \nabla_{H^1} E(u_n))_{H_0^1(\Omega)}.
			\end{split}
		\end{equation}
		It follows from
		\begin{align*}
			E(u^*) - E(u_n) = & \int_\Omega \left(\frac{1}{2}|\nabla u_n + \nabla e_n|^2 +\frac{1}{2} V |u_n + e_n|^2 +\frac{\beta}{4}|u_n + e_n|^4 \right)\\
			&\qquad\qquad - \int_\Omega \left(\frac{1}{2}|\nabla u_n|^2 + \frac{1}{2} V |u_n|^2 +\frac{\beta}{4} |u_n|^4\right) \\
			= & \int_\Omega \left(\nabla u_n\cdot\nabla e_n + V u_n e_n + \beta |u_n|^2 u_n e_n \right) \\
			&\qquad\qquad + \frac{1}{2}\int_\Omega\left( |\nabla e_n|^2 +  V |e_n|^2 +\beta |u^*|^2 |e_n|^2 \right)\\
			& \qquad\qquad + \int_\Omega \left(\frac{3\beta}{2} |u_n|^2 |e_n|^2 + \beta u_n e_n^3 + \frac{\beta}{4}|e_n|^4 -\frac{\beta}{2} |u^*|^2 |e_n|^2 \right)\\
			=& (e_n, \nabla_{H^1} E(u_n))_{H_0^1(\Omega)} + \frac{1}{2}\int_\Omega \left( |\nabla e_n|^2 +  V |e_n|^2 +\beta |u^*|^2 |e_n|^2\right) \\
			& \qquad\qquad + \int_\Omega \left(\frac{3\beta}{2} |u_n|^2 |e_n|^2 + \beta u_n e_n^3 + \frac{\beta}{4} |e_n|^4 -\frac{\beta}{2} |u^*|^2 |e_n|^2\right),
		\end{align*}
		that
		\begin{equation}\label{local_esti:2}
			\begin{split}
				(e_n, \nabla_{H^1} E(u_n))_{H_0^1(\Omega)}
				= & E(u^*) - E(u_n) - \frac{1}{2}\int_\Omega \left( |\nabla e_n|^2 +  V |e_n|^2 +\beta |u^*|^2 |e_n|^2\right) \\
				&  - \int_\Omega \left(\frac{3\beta}{2} |u_n|^2 |e_n|^2 + \beta u_n e_n^3 + \frac{\beta}{4} |e_n|^4 -\frac{\beta}{2} |u^*|^2 |e_n|^2\right).
			\end{split}
		\end{equation}
		Define 
		\begin{equation*}
			\gamma_n = \frac{1 + \left(\calG_{H^1} (V u_n + \beta u_n^3), u_n\right)_{L^2(\Omega)}}{\norm{\calG_{H^1} u_n}_{H_0^1(\Omega)}^2},
		\end{equation*}
		and let $\gamma^*$ be defined as in \eqref{gammastar}. Then it holds that
		\begin{equation}\label{local_esti:3}
			\begin{split}
				& (e_n, \nabla^{\mathcal{R}}_{H^1}E(u_n) - \nabla_{H^1} E(u_n))_{H_0^1(\Omega)} =  -\gamma_n \left(e_n, \calG_{H^1} u_n\right)_{H_0^1(\Omega)} \\
				=&  -\gamma_n (e_n, u_n)_{L^2(\Omega)} = \frac{\gamma_n}{2}\left(\norm{u_n}_{L^2(\Omega)}^2 + \norm{e_n}_{L^2(\Omega)}^2 - \norm{u_n + e_n}_{L^2(\Omega)}^2\right) \\
				= & \frac{\gamma_n}{2} \norm{e_n}_{L^2(\Omega)}^2.
			\end{split}
		\end{equation}
		Combining \eqref{local_esti:1}, \eqref{local_esti:2}, and \eqref{local_esti:3}, we can conclude that
		\begin{equation}\label{local_esti:4}
			\begin{split}
				& \norm{(u_n - u^*) - \alpha_n \nabla^{\mathcal{R}}_{H^1}E(u_n)}_{H_0^1(\Omega)}^2 \\
				\leq & (1+ L_g^2 \alpha_n^2 )\delta_n^2 +2\alpha_n(E(u^*) - E(u_n))  + \alpha_n \gamma_n \norm{e_n}_{L^2(\Omega)}^2 \\
				& \qquad\qquad - \alpha_n\int_\Omega \left(|\nabla e_n|^2 +  V |e_n|^2 +\beta |u^*|^2 |e_n|^2\right)\\
				&\qquad\qquad - 2\alpha_n \int_\Omega \left(\frac{3\beta}{2} |u_n|^2 |e_n|^2 + \beta |e_n|^2 u_n e_n + \frac{\beta}{4} |e_n|^4 -\frac{\beta}{2} |u^*|^2 |e_n|^2\right).
			\end{split}
		\end{equation}
		We have $\gamma^* = \lambda_0$ since
		\begin{equation*}
			0 = \nabla^{\mathcal{R}}_{H^1}E(u^*) = \calG_{H^1}(-\Delta u^* + V u^* +\beta |u^*|^2 u^*) - \gamma^* \calG_{H^1} u^* = (\lambda_0 - \gamma^*) \calG_{H^1}u^*.
		\end{equation*}
		The minimality of $\lambda_0$ yields that
		\begin{equation}\label{local_esti:5}
			\int_\Omega |\nabla e_n|^2 +  V |e_n|^2 +\beta |u^*|^2 |e_n|^2 \geq \lambda_0\norm{e_n}_{L^2(\Omega)}^2 = \gamma^*\norm{e_n}_{L^2(\Omega)}^2.
		\end{equation}
		It also holds that
		\begin{equation}\label{local_esti:6}
			\begin{split}
				& -\int_\Omega \left( \frac{3\beta}{2} |u_n|^2 |e_n|^2 + \beta |e_n|^2 u_n e_n + \frac{\beta}{4} |e_n|^4 -\frac{\beta}{2} |u^*|^2 |e_n|^2\right) \\
				\leq & - \beta \int_\Omega\left( \frac{1}{2}(|u_n|^2 - |u^*|^2) |e_n|^2 + |e_n|^2 u_n e_n + \frac{1}{4} |e_n|^4\right) \\
				= & - \beta\int_\Omega \left(- \frac{1}{2} |e_n|^4 +\frac{1}{4} |e_n|^4\right) =  \frac{\beta}{4}\norm{e_n}_{L^4(\Omega)}^4 \leq \frac{\beta C_1^4}{4} \delta_n^4.
			\end{split}
		\end{equation}
		We consider sufficiently small $\delta_n$ such that $\gamma_n \leq \gamma^* + \frac{\lambda_1-\lambda_0}{4}$, which can be guaranteed by the proof of Lemma~\ref{lem:Lg}. It thus follows from \eqref{local_esti:4}, \eqref{local_esti:6}, and Lemma~\ref{lem:Elocalconvex} that 
		\begin{align*}
			& \norm{(u_n - u^*) - \alpha_n \nabla^{\mathcal{R}}_{H^1}E(u_n)}_{H_0^1(\Omega)}^2 \\
			\leq & (1+ L_g^2 \alpha_n^2 )\delta_n^2 - \alpha_n\frac{\lambda_1-\lambda_0}{2} \norm{e_n}_{L^2(\Omega)}^2 + \alpha_n \left(\gamma^* + \frac{\lambda_1-\lambda_0}{4}\right) \norm{e_n}_{L^2(\Omega)}^2\\
			& \qquad\qquad - \alpha_n\int_\Omega \left( |\nabla e_n|^2 +  V |e_n|^2 +\beta |u^*|^2 |e_n|^2\right)  + \frac{\alpha_n \beta C_1^4}{2} \delta_n^4\\
			\leq & (1+ L_g^2 \alpha_n^2 )\delta_n^2 + \alpha_n \left(\gamma^* - \frac{\lambda_1-\lambda_0}{4}\right) \norm{e_n}_{L^2(\Omega)}^2 \\
			&\qquad\qquad - \alpha_n\int_\Omega \left( |\nabla e_n|^2 +  V |e_n|^2 +\beta |u^*|^2 |e_n|^2\right)  + \frac{\alpha_n \beta C_1^4}{2} \delta_n^4.
		\end{align*}
		If $\gamma^* - \frac{\lambda_1-\lambda_0}{4}\leq 0$, then 
		\begin{align*}
			&\norm{(u_n - u^*) - \alpha_n \nabla^{\mathcal{R}}_{H^1}E(u_n)}_{H_0^1(\Omega)}^2\\
			\leq& (1+ L_g^2 \alpha_n^2 )\delta_n^2 - \alpha_n\int_\Omega \left(|\nabla e_n|^2 +  V |e_n|^2 +\beta |u^*|^2 |e_n|^2\right) + \frac{\alpha_n \beta C_1^4}{2} \delta_n^4\\
			\leq& (1+ L_g^2 \alpha_n^2 -\alpha_n)\delta_n^2  + \frac{\alpha_n \beta C_1^4}{2} \delta_n^4.
		\end{align*}
		If $\gamma^* - \frac{\lambda_1-\lambda_0}{4}> 0$, then with \eqref{local_esti:5}, it holds that
		\begin{align*}
			& \norm{(u_n - u^*) - \alpha_n \nabla^{\mathcal{R}}_{H^1}E(u_n)}_{H_0^1(\Omega)}^2 \\
			\leq & (1+ L_g^2 \alpha_n^2 )\delta_n^2 + \alpha_n \left(\gamma^* - \frac{\lambda_1-\lambda_0}{4}\right) \frac{1}{\gamma^*} \int_\Omega \left(|\nabla e_n|^2 +  V |e_n|^2 +\beta |u^*|^2 |e_n|^2\right) \\
			&\qquad\qquad - \alpha_n\int_\Omega \left(|\nabla e_n|^2 +  V |e_n|^2 +\beta |u^*|^2 |e_n|^2\right)  + \frac{\alpha_n \beta C_1^4}{2} \delta_n^4\\
			\leq & (1+ L_g^2 \alpha_n^2 )\delta_n^2 - \alpha_n \frac{\lambda_1 - \lambda_0}{4\gamma^*} \int_\Omega \left(|\nabla e_n|^2 +  V |e_n|^2 +\beta |u^*|^2 |e_n|^2\right) + \frac{\alpha_n \beta C_1^4}{2} \delta_n^4\\
			\leq & \left(1+ L_g^2 \alpha_n^2 - \alpha_n \frac{\lambda_1 - \lambda_0}{4\lambda_0} \right)\delta_n^2 + \frac{\alpha_n \beta C_1^4}{2} \delta_n^4.
		\end{align*}
		In both cases we have
		\begin{multline*}
			\norm{(u_n - u^*) - \alpha_n \nabla^{\mathcal{R}}_{H^1}E(u_n)}_{H_0^1(\Omega)}^2 \\
			\leq \left(1+ L_g^2 \alpha_n^2 - \alpha_n \min\left\{1,\frac{\lambda_1 - \lambda_0}{4\lambda_0}\right\}  + \frac{\alpha_n \beta C_1^4}{2} \delta_n^2\right)\delta_n^2,
		\end{multline*}
		i.e.,
		\begin{multline*}
			\norm{(u_n - u^*) - \alpha_n \nabla^{\mathcal{R}}_{H^1}E(u_n)}_{H_0^1(\Omega)}  \\
			\leq \left(1+ L_g^2 \alpha_n^2 - \alpha_n \min\left\{1,\frac{\lambda_1 - \lambda_0}{4\lambda_0}\right\}  + \frac{\alpha_n \beta C_1^4}{2} \delta_n^2\right)^{1/2}\delta_n.
		\end{multline*}
		Therefore, with $R_n$ defined in \eqref{def:Rn}, it holds that
		\begin{align*}
			\delta_{n+1} \leq & \norm{(u_n - u^*) - \alpha_n \nabla^{\mathcal{R}}_{H^1}E(u_n)}_{H_0^1(\Omega)} + \norm{R_n}_{H_0^1(\Omega)} \\
			\leq &  \left(1+ L_g^2 \alpha_n^2 - \alpha_n \min\left\{1,\frac{\lambda_1 - \lambda_0}{4\lambda_0}\right\}  + \frac{\alpha_n \beta C_1^4}{2} \delta_n^2\right)^{1/2}\delta_n  \\
			&\qquad\qquad + \frac{\alpha_n^2}{2} (C_u + \alpha_n C_g) C_3^2 L_g^2 \delta_n^2,
		\end{align*}
		where $\norm{R_n}_{H_0^1(\Omega)}$ is established using \eqref{eq:Rn} and Lemma~\ref{lem:Lg}. Note that we have assumed \eqref{asp:alpha_local}, that guarantees that
		\begin{equation*}
			\max_{\alpha\in[\alpha_{\min},\alpha_{\max}]} \left\{ 1+ L_g^2 \alpha^2 - \alpha \min\left\{1,\frac{\lambda_1 - \lambda_0}{4\lambda_0}\right\} \right\} < 1.
		\end{equation*}
		Thus $\delta_n$ sufficiently small, we can conclude that $\delta_{n+1}\leq C_\delta \delta_n$, where $C_\delta\in(0,1)$ is a constant. This proves the locally exponentially convergent rate.
	\end{proof}
	
	\subsection*{Acknowledgement}
	The work of ZC and JL is supported in part by National Science Foundation via awards DMS-2012286 and DMS-2309378. 
	YL thanks NSF for the support via the award DMS-2343135. XZ is supported by NSF  DMS-2208518.
	\bibliographystyle{amsxport}
	\bibliography{references}
	
	\appendix
	
	\section{Equivalent Norms}
	
	\begin{lemma}[Equivalence between $\norm{\cdot}_{H_0^1(\Omega)}$ and $\norm{\cdot}_{a_0(\Omega)}$]
		\label{lem:equiv_a0_H1}
		For any $u\in H_0^1(\Omega)$, it holds that
		\begin{equation*}
			\norm{u}_{H_0^1(\Omega)}\leq \norm{u}_{a_0(\Omega)} \leq C_{a_0} \norm{u}_{H_0^1(\Omega)},
		\end{equation*}
		where $C_{a_0} = \left(1 +  C_3^2 V_{\max}\right)^{1/2}$ and $C_3$ is the constant in the Poincaré inequality \eqref{poincare_ineq}.
	\end{lemma}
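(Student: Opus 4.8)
The plan is to prove this directly from the definitions of the two norms, since the gap between them is governed entirely by the potential term $\int_\Omega V|u|^2$. The starting observation is the identity
\begin{equation*}
    \norm{u}_{a_0(\Omega)}^2 = \int_\Omega |\nabla u|^2 + V|u|^2 = \norm{u}_{H_0^1(\Omega)}^2 + \int_\Omega V|u|^2,
\end{equation*}
which follows immediately by expanding $(u,u)_{a_0(\Omega)}$ and recognizing $(u,u)_{H_0^1(\Omega)} = \int_\Omega |\nabla u|^2$. Both bounds in the statement then reduce to controlling the single nonnegative quantity $\int_\Omega V|u|^2$ from below and above.

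First I would establish the lower bound. Since we have assumed $V \geq 0$, the term $\int_\Omega V|u|^2$ is nonnegative, so the identity above yields $\norm{u}_{a_0(\Omega)}^2 \geq \norm{u}_{H_0^1(\Omega)}^2$, and taking square roots gives $\norm{u}_{H_0^1(\Omega)} \leq \norm{u}_{a_0(\Omega)}$.

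For the upper bound I would use the uniform bound $0 \leq V \leq V_{\max}$ together with the Poincar\'e inequality \eqref{poincare_ineq}. Specifically, $\int_\Omega V|u|^2 \leq V_{\max}\int_\Omega |u|^2 = V_{\max}\norm{u}_{L^2(\Omega)}^2 \leq V_{\max}C_3^2\norm{u}_{H_0^1(\Omega)}^2$. Substituting this into the identity gives
\begin{equation*}
    \norm{u}_{a_0(\Omega)}^2 \leq \norm{u}_{H_0^1(\Omega)}^2 + C_3^2 V_{\max}\norm{u}_{H_0^1(\Omega)}^2 = \left(1 + C_3^2 V_{\max}\right)\norm{u}_{H_0^1(\Omega)}^2,
\end{equation*}
and taking square roots produces the claimed constant $C_{a_0} = (1 + C_3^2 V_{\max})^{1/2}$.

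There is no real obstacle here: the entire argument is a two-line bounding of the potential term, and the only nontrivial ingredient is the Poincar\'e inequality, which is already available as \eqref{poincare_ineq}. The essential point worth stating clearly is that the equivalence is quantitative with a constant depending only on $\Omega$, $d$ (through $C_3$), and $V_{\max}$, which is exactly what is needed to justify interchanging the two norms throughout the $a_0$-scheme analysis.
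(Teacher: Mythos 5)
Your proof is correct and is exactly the argument the paper intends: expand the definitions, use $V\geq 0$ for the lower bound, and use $0\leq V\leq V_{\max}$ together with the Poincar\'e inequality \eqref{poincare_ineq} for the upper bound. The paper's own proof is just a one-line sketch of this same reasoning, so your write-up is simply a fully detailed version of it.
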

	
	\begin{proof}
		Use the definition of $\norm{\cdot}_{H_0^1(\Omega)}$ and $\norm{\cdot}_{a_0(\Omega)}$ as well as the Poincaré inequality.
	\end{proof}
	
	\begin{lemma}[Equivalence between $\norm{\cdot}_{H_0^1(\Omega)}$ and $\norm{\cdot}_{a_{u^*}(\Omega)}$]\label{lem:equiv_au_H1}
		Let $u^*$ be the ground state of the problem \eqref{eigen_problem}. For any $u\in H_0^1(\Omega)$, it holds that
		\begin{equation*}
			\norm{u}_{H_0^1(\Omega)}\leq \norm{u}_{a_{u^*}(\Omega)} \leq C_{a_{u^*}} \norm{u}_{H_0^1(\Omega)},
		\end{equation*}
		where $C_{a_{u^*}} = \left(1+ V_{\max} C_3^2 + \beta C_1^2 \norm{u^*}_{L^4(\Omega)}^2 \right)^{1/2}$, where $C_1$ and $C_3$ are the constants in \eqref{embed_L4} and \eqref{poincare_ineq}, respectively.
	\end{lemma}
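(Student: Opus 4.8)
The plan is to treat the two inequalities separately, exactly as in Lemma~\ref{lem:equiv_a0_H1}, the only new ingredient being the extra nonlinear term $\beta\int_\Omega |u^*|^2|u|^2$ coming from the definition
\begin{equation*}
    \norm{u}_{a_{u^*}(\Omega)}^2 = \int_\Omega |\nabla u|^2 + V|u|^2 + \beta|u^*|^2|u|^2.
\end{equation*}

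For the lower bound, I would simply observe that each of the last two integrands is nonnegative, since $V\geq 0$, $\beta\geq 0$, and $|u^*|^2\geq 0$. Dropping them yields $\norm{u}_{a_{u^*}(\Omega)}^2\geq \int_\Omega|\nabla u|^2 = \norm{u}_{H_0^1(\Omega)}^2$, which gives the left inequality immediately.

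For the upper bound I would estimate the three integrands one at a time. The gradient term is exactly $\norm{u}_{H_0^1(\Omega)}^2$. For the potential term I would use $0\leq V\leq V_{\max}$ together with the Poincar\'e inequality \eqref{poincare_ineq} to get $\int_\Omega V|u|^2 \leq V_{\max}\norm{u}_{L^2(\Omega)}^2 \leq V_{\max}C_3^2\norm{u}_{H_0^1(\Omega)}^2$. The new term is the only one requiring slightly more care: here I would apply the Cauchy--Schwarz inequality (H\"older with conjugate exponents $2$ and $2$) to factor $\int_\Omega |u^*|^2|u|^2 \leq \norm{u^*}_{L^4(\Omega)}^2\,\norm{u}_{L^4(\Omega)}^2$, and then invoke the Sobolev embedding \eqref{embed_L4} to replace $\norm{u}_{L^4(\Omega)}^2$ by $C_1^2\norm{u}_{H_0^1(\Omega)}^2$, giving $\beta\int_\Omega|u^*|^2|u|^2 \leq \beta C_1^2\norm{u^*}_{L^4(\Omega)}^2\norm{u}_{H_0^1(\Omega)}^2$. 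Here $\norm{u^*}_{L^4(\Omega)}$ is a finite constant because $u^*\in H_0^1(\Omega)\subset L^4(\Omega)$. Summing the three bounds yields $\norm{u}_{a_{u^*}(\Omega)}^2 \leq (1 + V_{\max}C_3^2 + \beta C_1^2\norm{u^*}_{L^4(\Omega)}^2)\norm{u}_{H_0^1(\Omega)}^2$, and taking square roots produces the stated constant $C_{a_{u^*}}$.

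There is no genuine obstacle in this argument; it is a routine norm-equivalence estimate. The only point meriting a moment's attention is the nonlinear term, where one must combine H\"older's inequality with the $H_0^1\hookrightarrow L^4$ embedding rather than relying on Poincar\'e alone (as sufficed in the $a_0$ case), and must record that $\norm{u^*}_{L^4(\Omega)}$ is finite so that the resulting constant is well-defined.
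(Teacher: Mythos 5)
Your proof is correct and matches the paper's argument essentially line for line: both drop the nonnegative terms for the lower bound, and for the upper bound both combine $0\leq V\leq V_{\max}$ with Poincar\'e for the potential term and Cauchy--Schwarz plus the embedding \eqref{embed_L4} (applied only to $\norm{u}_{L^4(\Omega)}^2$, keeping $\norm{u^*}_{L^4(\Omega)}^2$ as a constant) for the nonlinear term. No discrepancies to report.
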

	
	\begin{proof}
		It holds that
		\begin{align*}
			\norm{u}_{H_0^1(\Omega)}^2 & \leq \norm{u}_{a_{u^*}(\Omega)}^2  = \int_\Omega |\nabla u|^2 + V|u|^2 + \beta |u^*|^2 |u|^2 \\
			& \leq \norm{u}_{H_0^1(\Omega)}^2 + V_{\max} \norm{u}_{L^2(\Omega)}^2 + \beta \norm{u^*}_{L^4(\Omega)}^2 \norm{u}_{L^4(\Omega)}^2 \\
			& \leq \left(1+ V_{\max} C_3^2 + \beta C_1^2 \norm{u^*}_{L^4(\Omega)}^2 \right) \norm{u}_{H_0^1(\Omega)}^2.
		\end{align*}
	\end{proof}
	
	\begin{lemma}[Stability of $\norm{\cdot}_{a_u(\Omega)}$ around $u^*$]\label{lem:stab_au}
		Let $u^*$ be the ground state of the problem \eqref{eigen_problem}. For any $\epsilon>0$, there exists $\delta(\epsilon)>0$, such that
		\begin{equation*}
			\frac{1}{1+\epsilon} \norm{z}_{a_{u}}\leq \norm{z}_{a_{u^*}} \leq (1+\epsilon) \norm{z}_{a_{u}},\quad \forall~z\in H^1(\Omega),
		\end{equation*}
		holds for any $u\in H_0^1(\Omega)$ with $\norm{u - u^*}_{H_0^1(\Omega)}\leq \delta(\epsilon)$.
	\end{lemma}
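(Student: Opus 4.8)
The plan is to treat this as a perturbation estimate. The two inner products differ only in their zeroth-order weight, $\beta|u|^2$ versus $\beta|u^*|^2$, so the two squared norms differ by the single term
\begin{equation*}
\norm{z}_{a_u(\Omega)}^2 - \norm{z}_{a_{u^*}(\Omega)}^2 = \beta\int_\Omega \left(|u|^2 - |u^*|^2\right)|z|^2,
\end{equation*}
and the whole lemma reduces to showing that this term is small relative to $\norm{z}_{a_{u^*}(\Omega)}^2$ whenever $u$ is $H_0^1$-close to $u^*$.

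First I would factor $|u|^2 - |u^*|^2 = (u - u^*)(u + u^*)$ and apply the generalized H\"older inequality with four $L^4$ factors to obtain $\left|\int_\Omega (|u|^2 - |u^*|^2)|z|^2\right| \leq \norm{u - u^*}_{L^4(\Omega)}\norm{u + u^*}_{L^4(\Omega)}\norm{z}_{L^4(\Omega)}^2$. The Sobolev embedding \eqref{embed_L4} converts each $L^4$ norm into an $H_0^1$ norm, and the triangle inequality gives $\norm{u + u^*}_{H_0^1(\Omega)} \leq \delta + 2\norm{u^*}_{H_0^1(\Omega)}$ under the hypothesis $\norm{u - u^*}_{H_0^1(\Omega)} \leq \delta$. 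Collecting constants yields $\left|\norm{z}_{a_u(\Omega)}^2 - \norm{z}_{a_{u^*}(\Omega)}^2\right| \leq \beta C_1^4 \delta(\delta + 2\norm{u^*}_{H_0^1(\Omega)})\norm{z}_{H_0^1(\Omega)}^2$. I would then invoke Lemma~\ref{lem:equiv_au_H1}, which gives $\norm{z}_{H_0^1(\Omega)} \leq \norm{z}_{a_{u^*}(\Omega)}$, to arrive at the clean relative bound $\left|\norm{z}_{a_u(\Omega)}^2 - \norm{z}_{a_{u^*}(\Omega)}^2\right| \leq K(\delta)\norm{z}_{a_{u^*}(\Omega)}^2$, where $K(\delta) := \beta C_1^4 \delta(\delta + 2\norm{u^*}_{H_0^1(\Omega)}) \to 0$ as $\delta \to 0$.

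The remaining step is bookkeeping: starting from $(1 - K(\delta))\norm{z}_{a_{u^*}(\Omega)}^2 \leq \norm{z}_{a_u(\Omega)}^2 \leq (1 + K(\delta))\norm{z}_{a_{u^*}(\Omega)}^2$, I would take square roots and choose $\delta(\epsilon)$ small enough that both $\sqrt{1 + K(\delta)} \leq 1 + \epsilon$ and $(1 - K(\delta))^{-1/2} \leq 1 + \epsilon$ hold. Since $K(\delta) \to 0$, any $\delta$ with $K(\delta) \leq \min\{(1+\epsilon)^2 - 1,\ 1 - (1+\epsilon)^{-2}\}$ suffices; in particular this forces $K(\delta) < 1$, keeping the lower factor positive. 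This produces exactly the stated two-sided comparison with multiplicative constant $1 + \epsilon$.

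This is essentially a continuity computation, so I do not expect a genuine obstacle; the only points demanding care are the correct pairing of exponents in the fourfold H\"older estimate and the elementary but fussy passage from the squared-norm inequality to the norm inequality with constant $1+\epsilon$. One caveat worth flagging is that the chain $\norm{z}_{L^4(\Omega)} \leq C_1\norm{z}_{H_0^1(\Omega)} \leq C_1\norm{z}_{a_{u^*}(\Omega)}$ relies on \eqref{embed_L4} and Lemma~\ref{lem:equiv_au_H1}, both of which are genuine only for $z \in H_0^1(\Omega)$ (where Poincar\'e makes the gradient seminorm a norm); accordingly the estimate, and the use to which it is put in the $a_u$-scheme analysis, is most naturally read with $z$ ranging over $H_0^1(\Omega)$.
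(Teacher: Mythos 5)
Your proposal is correct and follows essentially the same route as the paper's proof: the paper likewise reduces the claim to $\bigl|\norm{z}_{a_{u^*}}^2 - \norm{z}_{a_u}^2\bigr| = \beta\bigl|\int_\Omega (|u|^2-|u^*|^2)|z|^2\bigr|$, factors the difference, applies H\"older with the embedding \eqref{embed_L4}, and bounds $\norm{z}_{H_0^1(\Omega)}$ by $\norm{z}_{a_{u^*}(\Omega)}$ to get the same relative bound $\beta C_1^4\norm{u-u^*}_{H_0^1(\Omega)}\bigl(2\norm{u^*}_{H_0^1(\Omega)}+\norm{u-u^*}_{H_0^1(\Omega)}\bigr)\norm{z}_{a_{u^*}(\Omega)}^2$, then concludes by taking $\delta(\epsilon)$ small enough. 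The only differences are that you spell out the square-root bookkeeping that the paper compresses into its final sentence, and you correctly flag that the quantifier should read $z\in H_0^1(\Omega)$ rather than $H^1(\Omega)$, which is evidently a typo in the statement.
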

	
	\begin{proof}
		We have that
		\begin{align*}
			\left|\norm{z}_{a_{u^*}}^2 - \norm{z}_{a_u}^2\right| & = \beta \left|\int_\Omega (|u|^2 - |u^*|^2)\cdot |z|^2 \right| \leq \beta\int_\Omega |u - u^*| \cdot|u + u^*|\cdot |z|^2\\
			&\leq \beta \norm{u - u^*}_{L^4(\Omega)} \norm{u + u^*}_{L^4(\Omega)} \norm{z}_{L^4(\Omega)}^2 \\
			& \leq \beta C_1^4 \norm{u - u^*}_{H_0^1(\Omega)} \left(2\norm{u^*}_{H_0^1(\Omega)} + \norm{u - u^*}_{H_1^0(\Omega)}\right)\norm{z}_{a_{u^*}(\Omega)}^2.
		\end{align*}
		Thus, it suffices to set $\delta(\epsilon)$ as small enough.
	\end{proof}

	\section{Proof of Theorem~\ref{thm:local_conv_au}}
	\label{sec:pf_local_au}
	
	We display the proof of Theorem~\ref{thm:local_conv_au} in this section. According to \cite{henning2020sobolev}*{Lemma 4.7}, the iterates $\{u_n\}_{n=0}^\infty$ generated by the $a_u$-scheme yields energy decay and hence $\norm{u_n}_{H_0^1(\Omega)}\leq C_u$ as long as $\alpha_{\max}\leq C_\alpha$, where $C_\alpha$ and $C_u$ are constants depending only on $\Omega$, $d$, $\beta$, $V$, and $\norm{u_0}_{H_0^1(\Omega)}$. We would need a similar result as Lemma~\ref{lem:Lg}, for which we prove the following lemma.
	
	\begin{lemma}\label{lem:Gau}
		Let $u^*$ be the ground state of the problem \eqref{eigen_problem}. For any $u\in\calM$ with $\norm{u}_{H_0^1(\Omega)}\leq C_u$, it holds that
		\begin{equation*}
			\norm{\calG_{a_u} u - \calG_{a_{u^*}} u^*}_{a_{u^*}(\Omega)} \leq L_{\calG} \norm{u - u^*}_{a_{u^*}(\Omega)},
		\end{equation*}
		where $L_\calG =  \left(C_3^2  + \beta C_1^4 C_3 \left(C_u + \norm{u^*}_{H_0^1(\Omega)}\right)\right)$.
	\end{lemma}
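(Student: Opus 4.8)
The plan is to work entirely with the $a_{u^*}$-inner product. Write $g = \calG_{a_u} u$ and $g^* = \calG_{a_{u^*}} u^*$, and estimate $\norm{g - g^*}_{a_{u^*}(\Omega)}$ by testing the defining relations of the two Green's operators against $z = g - g^*$. The only difference between the two inner products is the weight in the quartic term, namely $\beta|u|^2$ versus $\beta|u^*|^2$, so I expect $g - g^*$ to be controlled by $u - u^*$ through two contributions: the mismatch of the source terms $u$ and $u^*$, and the mismatch of the bilinear forms.

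First I would derive the key identity. Since $(z, g)_{a_u(\Omega)} = (z,u)_{L^2(\Omega)}$ and $(z, g)_{a_{u^*}(\Omega)} = (z, g)_{a_u(\Omega)} + \beta \int_\Omega (|u^*|^2 - |u|^2)\, g z$, while $(z, g^*)_{a_{u^*}(\Omega)} = (z, u^*)_{L^2(\Omega)}$, the choice $z = g - g^*$ gives
\begin{equation*}
    \norm{g - g^*}_{a_{u^*}(\Omega)}^2 = (g - g^*, u - u^*)_{L^2(\Omega)} + \beta \int_\Omega (|u^*|^2 - |u|^2)\, g\, (g - g^*).
\end{equation*}
Everything then reduces to bounding these two terms.

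For the first term I would apply Cauchy--Schwarz in $L^2(\Omega)$, the Poincar\'e inequality \eqref{poincare_ineq}, and the domination $\norm{\cdot}_{H_0^1(\Omega)} \le \norm{\cdot}_{a_{u^*}(\Omega)}$ from Lemma~\ref{lem:equiv_au_H1}, producing the factor $C_3^2 \norm{u - u^*}_{a_{u^*}(\Omega)} \norm{g - g^*}_{a_{u^*}(\Omega)}$. For the second term I would factor $|u^*|^2 - |u|^2 = (u^* - u)(u^* + u)$ and apply the generalized H\"older inequality with four $L^4(\Omega)$ factors followed by the embedding \eqref{embed_L4}. Three of the resulting factors are immediate: $\norm{u^* - u}_{L^4(\Omega)} \le C_1 \norm{u-u^*}_{a_{u^*}(\Omega)}$, $\norm{u^* + u}_{L^4(\Omega)} \le C_1(C_u + \norm{u^*}_{H_0^1(\Omega)})$, and $\norm{g - g^*}_{L^4(\Omega)} \le C_1 \norm{g-g^*}_{a_{u^*}(\Omega)}$.

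The remaining factor $\norm{g}_{L^4(\Omega)}$ needs an a priori bound, which is the only mildly delicate point. Testing $(z,g)_{a_u(\Omega)} = (z,u)_{L^2(\Omega)}$ against $z = g$ gives $\norm{g}_{a_u(\Omega)}^2 = (g,u)_{L^2(\Omega)} \le \norm{g}_{L^2(\Omega)}\norm{u}_{L^2(\Omega)} = \norm{g}_{L^2(\Omega)} \le C_3\norm{g}_{H_0^1(\Omega)} \le C_3\norm{g}_{a_u(\Omega)}$, using $\norm{u}_{L^2(\Omega)} = 1$ and $\norm{\cdot}_{H_0^1(\Omega)}\le\norm{\cdot}_{a_u(\Omega)}$ (valid since $V,\beta|u|^2\ge 0$); hence $\norm{g}_{a_u(\Omega)}\le C_3$ and $\norm{g}_{L^4(\Omega)}\le C_1 C_3$. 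Collecting the four factors bounds the second term by $\beta C_1^4 C_3(C_u + \norm{u^*}_{H_0^1(\Omega)})\norm{u-u^*}_{a_{u^*}(\Omega)}\norm{g-g^*}_{a_{u^*}(\Omega)}$. Adding the two contributions and dividing by $\norm{g-g^*}_{a_{u^*}(\Omega)}$ yields precisely $L_\calG = C_3^2 + \beta C_1^4 C_3(C_u + \norm{u^*}_{H_0^1(\Omega)})$. The main obstacle is simply setting up the identity correctly with the crossed inner products and then tracking constants through the four-factor H\"older estimate together with the a priori $L^4$ bound on $g$; there is no genuine analytic difficulty beyond this bookkeeping.
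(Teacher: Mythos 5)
Your proposal is correct and follows essentially the same route as the paper's proof: the identity $\norm{g-g^*}_{a_{u^*}(\Omega)}^2 = (g-g^*,u-u^*)_{L^2(\Omega)} + \beta\int_\Omega(|u^*|^2-|u|^2)g(g-g^*)$, the same four-factor H\"older/embedding estimate, and the same a priori bound $\norm{g}_{H_0^1(\Omega)}\leq C_3$, yielding the identical constant $L_\calG$. The only (cosmetic) difference is that you derive the key identity directly from the weak (bilinear-form) definitions of the Green's operators, whereas the paper subtracts the strong-form PDEs first and then tests; the estimates are otherwise identical.
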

	
	\begin{proof}
		Denote $g = \calG_{a_u} u$ and $g^* = \calG_{a_{u^*}} u^*$.  Then
		\begin{equation*}
			-\Delta g + V g + \beta |u|^2 g = u,\quad  -\Delta g^* + V g^* + \beta |u^*|^2 g^* = u^*,
		\end{equation*}
		which lead to
		\begin{equation*}
			-\Delta(g - g^*) + V(g - g^*) + \beta |u^*|^2 (g - g^*) = (u - u^*) + \beta (|u^*|^2 - |u|^2) g.
		\end{equation*}
		Therefore,
		\begin{align*}
			\norm{g - g^*}_{a_{u^*}(\Omega)}^2 & = \int_\Omega(u - u^*)(g - g^*) + \int_\Omega \beta (|u^*|^2 - |u|^2) g(g - g^*) \\
			& \leq \norm{u-u^*}_{L^2(\Omega)} \norm{g - g^*}_{L^2(\Omega)}  \\
			& \qquad\quad + \beta \norm{u-u^*}_{L^4(\Omega)}\norm{u + u^*}_{L^4(\Omega)} \norm{g}_{L^4(\Omega)} \norm{g - g^*}_{L^4(\Omega)}\\
			& \leq C_3^2 \norm{u- u^*}_{a_{u^*}(\Omega)} \norm{g - g^*}_{a_{u^*}(\Omega)} \\
			&\qquad\quad + \beta C_1^4 \norm{u - u^*}_{H_0^1(\Omega)}\norm{u + u^*}_{H_0^1(\Omega)} \norm{g}_{H_0^1(\Omega)} \norm{g - g^*}_{H_0^1(\Omega)},
		\end{align*}
		where we used \eqref{poincare_ineq} and \eqref{embed_L4}. This implies that 
		\begin{equation*}
			\norm{g - g^*}_{a_{u^*}(\Omega)} \leq \left(C_3^2  + \beta C_1^4 \left(C_u + \norm{u^*}_{H_0^1(\Omega)}\right)\norm{g}_{H_0^1(\Omega)}\right) \norm{u - u^*}_{a_{u^*}(\Omega)}.
		\end{equation*}
		Then we can obtain the desired results by noticing
		\begin{equation}\label{local_esti:7}
			\norm{\calG_{a_u}u}_{H_0^1(\Omega)} = \norm{g}_{H_0^1(\Omega)} \leq C_3,
		\end{equation}
		from
		\begin{multline*}
			\norm{g}_{H_0^1(\Omega)}^2 \leq \norm{g}_{a_u(\Omega)}^2 = (\calG_{a_u} u, g )_{a_u(\Omega)} \\
			= (u, g)_{L^2(\Omega)} \leq  \norm{u}_{L^2(\Omega)} \norm{g}_{L^2(\Omega)} \leq C_3 \norm{g}_{H_0^1(\Omega)}.
		\end{multline*}
	\end{proof}
	
	The next lemma establishes a similar estimate as in Lemma~\ref{lem:Lg}.
	
	\begin{lemma}\label{lem:Lg_au}
		There exists some constant $L_g$ depending only on $\Omega$, $d$, $\beta$, $V$, $u^*$ and $\norm{u_0}_{H_0^1(\Omega)}$ such that 
		\begin{equation*}
			\norm{\nabla^{\mathcal{R}}_{a_u}E(u)}_{a_{u^*}(\Omega)}\leq L_g\norm{u - u^*}_{a_{u^*}(\Omega)},
		\end{equation*}
		holds for all $u\in \calM$ as long as $\norm{u}_{H_0^1(\Omega)}\leq C_u$ and $\norm{u - u^*}_{a_{u^*}(\Omega)}$ is sufficiently small.
	\end{lemma}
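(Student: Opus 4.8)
The plan is to mirror the proof of Lemma~\ref{lem:Lg}, with the essential new difficulty that here the inner product itself varies with the base point, so I must control the two metrics $(\cdot,\cdot)_{a_u}$ and $(\cdot,\cdot)_{a_{u^*}}$ simultaneously. Recall that $\nabla^{\mathcal{R}}_{a_u}E(u) = u - \gamma\,g$ with $g = \calG_{a_u}u$ and $\gamma = \norm{g}_{a_u(\Omega)}^{-2}$, and likewise $\nabla^{\mathcal{R}}_{a_{u^*}}E(u^*) = u^* - \gamma^* g^*$ with $g^* = \calG_{a_{u^*}}u^*$ and $\gamma^* = \norm{g^*}_{a_{u^*}(\Omega)}^{-2}$. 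Since $u^*$ is the ground state it is a critical point, so $\nabla^{\mathcal{R}}_{a_{u^*}}E(u^*) = 0$. I would therefore write
$$\nabla^{\mathcal{R}}_{a_u}E(u) = \nabla^{\mathcal{R}}_{a_u}E(u) - \nabla^{\mathcal{R}}_{a_{u^*}}E(u^*) = (u - u^*) - \gamma(g - g^*) - (\gamma - \gamma^*)g^*,$$
and apply the triangle inequality in the $\norm{\cdot}_{a_{u^*}(\Omega)}$ norm. This reduces the estimate to three terms: $\norm{u-u^*}_{a_{u^*}(\Omega)}$, which is already linear with coefficient one; $\gamma\,\norm{g - g^*}_{a_{u^*}(\Omega)}$; and $|\gamma - \gamma^*|\cdot\norm{g^*}_{a_{u^*}(\Omega)}$.

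For the second term, Lemma~\ref{lem:Gau} directly supplies $\norm{g - g^*}_{a_{u^*}(\Omega)} \leq L_{\calG}\norm{u - u^*}_{a_{u^*}(\Omega)}$, so it only remains to bound $\gamma$ by a constant, i.e.\ to bound $\norm{g}_{a_u(\Omega)}$ away from zero. I would combine the stability estimate of Lemma~\ref{lem:stab_au} (comparing $\norm{\cdot}_{a_u}$ and $\norm{\cdot}_{a_{u^*}}$ for $u$ near $u^*$) with the closeness $\bigl|\,\norm{g}_{a_{u^*}(\Omega)} - \norm{g^*}_{a_{u^*}(\Omega)}\,\bigr| \leq \norm{g - g^*}_{a_{u^*}(\Omega)} \leq L_{\calG}\norm{u-u^*}_{a_{u^*}(\Omega)}$ coming again from Lemma~\ref{lem:Gau}; this guarantees $\norm{g}_{a_u(\Omega)} \geq \tfrac12\norm{g^*}_{a_{u^*}(\Omega)} > 0$ once $\norm{u - u^*}_{a_{u^*}(\Omega)}$ is sufficiently small, and hence $\gamma \leq 4\gamma^*$.

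The third term is the crux. I would use
$$|\gamma - \gamma^*| = \frac{\bigl|\,\norm{g^*}_{a_{u^*}(\Omega)}^2 - \norm{g}_{a_u(\Omega)}^2\,\bigr|}{\norm{g}_{a_u(\Omega)}^2\,\norm{g^*}_{a_{u^*}(\Omega)}^2},$$
whose denominator is bounded below by the positive lower bound just established. For the numerator I split $\norm{g^*}_{a_{u^*}(\Omega)}^2 - \norm{g}_{a_u(\Omega)}^2 = \bigl(\norm{g^*}_{a_{u^*}(\Omega)}^2 - \norm{g}_{a_{u^*}(\Omega)}^2\bigr) + \bigl(\norm{g}_{a_{u^*}(\Omega)}^2 - \norm{g}_{a_u(\Omega)}^2\bigr)$. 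The first difference factors as $\bigl(\norm{g^*}_{a_{u^*}(\Omega)} - \norm{g}_{a_{u^*}(\Omega)}\bigr)\bigl(\norm{g^*}_{a_{u^*}(\Omega)} + \norm{g}_{a_{u^*}(\Omega)}\bigr)$ and is again controlled by Lemma~\ref{lem:Gau}; the second difference equals $\beta\int_\Omega(|u^*|^2 - |u|^2)|g|^2$, which I estimate exactly as in the proof of Lemma~\ref{lem:stab_au} via Hölder, the embedding \eqref{embed_L4}, and the uniform bound $\norm{g}_{H_0^1(\Omega)}\leq C_3$ from \eqref{local_esti:7}, yielding a bound of the form $C\norm{u-u^*}_{H_0^1(\Omega)} \leq C\norm{u-u^*}_{a_{u^*}(\Omega)}$ through Lemma~\ref{lem:equiv_au_H1}. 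Collecting the three estimates gives the claimed linear bound with an explicit constant $L_g$. The main obstacle I anticipate is purely the bookkeeping of the two base-point-dependent norms at once: unlike in Lemma~\ref{lem:Lg}, both the Green's operator and the ambient metric move with $u$, so I must repeatedly pass between $\norm{\cdot}_{a_u}$, $\norm{\cdot}_{a_{u^*}}$, and $\norm{\cdot}_{H_0^1}$ while verifying that each conversion costs only a constant factor and that $\norm{g}_{a_u(\Omega)}$ remains bounded away from zero throughout the chosen neighborhood of $u^*$.
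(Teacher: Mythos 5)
Your proof is correct, and its skeleton matches the paper's: both use $\nabla^{\mathcal{R}}_{a_{u^*}}E(u^*)=0$ to rewrite $\nabla^{\mathcal{R}}_{a_u}E(u)$ as a difference, split it by the triangle inequality into $u-u^*$, a $(\gamma-\gamma^*)$ term, and a Green's-operator difference controlled by Lemma~\ref{lem:Gau}. The genuine divergence is in how $|\gamma-\gamma^*|$ is estimated. The paper never compares the two metrics $\norm{\cdot}_{a_u(\Omega)}$ and $\norm{\cdot}_{a_{u^*}(\Omega)}$ at this stage: it uses the Green's operator identity $\norm{\calG_{a_u}u}_{a_u(\Omega)}^2 = \left(\calG_{a_u}u,u\right)_{L^2(\Omega)}$ to write $\gamma = 1/\left(\calG_{a_u}u,u\right)_{L^2(\Omega)}$, so the difference of denominators becomes a difference of $L^2$-pairings, which splits bilinearly as $\left(\calG_{a_u}u, u-u^*\right)_{L^2(\Omega)} + \left(\calG_{a_u}u-\calG_{a_{u^*}}u^*, u^*\right)_{L^2(\Omega)}$ and is handled by Cauchy--Schwarz, Poincar\'e, \eqref{local_esti:7}, and Lemma~\ref{lem:Gau} alone; the lower bound on the denominator then comes for free from the closeness of the two pairings. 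You instead keep $\gamma = \norm{\calG_{a_u}u}_{a_u(\Omega)}^{-2}$ and compare squared norms across the two metrics, which forces you to estimate the extra metric-change term $\beta\int_\Omega\left(|u^*|^2-|u|^2\right)|\calG_{a_u}u|^2$ by hand and to invoke Lemma~\ref{lem:stab_au} to bound $\norm{\calG_{a_u}u}_{a_u(\Omega)}$ away from zero. Both routes are sound and yield the same kind of constant; what the paper's identity buys is that the perturbed quantity is metric-free, so Lemma~\ref{lem:stab_au} is not needed in this lemma at all (the paper reserves it for the proof of Theorem~\ref{thm:local_conv_au} itself). Your handling of the extra term (H\"older, \eqref{embed_L4}, the bound $\norm{\calG_{a_u}u}_{H_0^1(\Omega)}\leq C_3$, then Lemma~\ref{lem:equiv_au_H1}) is exactly right, so the cost is only a few extra lines of bookkeeping.
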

	
	\begin{proof}
		Denote
		\begin{equation*}
			\gamma = \frac{1}{\norm{\calG_{a_u} u }_{a_u(\Omega)}^2} = \frac{1}{\left(\calG_{a_u} u, u\right)_{L^2(\Omega)}},
		\end{equation*}
		and
		\begin{equation*}
			\gamma^* = \frac{1}{\norm{\calG_{a_{u^*}} u^* }_{a_{u^*}(\Omega)}^2} = \frac{1}{\left(\calG_{a_{u^*}} u^*, u^*\right)_{L^2(\Omega)}}.
		\end{equation*}
		By \eqref{poincare_ineq}, \eqref{local_esti:7}, and Lemma~\ref{lem:Gau}, one has that
		\begin{align*}
			& \left|\left(\calG_{a_u} u, u\right)_{L^2(\Omega)} - \left(\calG_{a_{u^*}} u^*, u^*\right)_{L^2(\Omega)} \right| \\
			\leq & \left| \left(\calG_{a_u} u, u - u^*\right)_{L^2(\Omega)}\right| + \left| \left(\calG_{a_u} u - \calG_{a_{u^*}} u^*, u^*\right)_{L^2(\Omega)} \right| \\
			\leq & \norm{\calG_{a_u} u}_{L^2(\Omega)} \norm{u - u^*}_{L^2(\Omega)} + \norm{\calG_{a_u} u - \calG_{a_{u^*}} u^*}_{L^2(\Omega)} \norm{u^*}_{L^2(\Omega)}\\
			\leq & C_3^2 \norm{\calG_{a_u} u}_{H_0^1(\Omega)} \norm{u - u^*}_{a_{u^*}(\Omega)} + C_3 \norm{\calG_{a_u} u - \calG_{a_{u^*}} u^*}_{a_{u^*}(\Omega)} \\
			\leq & \left(C_3^3 + C_3 L_\calG\right) \norm{u - u^*}_{a_{u^*}(\Omega)},
		\end{align*}
		and hence that
		\begin{equation*}
			|\gamma_n - \gamma^*|  = \frac{\left|\left(\calG_{a_u} u, u\right)_{L^2(\Omega)} - \left(\calG_{a_{u^*}} u^*, u^*\right)_{L^2(\Omega)} \right|}{\left(\calG_{a_u} u, u\right)_{L^2(\Omega)} \left(\calG_{a_{u^*}} u^*, u^*\right)_{L^2(\Omega)}}\leq L_\gamma \norm{u - u^*}_{a_{u^*}(\Omega)},
		\end{equation*}
		for some constant $L_\gamma$ and sufficiently small $\norm{u - u^*}_{a_{u^*}(\Omega)}$. Then it holds that
		\begin{align*}
			&\norm{\nabla_{a_u}^{\mathcal{R}}E(u)}_{a_{u^*}(\Omega)} \\
			= & \norm{\nabla_{a_u}^{\mathcal{R}}E(u) - \nabla_{a_{u^*}}^{\mathcal{R}}E(u^*)}_{a_{u^*}(\Omega)}= \left\|u  - \gamma \calG_{a_u} u  - u^* + \gamma^* \calG_{a_{u^*}} u^*\right\|_{a_{u^*}(\Omega)}\\
			\leq & \norm{u - u^*}_{a_{u^*}(\Omega)} + |\gamma - \gamma^*|\cdot \norm{\calG_{a_u} u}_{a_{u^*}(\Omega)} + \gamma^* \norm{\calG_{a_u} u - \calG_{a_{u^*}} u^*}_{a_{u^*}} \\
			\leq & (1 + L_\gamma C_{a_{u^*}} C_3 + \gamma^* L_\calG) \norm{u - u^*}_{a_{u^*}(\Omega)},
		\end{align*}
		where we used \eqref{local_esti:7}, Lemma~\ref{lem:equiv_au_H1}, and Lemma~\ref{lem:Gau}.
	\end{proof}

	\begin{proof}[Proof of Theorem~\ref{thm:local_conv_au}]
		Denote
		\begin{equation*}
			e_n = u^* - u_n,\quad \delta_n = \norm{e_n}_{a_{u^*}(\Omega)}, \quad\text{and}\quad \gamma_n = \frac{1}{\left(\calG_{a_{u_n}} u_n, u_n\right)_{L^2(\Omega)}}.
		\end{equation*}
		We assume that $u_n$ is close enough in $\norm{\cdot}_{H_0^1(\Omega)}$ to $u^*$ so that the results in Lemma~\ref{lem:stab_au} and Lemma~\ref{lem:Lg_au} are true with sufficiently small $\epsilon$ satisfying
		\begin{equation*}
			\max_{\alpha\in[\alpha_{\min},\alpha_{\max}]}(1+\epsilon)^2 \left((1+\epsilon)^2(1+ L_g^2 \alpha^2) - \alpha \min\left\{1,\frac{\lambda_1 - \lambda_0}{4\lambda_0}\right\} \right) < 1.
		\end{equation*}
		It can be computed that
		\begin{align*}
			& \norm{(u_n - u^*) - \alpha_n \nabla_{a_{u_n}}^{\mathcal{R}} E(u_n)}_{a_{u_n}(\Omega)}^2 \\
			\leq & \norm{u_n - u^*}_{a_{u_n}(\Omega)}^2 - 2\alpha_n \left(u_n - u^*, \nabla_{a_{u_n}}^{\mathcal{R}}E(u_n)\right)_{a_{u_n}(\Omega)}  +\alpha_n^2 \norm{\nabla_{a_{u_n}}^{\mathcal{R}}E(u_n)}_{a_{u_n}(\Omega)}^2 \\
			\leq & (1+\epsilon)^2(1+ L_g^2 \alpha_n^2 )\delta_n^2  + 2\alpha_n (e_n, \nabla_{a_{u_n}}^{\mathcal{R}}E(u_n))_{a_{u_n}}\\
			= & (1+\epsilon)^2 (1+ L_g^2 \alpha_n^2 )\delta_n^2  + 2\alpha_n (e_n, u_n )_{a_{u_n}(\Omega)}  - 2\alpha_n\gamma_n \left(e_n, \calG_{a_{u_n}} u_n \right)_{a_{u_n}(\Omega)}.
		\end{align*}
		By some computations similar to those in the proof of Theorem~\ref{thm:local_conv_H1}, it holds for sufficiently small $\delta_n$ that
		\begin{multline*}
			\norm{(u_n - u^*) - \alpha_n \nabla_{a_{u_n}}^{\mathcal{R}}E(u_n)}_{a_{u_n}(\Omega)}^2 \\ \leq \left((1+\epsilon)^2(1+ L_g^2 \alpha_n^2) - \alpha_n \min\left\{1,\frac{\lambda_1 - \lambda_0}{4\lambda_0}\right\}  + \frac{\alpha_n \beta C_1^4}{2} \delta_n^2\right)\delta_n^2,
		\end{multline*}
		and hence that
		\begin{multline*}
			\norm{(u_n - u^*) - \alpha_n \nabla_{a_{u_n}}^{\mathcal{R}}E(u_n)}_{a_{u^*}(\Omega)} \\ \leq (1+\epsilon)\left((1+\epsilon)^2(1+ L_g^2 \alpha_n^2) - \alpha_n \min\left\{1,\frac{\lambda_1 - \lambda_0}{4\lambda_0}\right\}  + \frac{\alpha_n \beta C_1^4}{2} \delta_n^2\right)^{1/2}\delta_n.
		\end{multline*}
		Note that
		\begin{equation*}
			u_{n+1} = R\left(u_n - \alpha_n \nabla_{a_{u_n}}^{\mathcal{R}}E(u_n)\right) = \left(u_n - \alpha_n \nabla_{a_{u_n}}^{\mathcal{R}}E(u_n)\right) + R_n, 
		\end{equation*}
		where 
		\begin{equation*}
			R_n = \left(u_n - \alpha_n \nabla_{a_{u_n}}^{\mathcal{R}}E(u_n)\right) - R\left(u_n - \alpha_n \nabla_{a_{u_n}}^{\mathcal{R}}E(u_n)\right),
		\end{equation*} 
		can be estimated similar to Lemma \ref{lem:esti_retraction} that
		\begin{align*}
			\norm{R_n}_{a_{u^*}(\Omega)} & \leq \frac{\alpha_n^2}{2}\norm{\nabla_{a_{u_n}}^{\mathcal{R}}E(u_n)}_{L^2(\Omega)}^2 \norm{u_n - \alpha_n \nabla_{a_{u_n}}^{\mathcal{R}}E(u_n)}_{a_{u^*}(\Omega)} \\
			& \leq \frac{\alpha_n^2}{2} L_g^2\delta_n^2 (C_{a_{u^*}} C_u + \alpha_n L_g \delta_n).
		\end{align*}
		Then we can conclude the locally exponential convergence rate via
		\begin{align*}
			\delta_{n+1} \leq & \norm{(u_n - u^*) - \alpha_n \nabla_{a_{u_n}}^{\mathcal{R}}E(u_n)}_{a_{u^*}} + \norm{R_n}_{a_{u^*}} \\
			\leq &  (1+\epsilon)\left((1+\epsilon)^2(1+ L_g^2 \alpha_n^2) - \alpha_n \min\left\{1,\frac{\lambda_2 - \lambda_1}{4\lambda^*}\right\}  + \frac{\alpha_n \beta C_1^4}{2} \delta_n^2\right)^{1/2}\delta_n  \\
			&\qquad\qquad + \frac{\alpha_n^2}{2} L_g^2\delta_n^2 (C_{a_{u^*}} C_u + \alpha_n L_g \delta_n)\\
			\leq & C_\delta \delta_n,
		\end{align*}
		where $C_\delta\in(0,1)$ and $\delta_n$ is sufficiently small.
	\end{proof}
\end{document}